\theoremstyle{plain}
\newtheorem{theorem}{Theorem}
\newtheorem{prop}[theorem]{Proposition}
\newtheorem{lemma}[theorem]{Lemma}
\theoremstyle{definition}
\newtheorem{definition}[theorem]{Definition}
\theoremstyle{remark}
\newtheorem{rem}[theorem]{Remark}
\newtheorem{example}[theorem]{Example}
\numberwithin{theorem}{section}
\numberwithin{equation}{section}
\newcommand\Z{\mathbb{Z}}
\newcommand\R{\mathbb{R}}
\newcommand\T{\mathbb{T}}
\newcommand\vhi{\varphi}
\newcommand\eps{\varepsilon}
\newcommand\bsym[1]{\boldsymbol{#1}}
\newcommand\del{\partial}
\renewcommand\bar[1]{\overline{#1}}
\newcommand\argmin[2]{\underset{#1}{\mathrm{argmin}}\,{#2}}
\definecolor{deeppink}{HTML}{ff1493}
\definecolor{royalblue}{HTML}{4169e1}
\definecolor{gainsboro}{HTML}{dcdcdc}
\title{
\emph{On the Generalized Conditional Gradient Method for Mean Field Games with Local Coupling Terms}
}
\author{
Haruka Nakamura\thanks{Graduate School of Mathematical Sciences, The University of Tokyo, Komaba 3-8-1, Meguro-ku, Tokyo 153-8914, Japan. \texttt{nakamura-haruka@g.ecc.u-tokyo.ac.jp}}
 \quad and \quad 
 Norikazu Saito\thanks{Graduate School of Mathematical Sciences, The University of Tokyo, Komaba 3-8-1, Meguro-ku, Tokyo 153-8914, Japan. \texttt{norikazu@g.ecc.u-tokyo.ac.jp}}
}
\date{\today}
\begin{document}

\maketitle


\begin{abstract}
We study the generalized conditional gradient (GCG) method for time-dependent second-order mean field games (MFG) with local coupling terms. While explicit convergence rates of the GCG method were previously established only for globally coupled interactions, the assumptions used there fail to cover typical local interactions such as congestion effects. To overcome this limitation, we introduce a refined analytical framework adapted to local couplings and derive explicit convergence estimates in terms of the exploitability and optimality gap. The key difficulty lies in establishing uniform bounds on the Hamilton--Jacobi--Bellman solutions; this is solved via the Cole--Hopf transformation under a standard quadratic Hamiltonian with a convection effect. We further provide numerical experiments demonstrating convergence behavior and confirming the theoretical rates. Additionally, the existence and uniqueness of smooth solutions to the MFG system with locally coupled interactions are established. %
\end{abstract}

\bigskip

\noindent \textbf{Key-words:} mean field games, generalized conditional gradient, convergence rate, local coupling.

\bigskip

\noindent \textbf{AMS classification (2020):} 90C52, 91A16, 91A26, 91B06, 49K20, 35F21, 35Q91.

\section{Introduction}
\label{sec:intro}

The Mean Field Games (MFG) system provides a mathematical framework for modeling optimization problems in which a very large number of agents interact with each other.
It was independently introduced in 2006 by 
Lasry and Lions \cite{ll06a,ll06b,LL07} and by Huang et al. \cite{H06}, as is well known.
The MFG system has been widely applied in fields that study large-scale collective behavior from a macroscopic perspective; see 
\cite{MR3752669,MR3753660,lauriere2024,10508221} for more detail.

This paper is motivated by the work of Lavigne and Pfeiffer \cite{LP23}.
In their paper, they studied the \emph{Generalized Conditional Gradient} (GCG) method for time-dependent second-order MFG and, for the first time, established explicit convergence rates.
Their analysis focuses on \emph{globally} coupled interactions in the spatial variable, and this assumption plays an essential role at several key steps in the proofs of their main theorems.
The purpose of the present paper is, in a nutshell, to extend their results to the case of \emph{locally} coupled interactions. To explain this more concretely, we now introduce our problem. 
Letting $Q := (0, T) \times \T^d$ with the $d$-dimensional torus $\mathbb{T}^d := (\mathbb{R}/\mathbb{Z})^d$ and $T > 0$, we consider the MFG system of the form:  
\begin{subequations} 
\label{MFG_eq}
\begin{alignat}{3}
 -\del_t u - \nu \Delta{u} + H(t, x, \nabla{u}) &= f(t,x,m) &\quad& \mbox{in } Q, \label{MFG_eq1}\\ 
 u(T, x) &= g(x) && \mbox{on } \T^d,\label{MFG_eq2}\\
\del_t m - \nu \Delta{m} - \nabla \cdot (m \nabla_p H(t, x, \nabla u)) &= 0 && \mbox{in } Q, \label{MFG_eq3}\\ 
m(0, x) &= m_0(x) && \mbox{on } \T^d,\label{MFG_eq4}
\end{alignat}
\end{subequations}
where $\nu > 0$ is a given constant.  The unknown functions are $u:Q\to \mathbb{R}$ and $m:Q\to \mathbb{R}$, which we call the value and density functions, respectively. The symbols \(\Delta\), \(\nabla\), and \(\nabla\!\cdot\) denote the Laplacian, gradient, and divergence operators in \(\mathbb{R}^d\), respectively. 
We are given $g : \T^d \to \R$, 
$m_0 : \T^d \to \R$, and 
$f: Q \times \mathcal{D}_1(\T^d) \to \R$, and they are called the terminal condition, initial condition and coupling term, respectively. The space  $\mathcal{D}_1(\T^d)$ denotes a set of density distributions on $\T^d$ which will be defined in \eqref{eq:d1} below. We interpret the right-hand side of \eqref{MFG_eq1} as \(f(t,\cdot,m(t,\cdot))\), and, when no confusion arises, write it simply as \(f(t,x,m)\) or \(f(\cdot,\cdot,m)\). 
The function $H : Q \times \R^d \to \R$ is called the Hamiltonian and is usually defined as the Legendre transformation of the running cost $L: Q \times \R^d \to \R$, 
\begin{equation}
\label{eq:r-cost}
H(t, x, p) := L^{\ast}(t, x, -p) = \sup_{v \in \R^d} \left[-p\cdot v - L(t, x, v)\right],
\quad (x,t,p)\in Q\times \R^d.
\end{equation}
We assume that $H$ is differentiable with respect to its third variable $p$, and denote its gradient by $\nabla_p H(t,x,p)$. The coupling term $f$ can describe the congestion cost in the dynamics (see Section \ref{sec_numerical_experiment} for the details on the effects). 
At this stage, it is useful to introduce the form of the coupling terms that we aim to study in this paper.
Namely, we consider coupling terms that represent local interactions, as in the example below: 
\begin{equation}
\label{eq:ex-f}
f(t, x, m) = \varphi(m(t,x)) ,
\end{equation}
where $\varphi:\mathbb{R}_{\ge 0}\to\mathbb{R}$ denotes a continuous function such as 
$\varphi(s)=s^\alpha$ and $\varphi(s)=\min\{s^\alpha,\beta\}$ with $\alpha\ge 1$ and $\beta>0$ for $s\ge 0$.
This is only a prototypical example, not an exhaustive list.
The precise assumptions imposed on $f$ considered in our work will be stated in 
\textup{(f-B)} 
\textup{(f-L)} 
\textup{(f-M)}, and \textup{(f-P)} 
below. Such local coupling terms are of significant practical relevance. As a concrete example, Bonnemain et al. \cite{B23} proposed an MFG model with $f(t,x,m)=m(t,x)$ to describe the response of a crowd to an intruder moving through a static group.
They demonstrated that MFG successfully capture rational lateral avoidance behavior of the crowd, which cannot be reproduced by traditional mathematical models.
Furthermore, in many studies on the MFG system, numerical experiments are often performed using local coupling terms; see \cite{ach13, A12, A13, AC10, C21, G12, I23, MR4596353, os25} for example.  

The ultimate goal of our research is to develop efficient and practical numerical methods for the MFG system and to establish their convergence properties. However, introducing a discretization alone is not sufficient for obtaining a practical numerical solver for the MFG system. Thus, unlike initial value problems, it cannot be solved by marching forward in time; instead, the entire space–time domain must be handled simultaneously.
Even in two spatial dimensions, the number of degrees of freedom in the discretized finite-dimensional system becomes extremely large, requiring high-performance computing resources and long computational times.
To address this issue, it is often useful to introduce an iterative-decoupling reformulation before discretization.
More precisely, we decouple the Hamilton--Jacobi--Bellman (HJB) equation \eqref{MFG_eq1}--\eqref{MFG_eq2} and the Fokker–Planck (FP) equation \eqref{MFG_eq3}--\eqref{MFG_eq4}, and solve them alternately in an iterative manner to compute a solution of the MFG system. The policy iteration \cite{C21,CT22} and monotone iteration \cite{G12} are typical examples. 
In this paper, we focus in particular on the following iterative scheme, known as the GCG method: starting with the initial guess $\bar{m}_0$, we generate the sequences ${u}_{k}$, ${m}_{k}$, $\gamma_k$ and $\bar{m}_{k+1}$ for $k=0,1,\ldots$ by 
\begin{subequations} 
\label{GCG}
\begin{alignat}{3}
\gamma_k(t,x) &= f(t,x,\bar{m}_k) &\quad& \mbox{in } Q, \label{GCGg}\\ 
 -\del_t u_k - \nu \Delta{u}_k + H(t, x, \nabla{u}_k) &= \gamma_k(t,x) &\quad& \mbox{in } Q, \label{GCGa}\\ 
 u_k(T, x) &= g(x) && \mbox{on } \T^d,\label{GCGb} \\
\del_t m_k - \nu \Delta{m}_k - \nabla \cdot (m_k \nabla_p H(t, x, \nabla u_k)) &= 0 && \mbox{in } Q, \label{GCGc}\\ 
m_k(0, x) &= m_0(x) && \mbox{on } \T^d, \label{GCGd}
\end{alignat}
and $\bar{m}_{k+1}$ is updated by 
\begin{equation}
\bar{m}_{k+1}=(1-\delta_k) \bar{m}_k+\delta_k m_k\quad \mbox{in }Q,
\label{GCGf}
\end{equation}
\end{subequations}
where $\delta_k\in (0,1]$ is a suitably chosen parameter. For example, if $\delta_k$ is chosen as $\delta_k = 1/(k+1)$, the updating rule is rewritten as 
\begin{equation*}
\bar{m}_{k+1} = \frac{1}{k+1} \sum_{j=0}^{k} m_j.
\end{equation*}
In this case, the GCG method \eqref{GCG} is referred to as the \emph{fictitious play} iterative method studied in \cite{CH17} and, therefore, the GCG method is interpreted as a generalization of the fictitious-play method. Moreover, several alternative choices of $\delta_k$ may be employed to accelerate convergence.
Details will be presented in Subsection~\ref{sec:step}. In the GCG method, we first solve the terminal value problem \eqref{GCGg}-\eqref{GCGb} of the HJB equation in $Q$ using a given density distribution $\bar{m}_k$. Then, using the updated solution $u_k$ of the HJB equation, we solve the initial value problem \eqref{GCGc}-\eqref{GCGd} of the FP equation in $Q$ to update the density distributions $m_k$, and get $\bar{m}_{k+1}$ by \eqref{GCGf}.
This procedure is repeated until a solution of the MFG system is obtained. Thus, the computational cost remains large.
However, if the convergence is sufficiently fast, it can be more efficient to a few $d$-dimensional terminal and initial value problems rather than solving a $(d+1)$-dimensional problem at once.
Therefore, studying the convergence of the iteration in detail and proposing advantageous choices of $\delta_k$ is of practical significance.

In this context, Lavigne and Pfeiffer \cite{LP23} focused on the \emph{potential} MFG and succeeded in deriving explicit convergence rates of the GCG method in terms of the optimality gap $\eps_k$ (see Definition \ref{def:gap}) and the exploitability $\sigma_k$  (see Definition \ref{def:ex}).
They further proved that, depending on the choice of $\delta_k$, the optimality gap decays either exponentially or polynomially. Taken together, these results reveal almost all the essential aspects of the convergence behavior of the GCG method.
Here, the potential MFG refer to cases in which the interaction term admits a potential (see \textup{(f-P)} and Remark  \ref{rem:potential}).
In \cite{LP23}, the authors considered a very general (but smooth) Hamiltonian $H$, and for $f$ they assumed the following condition: 
\begin{equation} 
\label{coupling_term_LP23}
|f(t_2, x_2, m_2) - f(t_1, x_1, m_1)| \le C (|t_2-t_1|^{\alpha_0}+|x_2-x_1|) + L_f \|m_2-m_1\|_{L^2(\T^d)},
\end{equation}
where $C > 0$, $\alpha_0\in (0,1)$, $L_f > 0$ are suitable constants, $(t_1,x_1),(t_2,x_2)\in Q$, and $m_1,m_2\in\mathcal{D}_1(\mathbb{T}^d)$. 
However, this condition does not cover the locally coupled interactions \eqref{eq:ex-f} considered in the present work. On the other hand, Bonnans et al. \cite{B21} proved the existence and uniqueness of smooth solutions for the MFG system under a condition that  
\begin{equation} 
\label{eq:local0}
|f(t, x_2, m_2) - f(t, x_1, m_1)| \le C \left(|t_2-t_1|^{\alpha_0}+|x_2-x_1| + \|m_2-m_1\|_{L^\infty(\T^d)}^{\alpha_0}\right).
\end{equation}
Unfortunately, the local coupling terms we are interested in do not satisfy this condition either.
In view of these limitations, we consider coupling terms that satisfy the condition \textup{(f-L)} presented in Subsection \ref{sec:assumptions}. 
In fact,  
the interaction term $f$ given as \eqref{eq:ex-f} satisfies the condition \textup{(f-L)}, if $\varphi$ is a Lipschitz continuous function by itself. The objective of this work is to reconstruct the convergence analysis of \cite{LP23} by replacing the assumption on $f$ in \eqref{coupling_term_LP23} with the two assumptions introduced above.
However, this approach faces a major difficulty.
In the analysis, the uniform boundedness (i.e., boundedness independent of $k$) of the sequences generated by the GCG method plays a crucial role.
Under our assumptions, establishing such a uniform bound, in particular for 
$\|\nabla u_k\|_{L^\infty(Q;\mathbb{R}^d)}$, is highly nontrivial.
To overcome this difficulty, we restrict the Hamiltonian to the most standard form:
\begin{equation}
\label{eq:quadH}
H(t, x, p) = \frac{1}{2}|p|^2 - h(t, x) \cdot p,\qquad (t,x,p)\in Q\times \mathbb{R}^d
\end{equation}
with a given function $h:Q\to\mathbb{R}^d$.  ($|\cdot|$ and $\cdot$ denote the Euclid norm and inner-product in $\R^d$, respectively.) 
In this case, we can utilize the Cole--Hopf transformation $\phi=\exp(-u/2\nu)$ and rewrite the HJB equation as a reaction-diffusion equation for solving $\phi$. Then, thanks to parabolic regularity estimates, we are able to prove that $\|\nabla u_k\|_{L^\infty(Q;\mathbb{R}^d)}$ is uniformly bounded. This is the only reason that we impose such a specific structure on the Hamiltonian. Anyaway, within the settings, we are able to achieve our first objective.

We note that in \cite{LP23}, the authors refer to $f$ in \eqref{MFG_eq} as the first coupling term, and they additionally incorporate the effect of a price function, which they call the second coupling term. Since our interest in this paper lies in locally coupled interactions, we do not consider price interactions. See also Section \ref{sec:cr}, Item 4. 

Our second objective is to investigate the convergence behavior of the GCG method under local coupling term through various numerical experiments. Although numerical results were also reported in \cite{LP23}, they only considered cases without the effect of the first coupling term in their sense.
We discretize the GCG method by using the finite difference method proposed in \cite{I23}, and perform numerical computations.
We then evaluate, under locally coupled interactions, the relationship between the choice of $\delta_k$ and computational efficiency, as well as the validity of the theoretically established convergence rates. 
We do not, however, analyze the discretization error in this paper, as such a study would require additional technical developments. We therefore avoid addressing this issue here, and will provide a detailed investigation in a separate work \cite{NS26}.

Our third objective is to establish the existence and uniqueness of smooth solutions for the MFG system \eqref{MFG_eq} under the above problem setting.  This task also involves nontrivial difficulties. In particular, as in the convergence analysis of the GCG method, deriving a priori bounds for $\|\nabla u\|_{L^\infty(Q;\mathbb{R}^d)}$ becomes a major obstacle.
Although \cite{G16} has addressed this issue, the setting considered there is more restrictive than ours; namely, $H(t, x, p) = |p|^2/2$ and $f(t, x, m) = m(t,x)^{\alpha}$, where $\alpha$ must be taken sufficiently small depending on $d$. 
To the best of our knowledge, very limited progress has been made on the regularity theory of the MFG systems with locally coupled interactions (see, e.g., \cite{P14} for the existence of weak solutions). We overcome this difficulty by applying the Cole--Hopf transformation to convert the HJB equation into a reaction–diffusion equation, and then invoking parabolic regularity theory.

We summarize the contents of this paper below: 
\begin{itemize}
\item In Section \ref{sec:settings}, we precisely describe the problem settings considered in this paper.
In particular, we introduce the function spaces employed in Subsection \ref{sec:fs}, and state our assumptions in Subsection \ref{sec:assumptions}.
\item Our existence and uniqueness result for smooth solutions to the MFG system \eqref{MFG_eq} appears in Theorem \ref{thm_MFG_regularity} of Subsection \ref{sec:mfg}, and its proof is presented in Section \ref{sec_MFG_regularity}.
The well-posedness of the GCG method is established in Theorem \ref{LP23_prop23} of Subsection \ref{sec:mfg} and the proof is given in Section \ref{sec:proof-wp}. 

\item In Subsection \ref{GCG_for_MFG}, we recall variational reformulations of potential MFG and the GCG method, that is, formulations as optimization problems.
After discussing the step-size selection strategies in Subsection \ref{sec:step}, we state our convergence results for the GCG method in Subsection \ref{sec:convergence}, namely Theorems \ref{LP23_thm7}, \ref{LP23_thm8-1}, and \ref{LP23_thm8-2}.
Their proofs are given in Section \ref{sec:LP23_lem6} and Section \ref{sec:proof-rate}.

\item In Section \ref{sec_numerical_experiment}, we investigate, through numerical experiments, the relationship between the choice of $\delta_k$ and computational efficiency, as well as the validity of the theoretically predicted convergence rates under locally coupled interactions.

\item Finally, in Section~\ref{sec:cr}, we summarize our findings and provide several concluding remarks.
\end{itemize}

This paper draws significantly from \cite{LP23}.
Results that can be directly cited from that paper (or other references), or used with only minor modifications, are stated as \emph{Lemmas}. On the other hand, \emph{Theorems} and \emph{Propositions} refer to results whose proofs rely on new ideas developed in the present work.

\section{Problem settings}
\label{sec:settings}

\subsection{Function Spaces}
\label{sec:fs}

Following \cite{LP23}, 
we use the function spaces and norms presented below. Let $X$ denote $\T^d$, $Q$ or $(0,T)$. We write $\del_i = \del_{x_i}$, $\del_i\del_j=\del_{x_i}\del_{x_j}$ and so on.
\begin{itemize}
\item For any $k \in \Z_{\ge 0}$, $\mathcal{C}^k(X; \R^d)$ denotes the space of functions of class $\mathcal{C}^k$ on $X$ valued in $\R^d$. We simply write $\mathcal{C} := \mathcal{C}^0$ and $\mathcal{C}^k(X) := \mathcal{C}^k(X;\R)$.
\item For any $\alpha \in (0, 1)$, the set of functions whose derivatives up to order $k$ are H\"{o}lder continuous of order $\alpha$ is denoted by $\mathcal{C}^{k + \alpha}(X;\R^d)$. 
The function $u$ of $\mathcal{C}(Q)$ belongs to $\mathcal{C}^{\frac{\alpha}{2}, \alpha}(Q)$ if and only if $u(\cdot, x) \in \mathcal{C}^{\frac{\alpha}{2}}(0, T)$ and $u(t, \cdot) \in \mathcal{C}^{\alpha}(\T^d)$. If $\del_t u, \del_i u, \del_i \del_j u \in \mathcal{C}^{\frac{\alpha}{2}, \alpha}(Q)$ ($1 \le i, j \le d$), we write as $u\in \mathcal{C}^{1+\frac{\alpha}{2}, 2+\alpha}(Q)$. 

\item For any $p \in [1, \infty]$, $L^p(X)$ and $W^{k, p}(X)$ denote the standard Lebesgue and Sobolev spaces, respectively. 
We use the standard Bochner space $L^p(0,T;W^{k,p}(\mathbb{T}^d))$. 
\item Throughout this paper, we fix a constant $q\in\mathbb{R}$ satisfying 
\begin{equation*}
q > d+2.
\end{equation*}
Then, set 
$$
W^{1, 2, q}(Q) := W^{1, q}(Q) \cap L^q(0, T; W^{2, q}(\T^d)),
$$
and 
$$
\|u\|_{W^{1, 2, q}(Q)} := \|u\|_{W^{1, q}(Q)} + \|u\|_{L^q(0, T; W^{2, q}(\T^d))} \qquad (u \in W^{1, 2, q}(Q)).
$$
\item The space of control inputs in the FP equation is defined as
$$
\Theta := \left\{v \in \mathcal{C}(Q; \R^d) \mid D_x v \in L^q(Q; \R^{d \times d})\right\}
$$
equipped with the norm
$$
\|v\|_{\Theta} := \|v\|_{L^{\infty}(Q: \R^d)} + \|D_x v\|_{L^q(Q; \R^{d \times d})} \qquad (v \in \Theta),
$$
where $D_x v = (\del_j v_i)_{1 \le i, j \le d} \in \R^{d \times d}$ is the Jacobi matrix of $v = (v_1, \dots, v_d)^{\top}$.
\item Define the space of coupling terms as
$$
\Gamma := L^{\infty}(0, T; W^{1, \infty}(\T^d))\cap \mathcal{C}(Q) \qquad 
$$
equipped with the norm
$$
\|\gamma\|_{\Gamma} := \|\gamma\|_{L^{\infty}(0, T; W^{1, \infty}(\T^d))} \qquad (\gamma \in \Gamma).
$$
\item The set of density distributions on $\T^d$ is defined as
\begin{equation}
\label{eq:d1}
\mathcal{D}_1(\T^d) := \left\{m \in \mathcal{C}(\T^d) \mid  m \ge 0, \int_{\T^d} m(x)\, dx = 1\right\}.
\end{equation}
\item The following space will be used as a feasible set $\mathcal{R}$ of the variational problems introduced later: 
\begin{equation}
\label{eq:spaceR}
\mathcal{R} := \left\{(m, w) \in W^{1, 2, q}(Q) \times \Theta \mid  \begin{matrix}
\del_t m - \nu\Delta{m} + \nabla \cdot w = 0, \quad m(0) = m_0 \\
\exists v \in L^{\infty}(Q; \R^d) \text{ s.t. } w = mv
\end{matrix}\right\}.
\end{equation}
As will be stated in Lemma \ref{LP23_lem5} below, the space $\mathcal{R}$ is a convex set.  
\end{itemize}

\begin{rem}
In order to treat local coupling terms, a density distribuion $m$ must be defined at every point of $\T^d$. That is why we define $\mathcal{D}_1(\T^d)$ as the subset of $\mathcal{C}(\T^d)$.
\end{rem}

\begin{rem} 
\label{B21_lem12}
We recall a well-known Sobolev's type inequality. There exist constants $\delta\in (0,1)$ and $C>0$ such that
$$
\|v\|_{\mathcal{C}^{\delta}(Q)} + \|\nabla{v}\|_{\mathcal{C}^{\delta}(Q; \R^d)} \le C \|v\|_{W^{1, 2, q}(Q)}
$$
for any $v \in W^{1, 2, q}(Q)$. See \cite[pages 80 and 342]{lsu68} or \cite[Lemma 12]{B21} for example.
\end{rem}


Finally, unless otherwise stated, the letter $C$ denotes a generic positive constant independent of the iteration number $k$. Whenever we need to emphasize that the constant $C$ depends on the parameter $R$, we denote it by $C(R)$.

\subsection{Assumptions}
\label{sec:assumptions}

We make the following assumptions on $H$, $f$, $g$ and $m_0$. 

\begin{description} \label{assum-L}
\item[(H)] The Hamiltonian $H$ is given by \eqref{eq:quadH} where $h \in \mathcal{C}^{1+\alpha_0}(Q; \R^d)$ with a constant $\alpha_0\in (0,1)$. 

\item[(f-B)] The coupling term $f$ is bounded. That is, there exists a constant $C_0$ satisfying:
\label{eq:f-123}
\begin{equation}
|f(t, x, m)| \le C_0\label{eq:f-1}
\end{equation}
for $(t, x)\in Q$, and $m\in \mathcal{D}_1(\T^d)$. 

\item[(f-L)] The coupling term $f$ is Lipschitz continuous. That is, there exist constants $C_0$, $L_f > 0$ and $\alpha_0 \in (0, 1)$ satisfying: 
\begin{subequations}
\begin{equation}
|f(t_2, x, m_2) - f(t_1, x, m_1)| \le C_0 |t_2-t_1|^{\alpha_0} + L_f \|m_2 - m_1\|_{L^{\infty}(\T^d)}
\label{eq:f-2}
\end{equation}
for $(t, x),(t_1, x_1),(t_2, x_2) \in Q$, and $m,m_1,m_2 \in \mathcal{D}_1(\T^d)$. 
Furthermore, if $m \in \mathcal{D}_1(\T)$ is a Lipschitz continuous function (of $x$) with the Lipschitz constant $\operatorname{Lip}_x(m)$, then $f(t,\cdot,m)$ is also a Lipschitz continuous function of $x$ with the Lipschitz constant $\operatorname{Lip}_x(m;f)$: 
\begin{equation}
\label{eq:f-3}
|f(t, x_2, m) - f(t, x_1, m)| \le \operatorname{Lip}_x(m;f) |x_2 - x_1|
\end{equation}
\end{subequations}
for $(t, x_1),(t, x_2) \in Q$. Moreover, if $\operatorname{Lip}_x(m)\le R$ for some $R>0$, there is a constant $C(R)$ depending on $R$ such that $\operatorname{Lip}_x(m;f)\le C(R)$. 

\item[(f-M)] The coupling term $f$ is monotone with respect to the third argument. That is, 
\begin{equation}
\label{eq:f-4}
\int_{\T^d} \left[f(t, x, m_2) - f(t, x, m_1)\right][m_2(x) - m_1(x)]\, dx \ge 0
\end{equation}
for $t \in [0, T]$ and $m_1,m_2 \in \mathcal{D}_1(\T^d)$. 
\item[(f-P)] The coupling term $f$ has a potential. That is, there exists a continuous function $F : [0, T] \times \mathcal{D}_1(\T^d) \to \R$ such that
\begin{equation}
\label{eq:f-5}
F(t, m_2) - F(t, m_1) = \int_0^1 \int_{\T^d} f(t, x, sm_2 + (1-s)m_1)[m_2(x)-m_1(x)]~ dxds.
\end{equation}
\item[(TIV)] $g \in \mathcal{C}^{2 + \alpha_0}(\T^d)$ and $m_0 \in \mathcal{D}_1(\T^d) \cap \mathcal{C}^{2 + \alpha_0}(\T^d)$ with with a constant $\alpha_0\in (0,1)$. 
\end{description}

\begin{rem}
    \label{rem:potential}
If \textup{(f-P)} is fulfilled, the MFG \eqref{MFG_eq} is called the potential MFG. 
\end{rem}

\begin{rem}
\label{rem:constants}
The constants $\alpha_0$ appearing in \textup{(H)}, \textup{(f-L)} and \textup{(TIV)} are assumed to take the same value. 
The constants $C_0$ in \textup{(f-B)} and \textup{(f-L)} are assumed to take the same value too. 
\end{rem}

\begin{rem}
\label{rem:unique}
The monotonicity \textup{(f-M)} of $f$
 is assumed only to guarantee the uniqueness of solutions to the MFG system; see the proof of Theorem \ref{thm_MFG_regularity} of Section \ref{sec_MFG_regularity}. This condition is not used for any other purpose. In particular, the condition \textup{(f-M)} does not enter into the proofs of the convergence results, Theorems \ref{LP23_thm7}, \ref{LP23_thm8-1}, and \ref{LP23_thm8-2}. 
\end{rem}

\begin{rem}
\label{prop:L_and_H}
Under the condition \textup{(H)}, the running cost $L$ in \eqref{eq:r-cost} turns out to be  
\begin{equation*}
L(t,x,v) = \frac{1}{2} |v-h(t,x)|^2, \qquad (t,x,v)\in Q\times \mathbb{R}^d. 
\end{equation*}
The function $L=L(t,x,v)$ is differentiable with respect to $v$, and its gradient $\nabla_v L$ is differentiable with respect to both $x$ and $v$. Moreover, $L, \nabla_v L, D_x (\nabla_v L)$, and $D_v (\nabla_v L)$ are H\"{o}lder continuous on any bounded subset of $Q$. Furthermore, 
there exists a constant $C>0$ satisfying 
\begin{align*}
L(t, x, v) & \le C(1 + |v|^2), \\
|L(t, x_2, v) - L(t, x_1, v)| &\le C |x_2-x_1| (1+|v|^2),\\
L(t, x, v_2) &\ge L(t, x, v_1) + \nabla_v L(t, x, v_1) \cdot (v_2 - v_1) + \frac{1}{2C} |v_2 - v_1|^2 
\end{align*}
for $(t, x) ,(t, x_1) ,(t, x_2) \in Q$, and $v,v_1,v_2 \in \R^d$. 
\end{rem}

\subsection{The well-posedness of MFG}
\label{sec:mfg}

In \cite[Theorem1]{B21} and its proof, the authors proved the following. 
If the Lipschitz continuity assumption \textup{(f-L)} is replaced by \eqref{eq:local0}, the MFG system \eqref{MFG_eq} admits a unique solution $(\bar{u},\bar{m})\in W^{1,2,q}(Q)\times W^{1,2,q}(Q)$ and, moreover, there exists a constant $\alpha\in (0,1)$ such that  
\begin{subequations}
    \label{eq:classical}
    \begin{gather}
(\bar{m}, \bar{v}, \bar{u}, \bar{\gamma}) \in \mathcal{C}^{1+\frac{\alpha}{2}, 2 + \alpha}(Q) \times \mathcal{C}^{\alpha}(Q; \R^d) \times \mathcal{C}^{1+\frac{\alpha}{2}, 2+\alpha}(Q) \times \mathcal{C}^{\alpha}(Q),\\
D_x \bar{v} \in \mathcal{C}^{\alpha}(Q; \R^{d \times d}),
\end{gather}
\end{subequations}
where we have set
\begin{equation}
\label{eq:vw}
\bar{v}=-\nabla_p H(t,x,\nabla \bar{u}),\quad 
\bar{\gamma}=f(\cdot,\cdot,\bar{m}).
\end{equation}


However, as was mentioned in the Introduction, the local coupling terms such as \eqref{eq:ex-f} do not satisfy \eqref{eq:local0}  because $C_0$ must depend on $m$. 
Consequently, \cite[Theorem1]{B21} itself and that proof are not directly applied to our setting. 
Instead, for a particular $H$ given in \textup{(H)}, we can establish the following theorem by making a delicate use of parabolic regularity estimates. The proof of this theorem is deferred to Section \ref{sec_MFG_regularity}. 


\begin{theorem} \label{thm_MFG_regularity}
Suppose that 
\textup{(H)}, 
\textup{(f-B)}, 
\textup{(f-L)}, 
\textup{(f-M)}, and \textup{(TIV)} are satisfied. Then, the MFG system \eqref{MFG_eq} admits a unique solution $(\bar{u},\bar{m})\in W^{1,2,q}(Q)\times W^{1,2,q}(Q)$. Moreover, $\bar{v}$ and $\bar{\gamma}$ defined as \eqref{eq:vw} are, respectively, functions of $\Theta$ and of $\Gamma$. Furthermore, they are classical solutions in the sense that \eqref{eq:classical} holds true with some $\alpha\in (0,1)$. 
\end{theorem}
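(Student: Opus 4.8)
\medskip

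The plan is to prove existence via a fixed-point (Schauder) argument run on the map that sends a density $m$ to the density $\tilde m$ obtained by first solving the HJB equation \eqref{MFG_eq1}--\eqref{MFG_eq2} with right-hand side $f(\cdot,\cdot,m)$ and then solving the Fokker--Planck equation \eqref{MFG_eq3}--\eqref{MFG_eq4} with the drift built from $\nabla u$; uniqueness will follow from the monotonicity assumption \textup{(f-M)} by the classical Lasry--Lions argument. Concretely, I would first set up the functional-analytic framework: take the convex closed set $\mathcal{K} := \{ m \in \mathcal{C}(Q) : m \ge 0,\ \int_{\T^d} m(t,\cdot)\,dx = 1,\ \|m\|_{\mathcal{C}^{\delta/2,\delta}(Q)} \le K \}$ for a suitable $\delta \in (0,1)$ and a constant $K$ to be fixed, endowed with the $\mathcal{C}(Q)$ topology; this set is compact in $\mathcal{C}(Q)$ by Arzelà--Ascoli. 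The heart of the argument is to show the solution map is well-defined, continuous, and maps $\mathcal{K}$ into itself for an appropriate $K$.

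\medskip

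The key step --- and the main obstacle --- is the a priori bound on $\|\nabla u\|_{L^\infty(Q;\R^d)}$ that is \emph{independent of $m \in \mathcal{K}$}, for otherwise neither the self-mapping property nor the drift regularity in the FP step can be controlled. Here I would use exactly the device advertised in the introduction: under \textup{(H)}, set $\phi := \exp(-u/2\nu)$; a direct computation turns the HJB equation into the linear (in the sense of having no gradient nonlinearity) reaction--diffusion equation
\begin{equation*}
-\partial_t \phi - \nu \Delta \phi + \nabla\!\cdot(h \phi) - (\nabla\!\cdot h)\phi + \frac{1}{2\nu}\Big( f(\cdot,\cdot,m) \Big)\phi = 0 \quad\text{in } Q,\qquad \phi(T,\cdot) = e^{-g/2\nu},
\end{equation*}
with a zeroth-order coefficient that is bounded in $L^\infty(Q)$ by \textup{(f-B)} uniformly in $m$, and with $h \in \mathcal{C}^{1+\alpha_0}$ by \textup{(H)}. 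By the maximum principle $\phi$ is bounded above and below away from zero, again uniformly in $m$ (the bounds depend only on $\|f\|_\infty$, $T$, $\nu$, $\|h\|$, $\|g\|_\infty$), hence $u = -2\nu\log\phi$ is uniformly bounded in $L^\infty(Q)$; then $L^q$ parabolic regularity for the $\phi$-equation (using $f(\cdot,\cdot,m) \in \Gamma$, which needs the Lipschitz-in-$x$ part of \textup{(f-L)} applied to $m \in \mathcal{K}$, whose $x$-Lipschitz norm is controlled by $K$) gives $\phi \in W^{1,2,q}(Q)$ with a uniform bound, and via Remark \ref{B21_lem12} this upgrades to $\phi, \nabla\phi \in \mathcal{C}^{\delta}(Q)$ with $\phi$ bounded below, whence $\|\nabla u\|_{L^\infty(Q;\R^d)} \le C$ and indeed $u \in W^{1,2,q}(Q)$, all uniformly in $m$. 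This is the only place the special structure of $H$ is genuinely used.

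\medskip

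Once $\nabla u$ is controlled in $\mathcal{C}^\delta$ uniformly, the drift $b := -\nabla_p H(t,x,\nabla u) = h - \nabla u$ lies in $\Theta$ with a uniform bound, so the FP equation \eqref{MFG_eq3}--\eqref{MFG_eq4} is a linear parabolic equation in divergence form with $\mathcal{C}^\delta$ coefficients and $\mathcal{C}^{2+\alpha_0}$ initial datum; standard parabolic theory (again $L^q$ estimates plus Remark \ref{B21_lem12}, or directly Schauder estimates) yields $\tilde m = m_k \in W^{1,2,q}(Q) \hookrightarrow \mathcal{C}^{\delta/2,\delta}(Q)$ with a bound depending only on the data, and the maximum principle plus integration of the equation over $\T^d$ give $\tilde m \ge 0$ and $\int \tilde m(t,\cdot) = 1$. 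Choosing $K$ as this a priori bound makes the map send $\mathcal{K}$ into $\mathcal{K}$; continuity in the $\mathcal{C}(Q)$ topology follows from the Lipschitz dependence in \textup{(f-L)} (the $L^\infty$-in-$m$ estimate \eqref{eq:f-2}) together with stability of the linear HJB/FP solutions on their coefficients. Schauder's fixed-point theorem then produces a solution $(\bar u,\bar m)$; bootstrapping the Hölder regularity of the coefficients through Schauder estimates for both equations yields the classical regularity \eqref{eq:classical} (using $g, m_0 \in \mathcal{C}^{2+\alpha_0}$ and $h \in \mathcal{C}^{1+\alpha_0}$), and in particular $\bar v \in \Theta$, $\bar\gamma \in \Gamma$. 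For uniqueness, suppose $(\bar u_1,\bar m_1)$ and $(\bar u_2,\bar m_2)$ are two solutions; subtract the HJB equations, subtract the FP equations, multiply the former difference by $\bar m_1 - \bar m_2$ and the latter by $\bar u_1 - \bar u_2$, integrate over $Q$ and add; the convexity of $H$ in $p$ (Remark \ref{prop:L_and_H}, the last displayed inequality for $L$) and the monotonicity \textup{(f-M)} force $\nabla \bar u_1 = \nabla \bar u_2$ and $\bar m_1 = \bar m_2$ a.e., and then the HJB equation gives $\bar u_1 = \bar u_2$.
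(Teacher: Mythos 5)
Your proposal is essentially correct, but it reaches existence by a genuinely different route than the paper. You run a direct Schauder fixed-point argument on a compact convex set of densities (Hölder ball in $\mathcal{C}(Q)$, compact by Arzel\`a--Ascoli), exploiting that the coupling is uniformly bounded by \textup{(f-B)}, so the Cole--Hopf/$L^q$-regularity bound on $\nabla u$ is uniform over the whole input set and the self-map constant $K$ can be chosen a posteriori without circularity. The paper instead follows the framework of Bonnans et al.: it applies the Leray--Schauder theorem (Lemma \ref{la:ls}) in $X=W^{1,2,q}(Q)\times W^{1,2,q}(Q)$ to a $\tau$-parametrized system involving a projection $\rho$ onto densities, and the key a priori bound (LS2) is obtained through the $\tau$-scaled transformation $\phi_\tau=\tau\exp(-\tau u_\tau/2\nu)$, which forces a three-step bootstrap showing $\|\nabla\phi_\tau\|_{L^\infty}\le \tau^2C$ before $\|\nabla u_\tau\|_{L^\infty}\le C$ can be recovered; your route avoids the homotopy, the projection operator, and that $\tau$-degenerate bootstrap, at the price of re-proving continuity/compactness of the solution map on your set rather than borrowing it from the cited framework. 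The core analytic devices are the same as the paper's (uniform $L^\infty$ bound on $u$, Cole--Hopf plus Lemma \ref{B21_thm4} for the gradient bound, exactly as in Proposition \ref{prop:du_k}; Schauder estimates via Remark \ref{B21_lem12} and Lemma \ref{B21_thm7} for \eqref{eq:classical}; monotonicity \textup{(f-M)} for uniqueness, which the paper only cites). Three small repairs: your transformed equation has a sign slip in the drift term (it should read $\partial_t\phi+\nu\Delta\phi+h\cdot\nabla\phi=\tfrac{1}{2\nu}\gamma\phi$, which changes nothing in the estimates); membership of $\gamma$ in $\Gamma$ does not follow from $m\in\mathcal{K}$, since a Hölder bound does not control the $x$-Lipschitz constant required in \eqref{eq:f-3} --- but this is harmless because the $W^{1,2,q}$ bound for $\phi$ only needs $\gamma\in L^\infty(Q)$, and $\bar\gamma\in\Gamma$ is recovered at the fixed point, where $\bar m\in W^{1,2,q}(Q)$ is Lipschitz in $x$; and in the uniqueness step, monotonicity alone (not assumed strict) gives $\nabla\bar u_1=\nabla\bar u_2$ only where mass is positive, so you should add that the strong maximum principle gives $\bar m_i>0$ for $t>0$, whence the two Fokker--Planck equations coincide, $\bar m_1=\bar m_2$, and then HJB uniqueness yields $\bar u_1=\bar u_2$.
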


\begin{rem}
The condition \textup{(f-P)} is not required for the proof of Theorem \ref{thm_MFG_regularity}. 
\end{rem}

\begin{rem}
At present, we cannot prove the existence and uniqueness result as above for a general $L$ satisfying the regularity properties stated in Remark \ref{prop:L_and_H}. This is an important open problem. 
\end{rem}

\subsection{The well-posedness of the GCG method}
\label{sec:GCG}

As stated in the following theorem, the GCG method \eqref{GCG} is well-defined, and the sequences generated by \eqref{GCG} are all uniformly bounded. The proofs will be postponed in Section \ref{sec:proof-wp}.

\begin{theorem}
\label{LP23_prop23}
Suppose that 
\textup{(H)}, 
\textup{(f-B)}, 
\textup{(f-L)}, 
\textup{(f-M)}, and \textup{(TIV)} are satisfied. 
Let $\bar{m}_0\in W^{1,2,q}(Q)$ be a function satisfying 
$\del_t \bar{m}_0 - \nu\Delta{\bar{m}_0}  = 0$ in $Q$ and $\bar{m}_0(0) = m_0$ on $\T^d$. 

\begin{itemize}
\item [\textup{(i)}] The GCG method \eqref{GCG} with the initial guess $\bar{m}_0$ generates  sequences 
\begin{equation*}
u_k\in W^{1,2,q}(Q),\quad m_k\in W^{1,2,q}(Q),\quad \gamma_k \in \Gamma,\quad 
\bar{m}_{k}\in W^{1,2,q}(Q)
\end{equation*}   
for $k=0,1,2,\ldots$. 

\item[\textup{(ii)}] These sequences are uniformly bounded. That is, there exists a constant $C > 0$ such that
\begin{gather*}
\|u_k\|_{W^{1, 2, q}(Q)}\le C,\quad \|m_k\|_{W^{1, 2, q}(Q)} \le C, \quad 
\|\gamma_k\|_{\Gamma} \le C,\quad \|\bar{m}_{k}\|_{W^{1, 2, q}(Q)}\le C,\\  \|w_k\|_{\Theta}\le C,\quad  \|v_k\|_{\Theta} \le C,
\end{gather*}
where 
\begin{equation}
\label{eq:vwk}
{v}_k=-\nabla_p H(t,x,\nabla {u}_k),\quad 
{w}_k={m}_k{v}_k=-m_k\nabla_p H(t,x,\nabla {u}_k). 
\end{equation}
\end{itemize}
\end{theorem}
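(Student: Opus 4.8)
The plan is to prove parts (i) and (ii) of Theorem~\ref{LP23_prop23} simultaneously by an induction on $k$, in which the inductive hypothesis carries a \emph{uniform} bound (independent of $k$) on $\|\bar m_k\|_{W^{1,2,q}(Q)}$ together with a uniform bound on $\operatorname{Lip}_x(\bar m_k)$, i.e.\ on the spatial Lipschitz constant of $\bar m_k(t,\cdot)$; the latter is what feeds assumption \textup{(f-L)} and keeps the coupling terms $\gamma_k$ bounded in $\Gamma$ with a $k$-independent constant. The base case $k=0$ is immediate from the hypothesis on $\bar m_0$ together with parabolic regularity for the heat equation. For the inductive step I would proceed through the chain $\bar m_k \rightsquigarrow \gamma_k \rightsquigarrow u_k \rightsquigarrow v_k \rightsquigarrow w_k,m_k \rightsquigarrow \bar m_{k+1}$, at each arrow invoking a linear (or linearizable) parabolic estimate with a constant that depends only on the previously established uniform bounds.

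The key steps, in order, are as follows. \textbf{Step 1 (coupling term).} From the inductive bound on $\bar m_k$, Remark~\ref{B21_lem12} gives $\|\bar m_k\|_{\mathcal C^\delta(Q)}+\|\nabla \bar m_k\|_{\mathcal C^\delta(Q;\mathbb R^d)}\le C$, hence $\operatorname{Lip}_x(\bar m_k)\le R$ for a fixed $R$; then \textup{(f-B)} and \textup{(f-L)} (the bound \eqref{eq:f-1}, the Hölder-in-$t$/Lipschitz-in-$m$ estimate \eqref{eq:f-2}, and the spatial Lipschitz bound \eqref{eq:f-3} with $\operatorname{Lip}_x(\bar m_k;f)\le C(R)$) yield $\|\gamma_k\|_\Gamma\le C$ uniformly. \textbf{Step 2 (HJB via Cole--Hopf).} With $H$ of the special form \eqref{eq:quadH}, set $\phi_k=\exp(-u_k/2\nu)$; the HJB equation \eqref{GCGa}--\eqref{GCGb} becomes a linear-in-$\phi_k$ reaction--convection--diffusion equation $-\partial_t\phi_k-\nu\Delta\phi_k + h\cdot\nabla\phi_k + \tfrac{1}{2\nu}\gamma_k\phi_k=0$, $\phi_k(T)=\exp(-g/2\nu)$. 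Since $h\in\mathcal C^{1+\alpha_0}$ and $\gamma_k$ is uniformly bounded (indeed uniformly in $L^\infty(0,T;W^{1,\infty})$), the maximum principle gives two-sided bounds $0<c\le\phi_k\le C$, and then $L^q$ parabolic regularity (combined with the Hölder embedding of Remark~\ref{B21_lem12}) gives $\|\phi_k\|_{W^{1,2,q}(Q)}\le C$; transforming back yields $\|u_k\|_{W^{1,2,q}(Q)}\le C$ and, crucially, $\|\nabla u_k\|_{L^\infty(Q;\mathbb R^d)}\le C$, all uniformly in $k$. \textbf{Step 3 (drift).} By \eqref{eq:vwk} and \eqref{eq:quadH}, $v_k=-\nabla u_k+h$, so $\|v_k\|_{L^\infty}\le C$ and $D_xv_k=-D_x\nabla u_k+D_xh\in L^q$ with $\|D_xv_k\|_{L^q}\le C$, i.e.\ $\|v_k\|_\Theta\le C$. \textbf{Step 4 (Fokker--Planck).} The FP equation \eqref{GCGc}--\eqref{GCGd} is linear in $m_k$ with drift coefficient $v_k\in\Theta$ and initial data $m_0\in\mathcal C^{2+\alpha_0}$; the standard maximum principle gives $m_k\ge 0$ and conservation of mass $\int m_k=1$, and $L^q$ parabolic estimates (with constant depending only on $\|v_k\|_\Theta\le C$) give $\|m_k\|_{W^{1,2,q}(Q)}\le C$, whence $m_k\in\mathcal D_1(\mathbb T^d)$ and $\|w_k\|_\Theta=\|m_kv_k\|_\Theta\le C$. \textbf{Step 5 (closing the induction).} The convex-combination update \eqref{GCGf} gives $\|\bar m_{k+1}\|_{W^{1,2,q}(Q)}\le (1-\delta_k)\|\bar m_k\|_{W^{1,2,q}(Q)}+\delta_k\|m_k\|_{W^{1,2,q}(Q)}\le C$ with the \emph{same} constant, since both inputs are bounded by the same $C$; likewise $\bar m_{k+1}\in\mathcal D_1(\mathbb T^d)$ by convexity and $\operatorname{Lip}_x(\bar m_{k+1})\le R$. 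This re-establishes the inductive hypothesis, completing the induction, and part (i) follows from the regularity obtained along the way.

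The main obstacle, and the one genuinely new ingredient relative to \cite{LP23}, is Step~2: obtaining a bound on $\|\nabla u_k\|_{L^\infty(Q;\mathbb R^d)}$ that is uniform in $k$ \emph{without} a Lipschitz-in-space assumption on $f$ of the type \eqref{coupling_term_LP23} or \eqref{eq:local0}. This is precisely where the restriction \eqref{eq:quadH} on the Hamiltonian is used: the Cole--Hopf substitution linearizes the HJB equation, so that a mere $L^\infty$ (or $L^\infty(0,T;W^{1,\infty})$) bound on $\gamma_k$ — which is all one gets from \textup{(f-B)}/\textup{(f-L)} under local coupling — suffices to run the linear parabolic theory and control $\phi_k$ in $W^{1,2,q}(Q)$; then $\nabla u_k = -2\nu\,\nabla\phi_k/\phi_k$ is controlled using the lower bound $\phi_k\ge c>0$. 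Care must be taken that the constant $c$ in this lower bound is itself $k$-independent, which follows because the zeroth-order coefficient $\tfrac1{2\nu}\gamma_k$ is uniformly bounded and the terminal data $\exp(-g/2\nu)$ is fixed. The remaining steps are routine applications of linear parabolic estimates and the maximum principle, and essentially parallel the corresponding arguments in \cite{B21,LP23}, but they must be organized so that every constant is traced back only to the fixed data $(h,g,m_0,C_0,L_f,\alpha_0,\nu,q,T)$ and the fixed radius $R$, never to $k$.
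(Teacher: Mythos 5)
Your proposal is correct and follows essentially the same route as the paper: Cole--Hopf linearization of the HJB equation, two-sided $k$-independent bounds on $\phi_k$, $L^q$ parabolic regularity (Lemma \ref{B21_thm4}) to obtain the uniform bound on $\|\nabla u_k\|_{L^\infty(Q;\mathbb{R}^d)}$, and then the chain of linear estimates together with the convex-combination induction for $\bar m_k$ and the Lipschitz bound feeding \textup{(f-L)}, exactly as in Proposition \ref{prop:du_k} and the proof of Theorem \ref{LP23_prop23}(ii). The only (harmless) deviations are that you get the two-sided bound on $\phi_k$ from the parabolic maximum principle rather than from the stochastic-control representation of $u_k$ used in Step 1 of Proposition \ref{prop:du_k} --- both work, since only the pointwise bound $|\gamma_k|\le C_0$ from \textup{(f-B)} enters, so no circularity with the inductive constants arises --- and a sign slip in the convection term of the transformed equation, which affects nothing.
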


\begin{rem}
The condition \textup{(f-P)} is not needed for the proof of this theorem, as well as that of Theorem \ref{thm_MFG_regularity}. 
\end{rem}

\begin{rem}
    \label{rem:m0}
In Theorem \ref{LP23_prop23}, the initial guess $\bar{m}_0$ may be chosen from a considerably wider class of functions.
More precisely, for any 
$v\in \mathcal{C}^{\alpha_0/2,1+\alpha_0}(Q;\mathbb{R}^d)$ with the same $\alpha_0$ in \textup{(TIV)}, a function $\bar{m}_0$ that satisfies the equation 
$\del_t \bar{m}_0 - \nu\Delta{\bar{m}_0}+\nabla\cdot (v\bar{m}_0)  = 0$ in $Q$ and $\bar{m}_0(0) = m_0$ on $\T^d$ can be used.
\end{rem}

\section{Main Results}
\label{sec:main}

If the coupling term $f$
has a potential, the MFG system \eqref{MFG_eq} admits a variational structure, which plays a crucial role in elucidating the convergence properties of the GCG method \eqref{GCG}.
From now on, we always assume that 
\textup{(H)}, 
\textup{(f-B)}, 
\textup{(f-L)}, 
\textup{(f-M)}, \textup{(f-P)}, and \textup{(TIV)}  
hold.
Then the existence and uniqueness theorem (Theorem \ref{thm_MFG_regularity}) can be applied, and thus we can consider the unique solution $(\bar{u},\bar{m})\in W^{1,2,q}(\Omega)\times W^{1,2,q}(\Omega)$ to \eqref{MFG_eq}. Moreover, $\bar{v}$ and $\bar{\gamma}$ are defined as \eqref{eq:vw}, and they, respectively, are functions of $\Theta$ and of $\Gamma$. 

\subsection{Variational formulations}
\label{GCG_for_MFG}
We first recall that, under the potential assumption \textup{(f-P)}, the MFG system \eqref{MFG_eq} has a variational formulation for the cost functionals
\begin{align*}
\mathcal{J} &:= \mathcal{J}_1 + \mathcal{J}_2,\\
\mathcal{J}_1(m, w) &:= \iint_Q m(t, x) L\left(t, x, \frac{w(t, x)}{m(t, x)}\right) ~ dxdt + \int_{\T^d} g(x) m(T, x) \, dx,\\
\mathcal{J}_2(m) &:= \int_0^T F(t, m(t))~ dt.
\end{align*}
Therein, we understood as $m L(w/m) = 0$ at points where $m = 0$.

\begin{lemma}
\label{prop:variational-MFG}
Let $\bar{w}=-\bar{m}\nabla_p H(t,x,\nabla \bar{u})$. Then, $(\bar{m}, \bar{w})\in\mathcal{R}$ satisfies 
\begin{equation} \label{minimizeJ}
\mathcal{J}(\bar{m}, \bar{w})=
\min_{(m, w) \in \mathcal{R}}{\mathcal{J}(m, w)}. \tag{$\mathcal{P}$}
\end{equation}
\end{lemma}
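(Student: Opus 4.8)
The plan is to give the classical \emph{verification argument} for potential MFG, using the value function $\bar u$ of Theorem~\ref{thm_MFG_regularity} as a dual certificate: I will show directly that $\mathcal{J}(m,w)\ge\mathcal{J}(\bar m,\bar w)$ for every $(m,w)\in\mathcal{R}$, with equality at $(\bar m,\bar w)$. In this approach convexity of $\mathcal{J}$ is not logically needed, although it does hold ($\mathcal{J}_1$ is jointly convex, being the perspective function of $v\mapsto L(t,x,v)$, and $\mathcal{J}_2$ is convex by \textup{(f-P)} together with \textup{(f-M)}), so one could instead reduce the statement to a first-order optimality condition on the convex set $\mathcal{R}$ (Lemma~\ref{LP23_lem5}). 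Throughout I use that every $(m,w)\in\mathcal{R}$ satisfies $m\ge 0$ and $\int_{\T^d}m(t,\cdot)\,dx\equiv 1$, hence $m(t)\in\mathcal{D}_1(\T^d)$ for all $t$, while $\bar m>0$ a.e.\ in $Q$ (maximum principle and mass conservation for the Fokker--Planck equation, together with $m_0\in\mathcal{D}_1(\T^d)$); this is what makes the local couplings $f(t,\cdot,m(t))$, $F(t,m(t))$ and the monotonicity \textup{(f-M)} meaningful here.

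First I would record the pointwise Fenchel--Young inequality coming from $H=L^\ast$ (cf.\ \eqref{eq:r-cost}): for a.e.\ $(t,x)$ and all $v\in\R^d$, $L(t,x,v)+H(t,x,\nabla\bar u)\ge-\nabla\bar u\cdot v$, with equality iff $v=-\nabla_pH(t,x,\nabla\bar u)$. Taking $v=w/m$ on $\{m>0\}$ (where $m=0$ one has $w=0$, since $w=mv$ with $v\in L^\infty$, and $mL(w/m)=0$ by convention), multiplying by $m$, and inserting the HJB equation \eqref{MFG_eq1}--\eqref{MFG_eq2} in the form $H(t,x,\nabla\bar u)=\del_t\bar u+\nu\Delta\bar u+f(t,x,\bar m)$ yields
\[
m\,L\!\left(t,x,\tfrac{w}{m}\right)+\nabla\bar u\cdot w+m\,\del_t\bar u+\nu\,m\,\Delta\bar u+m\,f(t,x,\bar m)\ \ge\ 0\qquad\text{a.e.\ in }Q,
\]
an equality for $(\bar m,\bar w)$ because $\bar w/\bar m=-\nabla_pH(t,x,\nabla\bar u)$. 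Integrating over $Q$, adding $\int_{\T^d}g\,m(T)\,dx$, integrating by parts in $x$ (periodic, no boundary terms) and in $t$, and using the constraint $\del_t m-\nu\Delta m+\nabla\cdot w=0$, $m(0)=m_0$, together with $\bar u(T)=g$, the $\bar u$-weighted terms telescope and I expect to reach
\[
\mathcal{J}_1(m,w)\ \ge\ \int_{\T^d}m_0(x)\,\bar u(0,x)\,dx-\iint_Q f(t,x,\bar m)\,m\,dx\,dt,
\]
with equality at $(\bar m,\bar w)$; subtracting the two gives $\mathcal{J}_1(m,w)-\mathcal{J}_1(\bar m,\bar w)\ge-\iint_Q f(t,x,\bar m)\,[m-\bar m]\,dx\,dt$. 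The regularity needed for these manipulations is available: $\bar u$ is a classical solution by Theorem~\ref{thm_MFG_regularity} (see \eqref{eq:classical}), while $m\in W^{1,2,q}(Q)$ and $w\in\Theta$.

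For the potential term I would apply \textup{(f-P)} with $m_2=m(t)$, $m_1=\bar m(t)$ to write $F(t,m(t))-F(t,\bar m(t))=\int_0^1\!\!\int_{\T^d}f(t,x,m_s)\,[m-\bar m]\,dx\,ds$ with $m_s:=(1-s)\bar m+sm\in\mathcal{D}_1(\T^d)$, and then \textup{(f-M)} applied to the pair $(m_s,\bar m)$, together with $m_s-\bar m=s(m-\bar m)$, to deduce $\int_{\T^d}\bigl[f(t,x,m_s)-f(t,x,\bar m)\bigr][m-\bar m]\,dx\ge 0$ for each $s\in(0,1)$; integrating in $s$ and then in $t$ gives $\mathcal{J}_2(m)-\mathcal{J}_2(\bar m)\ge\iint_Q f(t,x,\bar m)\,[m-\bar m]\,dx\,dt$. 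Adding this to the estimate for $\mathcal{J}_1$ cancels the common term and produces $\mathcal{J}(m,w)-\mathcal{J}(\bar m,\bar w)\ge 0$, i.e.\ \eqref{minimizeJ}. It only remains to note $(\bar m,\bar w)\in\mathcal{R}$, which is immediate from \eqref{MFG_eq3}--\eqref{MFG_eq4}, $\bar w=\bar m\bar v$, and $\bar v\in\Theta$ (Theorem~\ref{thm_MFG_regularity}).

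I expect the only delicate point to be the bookkeeping in the second step: verifying that the convention $mL(w/m)=0$ on $\{m=0\}$ is consistent with the Fenchel--Young inequality there, that the inequality becomes an \emph{equality exactly along} $(\bar m,\bar w)$ (so that $\mathcal{J}_1(\bar m,\bar w)$ equals — not merely dominates — the closed-form right-hand side, which is what makes the two bounds cancel), and that every integration by parts is licit with the regularity at hand. Everything else reduces to the Legendre duality $H=L^\ast$ and to the structural hypotheses \textup{(f-P)} and \textup{(f-M)}.
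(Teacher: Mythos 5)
Your verification argument is correct and is in substance the proof the paper delegates to \cite[Proposition 2]{B21}: Fenchel--Young with the HJB solution $\bar{u}$ as the dual certificate, integration by parts against the Fokker--Planck constraint defining $\mathcal{R}$, and the convexity-type inequality for $\mathcal{J}_2$ drawn from \textup{(f-P)} and \textup{(f-M)}; it is also exactly what Lemma \ref{LP23_lem19} combined with Propositions \ref{LP23_lem24} and \ref{LP23_cor25} yields upon taking $\widehat{\gamma}=\bar{\gamma}$. Your appeal to \textup{(f-M)} is legitimate (it is a standing assumption of Section \ref{sec:main}, and the paper's own Proposition \ref{LP23_lem24} relies on it in the same way), even though Remark \ref{rem:unique} suggests monotonicity serves only the uniqueness of the MFG solution.
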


This lemma is proved essentially in the same way as \cite[Proposition 2]{B21}, and we do not state it here. In fact, the Lipschitz continuity assumptions \eqref{eq:f-2} and \eqref{eq:f-3} are not required.

Accordingly, the GCG method can also be interpreted in a variational framework.
For a given $\gamma \in \Gamma$, we introduce
$$
\mathcal{Z}[\gamma](m, w) := \mathcal{J}_1(m, w) + 
\iint_Q \gamma(t, x) m(t, x) ~ dxdt .
$$
\begin{prop} \label{LP23_lem6}
Let $\gamma \in \Gamma$ be given. Then, there exists a unique $(\tilde{u},\tilde{m})\in  W^{1,2,q}(\Omega)\times W^{1,2,q}(\Omega)$ satisfying
\begin{subequations} 
\label{best}
\begin{alignat}{3}
 -\del_t \tilde{u} - \nu \Delta{\tilde{u}} + H(t,x, \nabla{\tilde{u}}) &= \gamma &\quad& \mbox{in } Q, \label{eq:best-a}\\ 
 \tilde{u}(T,x) &= g(x) && \mbox{on } \T^d,\label{eq:best-b}\\
\del_t \tilde{m} - \nu \Delta{\tilde{m}} - \nabla \cdot (\tilde{m} \nabla_p H(t, x, \nabla \tilde{u})) &= 0 && \mbox{in } Q, \label{eq:best-c}\\ 
\tilde{m}(0,x) &= m_0(x) && \mbox{on } \T^d. \label{eq:best-d}
\end{alignat}
\end{subequations}
Moreover, $(\tilde{m}, \tilde{w})=(\tilde{m}, - \tilde{m}\nabla_p H(\cdot,\cdot,\nabla \tilde{u}))$ is the unique solution of 
\begin{equation} \label{minimizeZ}
\mathcal{Z}[\gamma](\tilde{m}, \tilde{w})=
\min_{(m, w) \in \mathcal{R}} {\mathcal{Z}[\gamma](m, w)}. \tag{$\mathcal{P}[\gamma]$}
\end{equation}
\end{prop}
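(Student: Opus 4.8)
The statement splits naturally into (A) the existence, uniqueness and $W^{1,2,q}(Q)$-regularity of $(\tilde u,\tilde m)$ solving the decoupled system~\eqref{best}, and (B) the identification of $(\tilde m,\tilde w)$ as the unique solution of~\eqref{minimizeZ}. For~(A) the plan is to repeat the single-step analysis that underlies Theorem~\ref{LP23_prop23}. Because $H$ has the quadratic form~\textup{(H)}, the Cole--Hopf substitution $\phi=\exp(-\tilde u/2\nu)$ turns the terminal value problem~\eqref{eq:best-a}--\eqref{eq:best-b} into a \emph{linear} backward parabolic equation for $\phi$, with first-order coefficient $h$, zeroth-order coefficient $\gamma/(2\nu)\in L^\infty(Q)$ and terminal datum $\exp(-g/2\nu)$. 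Linear parabolic $L^q$-theory then provides a unique $\phi\in W^{1,2,q}(Q)$, and the parabolic maximum principle — using only $\|\gamma\|_\Gamma$ and the $\mathcal C$-norms of $h$ and $g$ — gives two-sided bounds $0<c\le\phi\le C$ on $\bar Q$. Hence $\tilde u=-2\nu\log\phi\in W^{1,2,q}(Q)$ (the chain rule, together with Remark~\ref{B21_lem12}, also yields $\nabla\tilde u\in\mathcal C^{\delta}(Q;\R^d)$), and uniqueness for $\tilde u$ follows from linear uniqueness for $\phi$. With $\nabla\tilde u$ fixed, the drift $-\nabla_p H(t,x,\nabla\tilde u)=h-\nabla\tilde u$ belongs to $\Theta$, so~\eqref{eq:best-c}--\eqref{eq:best-d} is a linear divergence-form parabolic equation; standard theory yields a unique $\tilde m\in W^{1,2,q}(Q)$, the minimum principle gives $\tilde m\ge 0$ (as $m_0\ge0$), and integrating~\eqref{eq:best-c} over $\T^d$ gives $\int_{\T^d}\tilde m(t)\,dx\equiv1$, so $\tilde m(t)\in\mathcal D_1(\T^d)$.

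For~(B), I would set $\tilde w=-\tilde m\,\nabla_p H(\cdot,\cdot,\nabla\tilde u)=\tilde m\,(h-\nabla\tilde u)$ and first check $(\tilde m,\tilde w)\in\mathcal R$: the divergence constraint is~\eqref{eq:best-c} rewritten, $\tilde w=\tilde m\,v$ with $v:=h-\nabla\tilde u\in L^\infty(Q;\R^d)$, and $D_x\tilde w=(D_x\tilde m)\,v+\tilde m\,D_xv\in L^q(Q;\R^{d\times d})$ since $\tilde m,v$ are bounded, $D_x\tilde m\in\mathcal C^{\delta}$ and $D_xv=D_xh-\nabla^2\tilde u\in L^q$. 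Next, for an arbitrary $(m,w)\in\mathcal R$, use $\tilde u$ as a test function in the continuity equation $\del_t m-\nu\Delta m+\nabla\cdot w=0$, integrate by parts on $Q$ (no spatial boundary terms on $\T^d$), and substitute~\eqref{eq:best-a} to obtain
\begin{equation*}
\int_{\T^d} g\,m(T)\,dx=\int_{\T^d}\tilde u(0)\,m_0\,dx+\iint_Q\big[H(t,x,\nabla\tilde u)-\gamma\big]\,m\,dxdt+\iint_Q\nabla\tilde u\cdot w\,dxdt,
\end{equation*}
all integrals being finite by the embeddings available for $q>d+2$ and by $w=mv$ with $v\in L^\infty$. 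Inserting this into $\mathcal Z[\gamma]$ gives
\begin{equation*}
\mathcal Z[\gamma](m,w)=\int_{\T^d}\tilde u(0)\,m_0\,dx+\iint_Q\Big[m\,L\big(t,x,\tfrac{w}{m}\big)+m\,H(t,x,\nabla\tilde u)+\nabla\tilde u\cdot w\Big]\,dxdt.
\end{equation*}
The Fenchel--Young inequality coming from~\eqref{eq:r-cost}, namely $L(t,x,v)+H(t,x,p)+p\cdot v\ge0$ with equality iff $v=-\nabla_p H(t,x,p)$, applied pointwise with $v=w/m$, $p=\nabla\tilde u$ and multiplied by $m\ge0$, shows the integrand is nonnegative. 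Hence $\mathcal Z[\gamma](m,w)\ge\int_{\T^d}\tilde u(0)\,m_0\,dx$ for every $(m,w)\in\mathcal R$, with equality for $(\tilde m,\tilde w)$ by the very choice of $\tilde w$; so $(\tilde m,\tilde w)$ solves~\eqref{minimizeZ} and the minimum value is $\int_{\T^d}\tilde u(0)\,m_0\,dx$. For uniqueness, if $(m^\ast,w^\ast)\in\mathcal R$ also attains the minimum then equality must hold in the pointwise Fenchel--Young inequality a.e.\ on $\{m^\ast>0\}$, forcing $w^\ast=-m^\ast\nabla_p H(\cdot,\cdot,\nabla\tilde u)$ there; since $w^\ast=m^\ast v^\ast$ vanishes a.e.\ on $\{m^\ast=0\}$, this identity holds a.e.\ on $Q$, so $m^\ast$ solves the \emph{same} linear Fokker--Planck problem~\eqref{eq:best-c}--\eqref{eq:best-d} as $\tilde m$, whence $m^\ast=\tilde m$ and $w^\ast=\tilde w$.

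I expect the main obstacle to be part~(A): securing the $W^{1,2,q}(Q)$-regularity together with the strict two-sided positivity of $\tilde u$ (equivalently of $\phi$) when $\gamma$ is only assumed to lie in $\Gamma$. This is precisely where the quadratic structure~\textup{(H)} is indispensable — Cole--Hopf removes the nonlinearity and reduces everything to linear parabolic estimates and the maximum principle, the same mechanism behind Theorems~\ref{thm_MFG_regularity} and~\ref{LP23_prop23}. By contrast, once~(A) is available, part~(B) is a routine convex-duality computation; the only care needed there is justifying the integrations by parts and the finiteness of the integrals, which follow from the Sobolev embeddings valid for $q>d+2$ and from the representation $w=mv$, $v\in L^\infty$.
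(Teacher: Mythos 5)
Your plan is sound, but it follows a genuinely different route from the paper on both halves. For the solvability of \eqref{best}, the paper simply invokes the mappings $\bsym{u}[\gamma]$, $\bsym{v}[\gamma]$, $\bsym{m}[\gamma]$, $\bsym{w}[\gamma]$ and cites Lemmas \ref{LP23_lem13}--\ref{LP23_lem17} (imported from \cite{LP23}), whereas you re-derive existence, uniqueness and the $W^{1,2,q}$ bounds from scratch via the Cole--Hopf substitution, the maximum principle for $\phi$, and Lemma \ref{B21_thm4}; this is exactly the mechanism the paper itself deploys in Proposition \ref{prop:du_k} and Theorem \ref{thm_MFG_regularity}, so your argument is valid but duplicates machinery the paper gets by citation (note only that transferring uniqueness from $\phi$ back to $\tilde u$ needs the small remark that any $W^{1,2,q}$ solution $u$ has $\nabla u$ bounded by Remark \ref{B21_lem12}, so that $e^{-u/2\nu}$ is again a $W^{1,2,q}$ solution of the linear problem). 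For the optimization part, the paper uses the quantitative energy-gap inequality of Lemma \ref{LP23_lem19}, $\tfrac{1}{2C_0}\iint_Q |v-\widehat v|^2 m \le \mathcal{Z}[\gamma](m,w)-\mathcal{Z}[\gamma](\widehat m,\widehat w)$, to get minimality at once, and then deduces uniqueness of the minimizer from the stability estimate of Proposition \ref{LP23_prop21} (a minimizer has $\sigma=0$, hence coincides with $(\widehat m,\widehat w)$). You instead run the classical verification/duality computation: test the continuity constraint with $\tilde u$, substitute the HJB equation, and reduce everything to the pointwise Fenchel--Young inequality $L(t,x,v)+H(t,x,p)+p\cdot v\ge 0$ with equality characterizing $v=-\nabla_p H$, then obtain uniqueness from the equality case together with uniqueness of the linear Fokker--Planck problem with drift $h-\nabla\tilde u\in\Theta$. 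Both arguments are correct; the paper's is shorter given that Lemma \ref{LP23_lem19} and Proposition \ref{LP23_prop21} are needed anyway for the convergence analysis, while yours is more self-contained, identifies the minimal value $\int_{\T^d}\tilde u(0)m_0\,dx$, and avoids the quantitative constants, at the price of having to justify the integration by parts and the strict convexity of $L$ in $v$ (which holds here since $L(v)=\tfrac12|v-h|^2$).
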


The proof of this proposition will be given in Section \ref{sec:LP23_lem6}. 

As a mater of fact, the second term of $\mathcal{Z}[\gamma](m, w)$ is nothing but 
the partial linearization of $\mathcal{J}_2(m)$ in the sense of the G\^{a}teaux derivative at $m_{\gamma}$ in the direction $m$:
$$
D\mathcal{J}_2[m_{\gamma}](m) = \lim_{\theta \to +0} \frac{\mathcal{J}_2(m_{\gamma} + \theta m) - \mathcal{J}_2(m_{\gamma})}{\theta},
$$
where $m_{\gamma} \in W^{1, 2, q}(Q)$ is a function satisfying $\gamma = f(\cdot, \cdot, m_{\gamma})$. From the uniformly boundedness of $f$ and Lebesgue's dominated convergence theorem, we see that the above limit converges to $\iint_Q \gamma m\, dx \, dt$. 

\begin{algorithm}[t]
 \caption{Variational formulation of GCG method for MFG}
 \begin{algorithmic}[1]
 \State Choose an initial guess $(\bar{m}_0, \bar{w}_0) \in \mathcal{R}$.
 \ForAll {$k = 0, 1, 2, \dots$}
 \State Set $\gamma_k = f(\cdot, \cdot, \bar{m}_k)$.
 \State Find the solution $(m_k, w_k)$ to the optimization problem ($\mathcal{P}[\gamma_k]$). That is, using the solution $(\tilde{m},\tilde{w})$ of \eqref{best} with $\gamma=\gamma_k$, we set 
 \[
 u_k=\tilde{u},\quad 
 m_k=\tilde{m},\quad 
v_k=- \nabla_p H(\cdot,\cdot,\nabla {u}_k),\quad  
 w_k=m_kv_k.
 \]
 \State Choose $\delta_k \in [0, 1]$ and set  $$(\bar{m}_{k+1}, \bar{w}_{k+1}) = (1-\delta_k) (\bar{m}_k, \bar{w}_k) + \delta_k (m_k, w_k).$$
 \EndFor
\end{algorithmic}
\label{alg_GCG}
\end{algorithm}

Based on Proposition \ref{LP23_lem6}, we derive a variational formulation of the GCG method as in Algorithm \ref{alg_GCG}. The original GCG method generates a sequence $(u_k,m_k,\gamma_k,\bar{m}_{k+1})$,
 while its variational counterpart is described in terms of $(\bar{m}_k,\bar{w}_k,m_k,w_k,\gamma)$, with $u_k$ and $v_k$ acting as auxiliary variables.

\begin{rem}
Although $\bar{w}_k$ does not affect the iteration itself, it is included to highlight the underlying variational structure. Moreover, introducing $\bar{w}_k$ is essential for the adaptive step-size selection methods (S1), (S2), and (S3) below.
\end{rem}

\subsection{Step-size selection methods}
\label{sec:step}
The choice of the stepsize $\delta_k$ plays a significant role in the convergence of the GCG method. To achieve higher convergence, we should choose $\delta_k$ appropriately.

We first recall the following three selection methods, which are studied in \cite{LP23}.

\begin{description}
\item[(S1)] \emph{QAG condition-based step-sizes}. Let $c \in (0, 1)$ if $d=1$, and $c \in (0, 1/2]$ if $d\ge 2$. Let $\tau \in (0, 1)$. We say that $\delta \in [0, 1]$ satisfies the \textit{Quasi--Armijo--Goldstein} (QAG) condition if it satisfies
\begin{equation} \label{QAGcond}
\mathcal{J}(\bar{m}_k^{\delta}, \bar{w}_k^{\delta}) \le \mathcal{J}(\bar{m}_k, \bar{w}_k) - c \delta \sigma_k, 
\end{equation}
where 
\begin{equation}
\label{eq:mdel}
(\bar{m}_k^{\delta}, \bar{w}_k^{\delta}) = (1-\delta) (\bar{m}_k, \bar{w}_k) + \delta (m_k, w_k).
\end{equation}
Then, choose $\delta_k$ as
$$
\delta_k = \tau^{i_k}, \qquad i_k = \mathrm{argmin}\,\{i \in \Z_{>0} \mid \mbox{$\delta = \tau^i$ satisfies \eqref{QAGcond}}\}.
$$
(It should be kept in mind that we have $\delta_k\le \tau$.) 
\item[(S2)] \emph{Optimal step-sizes}. With $(\bar{m}_k^{\delta}, \bar{w}_k^{\delta})$ defined above, choose $\delta_k$ as
\begin{equation}
\label{optimize_J(delta)}
\delta_k = \argmin{\delta \in [0, 1]}{\mathcal{J}(\bar{m}_k^{\delta}, \bar{w}_k^{\delta})}.    
\end{equation}
\item[(S3)] \emph{Exploitability-based step-sizes}. 
Choose $\delta_k$ as
$$
\delta_k = \min{\left\{1, \ \frac{\sigma_k}{2 L_f D_k}\right\}},
$$
where 
\begin{equation}
\label{eq:s3-dk}
D_k := \int_0^T \prod_{s=1, \infty} \|m_k(t) - \bar{m}_k(t)\|_{L^s(\T^d)} \,dt.
\end{equation}
\end{description}

Therein, $\sigma_k$ is defined in the following definition. 
\begin{definition}[Exploitability]
Letting $(\bar{m}_k,\bar{w}_k)$, $(m_k,w_k)$ and $\gamma_k$ be those of Algorithm \ref{alg_GCG}, we set 
\begin{align}
\sigma_k 
&= \mathcal{Z}[\gamma_k](\bar{m}_k, \bar{w}_k) - \mathcal{Z}[\gamma_k](m_k, w_k)\nonumber\\
&= \mathcal{Z}[\gamma_k](\bar{m}_k, \bar{w}_k) - \min\limits_{(m, w) \in \mathcal{R}}
\mathcal{Z}[\gamma_k](m, w) \ge 0,\label{eq:exploitability}
\end{align}
which we call the \emph{exploitability} at the $k$th iteration. 
\label{def:ex}
\end{definition}

These three selection methods are chosen appropriately at each iteration. In this sense, we collectively refer to them as \textit{adaptive step-izes}. On the other hand, it is conceivable to preselect $\delta_k$ in advance. That is, we consider the following predefined step-sizes:

\begin{description}
\item[(S4)] \emph{Predefined step-sizes}. Let $k_1$ and $k_2$ be constants such that $1 \le k_2 \le k_1$ and choose 
$$
\delta_k = \frac{k_2}{k+k_1}.
$$
\end{description}

\subsection{Explicit rates of convergence}
\label{sec:convergence}

We are now in a position to state our main results on the rate of convergence of the GCG method. 
To do so, we need the following quantity. 

\begin{definition}[The optimality gap]
\label{def:gap}
Letting $(\bar{m}_k,\bar{w}_k)$ be that of Algorithm \ref{alg_GCG}, we set 
\begin{align}
\eps_k &:= \mathcal{J}(\bar{m}_k, \bar{w}_k) - \mathcal{J}(\bar{m}, \bar{w}) \nonumber \\
&=\mathcal{J}(\bar{m}_k, \bar{w}_k)-
\min\limits_{(m, w) \in \mathcal{R}}{\mathcal{J}(m, w)}
\ge 0,\label{eq:optimalitygap}
\end{align}
which we call the \emph{optimality gap} at the $k$th iteration. 
\end{definition}

\begin{theorem} \label{LP23_thm7}
Let $(\bar{m}_{k},\bar{w}_{k},m_k,w_k,\gamma_k)$, $u_k$ and $v_k$ be the sequences generated by the GCG method \eqref{GCG} (Algorithm \ref{alg_GCG}) for a given $(\bar{m}_{0},\bar{w}_{0})\in\mathcal{R}$. 
Let $\sigma_k$ and $\eps_k$ be the exploitability and optimality gap defined in 
\eqref{eq:exploitability} and \eqref{eq:optimalitygap}, respectively. 
We choose $r \in \mathbb{R}$ by
\begin{equation}
\label{eq:val-p}
r =
\begin{cases}
2 & (d = 1)\\
\text{any number greater than } d & (d \ge 2).
\end{cases}
\end{equation}
Then, there exists a constant $C > 0$ such that for all $k = 0, 1, 2, \dots$,
\begin{subequations}
\begin{align}
\|\bar{m}_{k}-\bar{m}\|_{{L^2(0, T; L^{\infty}(\T^d))}} + \|\bar{w}_{k} - \bar{w}\|_{L^2(Q; \R^d)} & \le C \eps_{k}^{{\frac{1}{r}}}, \label{LP23_thm_ineq1}\\
\|\gamma_k - \bar{\gamma}\|_{{L^2(0, T; L^{\infty}(\T^d))}} &\le C \eps_k^{{\frac{1}{r}}}, \label{LP23_thm_ineq2}\\
\|u_k-\bar{u}\|_{L^{\infty}(Q)} &\le C \eps_k^{{\frac{1}{r}}}, \label{LP23_thm_ineq3}\\
\|m_k-\bar{m}_k\|_{{L^2(0, T; L^{\infty}(\T^d))}} + \|w_k - \bar{w}_k\|_{L^2(Q; \R^d)} & \le C \sigma_k^{{\frac{1}{r}}}.  \label{LP23_thm_ineq4}
\end{align}
\end{subequations}
Finally, we have $\eps_k\le C$. 
\end{theorem}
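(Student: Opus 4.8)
\textbf{Proof proposal for Theorem \ref{LP23_thm7}.}

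The plan is to establish the four stability estimates \eqref{LP23_thm_ineq1}--\eqref{LP23_thm_ineq4} by reducing everything to a single quantitative convexity inequality coming from the strong convexity of $L$ in its velocity argument (the last inequality in Remark \ref{prop:L_and_H}), combined with the variational characterizations in Lemma \ref{prop:variational-MFG} and Proposition \ref{LP23_lem6}, and with the uniform bounds of Theorem \ref{LP23_prop23}. First, for \eqref{LP23_thm_ineq4} I would exploit that $(m_k,w_k)$ is the minimizer of the \emph{linear-in-$w$-quadratic-cost} functional $\mathcal{Z}[\gamma_k]$ over the convex set $\mathcal{R}$ (Proposition \ref{LP23_lem6}, Lemma \ref{LP23_lem5}). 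Writing $(m_k^\theta,w_k^\theta)=(1-\theta)(m_k,w_k)+\theta(\bar m_k,\bar w_k)$, optimality at $\theta=0$ together with the uniform strong convexity of the map $(m,w)\mapsto mL(w/m)$ along the segment (here the quadratic structure of $H$ in \textup{(H)} is what gives the $\tfrac{1}{2C}|v_2-v_1|^2$ term, uniformly since $m_k,\bar m_k$ are bounded above and the relevant velocities $v_k,\bar v_k$ are bounded in $L^\infty$ by Theorem \ref{LP23_prop23}) yields
\begin{equation*}
\tfrac{1}{C}\,\|w_k-\bar w_k\|_{L^2(Q;\R^d)}^2 \;\le\; \mathcal{Z}[\gamma_k](\bar m_k,\bar w_k)-\mathcal{Z}[\gamma_k](m_k,w_k)\;=\;\sigma_k .
\end{equation*}
This controls $\|w_k-\bar w_k\|_{L^2}$ by $\sigma_k^{1/2}$; to pass from the $w$-difference to the $m$-difference in the $L^2(0,T;L^\infty)$ norm I would use the continuity equation satisfied by both pairs (subtract the two FP equations; the difference solves a heat equation with right-hand side $\nabla\cdot(w_k-\bar w_k)$ and zero initial data), then apply the parabolic/Sobolev estimate of Remark \ref{B21_lem12} via $W^{1,2,q}$-regularity. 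The price of going through $L^q$-maximal regularity with $q>d+2$ and the embedding into $\mathcal{C}^{\delta}$ is exactly what degrades the exponent from $\tfrac12$ to $\tfrac1r$ with $r$ as in \eqref{eq:val-p}; the case $d=1$ is special because one can stay in $L^2$-based spaces and keep $r=2$.

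Second, estimate \eqref{LP23_thm_ineq1} is proved the same way but now using that $(\bar m,\bar w)$ minimizes the \emph{full} functional $\mathcal{J}$ over $\mathcal{R}$ (Lemma \ref{prop:variational-MFG}): strong convexity of $\mathcal{J}_1$ along the segment from $(\bar m,\bar w)$ to $(\bar m_k,\bar w_k)$, plus convexity of $\mathcal{J}_2$ (guaranteed by the monotonicity built into the potential $F$ under \textup{(f-P)}), gives
\begin{equation*}
\tfrac{1}{C}\,\|w_k-\bar w\|_{L^2(Q;\R^d)}^2 \;\le\; \mathcal{J}(\bar m_k,\bar w_k)-\mathcal{J}(\bar m,\bar w)\;=\;\eps_k ,
\end{equation*}
and then the same continuity-equation-plus-parabolic-regularity argument upgrades this to the claimed $L^2(0,T;L^\infty)$ bound on $\bar m_k-\bar m$ with exponent $1/r$. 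Estimate \eqref{LP23_thm_ineq2} then follows immediately from \eqref{LP23_thm_ineq1} and the Lipschitz assumption \eqref{eq:f-2} in \textup{(f-L)}: $\|\gamma_k-\bar\gamma\|_{L^2(0,T;L^\infty)}=\|f(\cdot,\cdot,\bar m_k)-f(\cdot,\cdot,\bar m)\|_{L^2(0,T;L^\infty)}\le L_f\|\bar m_k-\bar m\|_{L^2(0,T;L^\infty)}$. For \eqref{LP23_thm_ineq3} I would subtract the two HJB equations \eqref{GCGa} and \eqref{MFG_eq1}: $w:=u_k-\bar u$ solves a linear parabolic equation $-\partial_t w-\nu\Delta w + b\cdot\nabla w = \gamma_k-\bar\gamma$ with zero terminal data, where $b$ is a bounded drift (arising from the mean-value form of $H(\cdot,\nabla u_k)-H(\cdot,\nabla\bar u)$, bounded because of \textup{(H)} and the uniform gradient bounds from Theorem \ref{LP23_prop23}); a maximum-principle / $L^\infty$-energy estimate then gives $\|u_k-\bar u\|_{L^\infty(Q)}\le C\|\gamma_k-\bar\gamma\|_{L^2(0,T;L^\infty)}\le C\eps_k^{1/r}$, using \eqref{LP23_thm_ineq2}.

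Finally, the bound $\eps_k\le C$ is the easiest: by Theorem \ref{LP23_prop23}(ii) the sequences $\bar m_k,\bar w_k$ are uniformly bounded in $W^{1,2,q}(Q)\times\Theta$, hence $\mathcal{J}_1(\bar m_k,\bar w_k)$ is uniformly bounded (the running cost $L(t,x,w/m)=\tfrac12|w/m-h|^2$ is controlled by $\|v_k\|_{L^\infty}$ and the terminal term by $\|g\|_{L^\infty}\|\bar m_k(T)\|_{L^1}=\|g\|_{L^\infty}$), and $\mathcal{J}_2(\bar m_k)=\int_0^T F(t,\bar m_k(t))\,dt$ is uniformly bounded by continuity of $F$ together with the fact that $\bar m_k(t)$ ranges in a compact subset of $\mathcal{D}_1(\T^d)$ (uniform $\mathcal{C}^\delta$-bound from Remark \ref{B21_lem12}); subtracting the fixed finite value $\mathcal{J}(\bar m,\bar w)$ finishes it. The main obstacle I anticipate is not any single one of these steps in isolation but keeping every constant genuinely independent of $k$ while passing from the $L^2$-control of the $w$-variables to the stronger $L^2(0,T;L^\infty)$-control of the $m$-variables through the parabolic regularity chain: one must verify that the drift/coefficient bounds feeding into the maximal-regularity and Schauder-type estimates depend only on the uniform bounds of Theorem \ref{LP23_prop23} and on the data, and must track carefully how the $L^q$-exponent forces the loss $1/2 \rightsquigarrow 1/r$ (and why $d=1$ escapes it). Handling the degenerate set $\{m=0\}$ in the convexity estimate for $mL(w/m)$ also requires a small but standard approximation argument.
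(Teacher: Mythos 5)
There is a genuine gap at the heart of your argument: the claimed coercivity inequalities $\tfrac1C\|w_k-\bar w_k\|_{L^2(Q;\R^d)}^2\le\sigma_k$ and $\tfrac1C\|\bar w_k-\bar w\|_{L^2(Q;\R^d)}^2\le\eps_k$ do not follow from ``uniform strong convexity of $(m,w)\mapsto mL(w/m)$ along the segment.'' With the quadratic $L$ of \textup{(H)}, $mL(w/m)$ is a perspective-type function: it is jointly convex, but its directional second derivative in a direction $(\delta m,\delta w)$ equals $\tfrac1m\left|\delta w - v\,\delta m\right|^2$ with $v=w/m$, which vanishes whenever $\delta w=v\,\delta m$ even if $\delta w\neq0$. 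Boundedness of $m_k,\bar m_k$ from above gives a lower bound on the weight $1/m$, not joint strong convexity in $(m,w)$, so convexity along the segment only controls a quantity of the form $\iint|\delta w - v_\theta\,\delta m|^2$, never $\|\delta w\|_{L^2}^2$ by itself. This is exactly why the paper's proof proceeds in the opposite order: Lemma \ref{LP23_lem19} (a verification/calibration argument using the HJB solution $\widehat u$ and the strong convexity of $L$ in the velocity variable, not abstract convexity of $\mathcal{Z}$) yields the $m$-weighted velocity estimate $\tfrac{1}{2C_0}\iint_Q|v-\widehat v|^2m\le\mathcal{Z}[\widehat\gamma](m,w)-\mathcal{Z}[\widehat\gamma](\widehat m,\widehat w)$; then the Fokker--Planck stability estimate (Proposition \ref{LP23_lem14}: drift $v_1$, source $-\nabla\cdot((v_2-v_1)m_2)$, energy estimate in the Gel'fand triple, and interpolation of the resulting $L^2(0,T;H^1)$ bound with the uniform $W^{1,2,q}$/$\mathcal{C}^1$-type bounds followed by $W^{1,r}\hookrightarrow L^\infty$, which is the true source of the exponent $1/r$) gives $\|m-\widehat m\|_{L^2(0,T;L^\infty)}\le C\sigma^{1/r}$; and only afterwards is the $w$-difference recovered from $w-\widehat w=m(v-\widehat v)+(m-\widehat m)\widehat v$. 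Your plan inverts this (first $\|\delta w\|_{L^2}$, then $\delta m$ from a heat equation with source $\nabla\cdot\delta w$), and the first step is not justified; moreover, even granting $\|\delta w\|_{L^2}\lesssim\sqrt{\sigma_k}$, your proposed ``maximal regularity in $W^{1,2,q}$ plus Remark \ref{B21_lem12}'' route for $\delta m$ would require $\nabla\cdot\delta w$ small in $L^q(Q)$, which is not available — the paper's $V'$-energy estimate with $\|\varphi\|_{V'}\le\|(v_2-v_1)m_2\|_{L^2}$ and the interpolation trick are what actually carry the argument.

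The remaining pieces of your proposal are essentially the paper's: the transfer $\eps_k\ge\mathcal{Z}[\bar\gamma](\bar m_k,\bar w_k)-\mathcal{Z}[\bar\gamma](\bar m,\bar w)$ corresponds to Proposition \ref{LP23_lem24}/\ref{LP23_cor25} (your ``convexity of $\mathcal{J}_2$'' is exactly the subgradient inequality \eqref{LP23_lem24_ineq1}, which in the paper uses \textup{(f-P)} together with \eqref{eq:f-4}); \eqref{LP23_thm_ineq2} from \textup{(f-L)} and \eqref{LP23_thm_ineq3} via a linearized HJB stability bound (Lemma \ref{LP23_lem15}) match the paper; and your argument for $\eps_k\le C$ agrees with the paper's appeal to Theorem \ref{LP23_prop23}. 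But as written, the proposal does not prove \eqref{LP23_thm_ineq1} or \eqref{LP23_thm_ineq4}: you need the weighted-velocity coercivity of Lemma \ref{LP23_lem19} (or an equivalent verification argument) in place of the asserted strong convexity in $w$.
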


\begin{theorem} \label{LP23_thm8-1}
Suppose that the step-size $\delta_k$ is chosen in accordance with either adaptive stepsizes, that is \textup{(S1)}, \textup{(S2)} or \textup{(S3)}. 
Then, under the same assumptions of Theorem \ref{LP23_thm7}, we have the following. If $d=1$, there exists a constant $\lambda \in (0, 1)$ such that for all $k = 0, 1, 2, \dots$,
\begin{equation}
\label{eq:rate-d1}
\eps_k \le \eps_0 \lambda^k.
\end{equation}
On the other hand, if $d \ge 2$, there exist constants $N, M>0$ such that for all $k = 0, 1, 2, \dots$,
\begin{equation}
\label{eq:rate-d2}
\eps_k \le \frac{M}{(k+N)^{\frac{1}{\rho}}} 
\end{equation}
where 
\begin{equation}
\label{eq:const-r}
\rho = 1-\frac{2}{r(r-1)} < 1.
\end{equation}
\end{theorem}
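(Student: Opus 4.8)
\textbf{Proof proposal for Theorem \ref{LP23_thm8-1}.}

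The plan is to exploit the coupled recursion between the optimality gap $\eps_k$ and the exploitability $\sigma_k$ that follows from Theorem \ref{LP23_thm7}, in the spirit of the classical Frank--Wolfe / conditional-gradient analysis. The starting point is the elementary descent inequality for the GCG step: using the convexity of $\mathcal{J}_1$ and the partial linearization interpretation of $\mathcal{Z}[\gamma_k]$ together with a quadratic (modulus-of-smoothness) remainder, one obtains an estimate of the form
\begin{equation*}
\eps_{k+1} \le \eps_k - \delta_k \sigma_k + \tfrac{1}{2} A \delta_k^2 \|(m_k,w_k)-(\bar m_k,\bar w_k)\|^2_{\star}
\end{equation*}
for an appropriate seminorm $\|\cdot\|_\star$ and a constant $A$ independent of $k$ (this uses the uniform bounds of Theorem \ref{LP23_prop23} and the strong convexity estimate on $L$ in Remark \ref{prop:L_and_H}). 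Combining this with the bound $\|(m_k,w_k)-(\bar m_k,\bar w_k)\|_\star \le C\sigma_k^{1/r}$ from \eqref{LP23_thm_ineq4} yields the master recursion
\begin{equation*}
\eps_{k+1} \le \eps_k - \delta_k \sigma_k + B\,\delta_k^2\,\sigma_k^{2/r},
\end{equation*}
and one also has the trivial but crucial inequality $\eps_k \le \sigma_k$ (since $(\bar m,\bar w)$ is feasible for $\mathcal{P}[\gamma_k]$ and $\mathcal{J}\ge\mathcal{Z}[\gamma_k]$ up to the linearization error controlled by monotonicity/convexity — this is where the potential structure \eqref{eq:f-5} enters).

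Next I would handle the step-size. For (S1) the QAG condition \eqref{QAGcond} directly gives $\eps_{k+1}\le\eps_k - c\delta_k\sigma_k$, and a standard backtracking argument shows that the accepted $\delta_k$ is bounded below by $\min\{\tau, \tau \cdot c'\sigma_k^{1-2/r}\}$ (the largest $\tau^i$ for which the quadratic remainder is dominated by the linear decrease); for (S2) the optimal step-size does at least as well as any admissible choice, so the (S1) bound applies a fortiori; for (S3) the explicit formula $\delta_k=\min\{1,\sigma_k/(2L_fD_k)\}$ plugged into the master recursion, with $D_k$ controlled via \eqref{LP23_thm_ineq4} (note $D_k$ uses the $L^1$--$L^\infty$ product, which is $\le \|m_k-\bar m_k\|_{L^2(0,T;L^\infty)}^2$-type quantity up to the mass-one normalization), again yields $\eps_{k+1}\le\eps_k - c\delta_k\sigma_k$ with $\delta_k\gtrsim \min\{1,\sigma_k^{1-2/r}\}$ in the relevant regime. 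In all three cases we arrive at a single scalar recursion of the form $\eps_{k+1}\le\eps_k - \kappa\,\eps_k\cdot\min\{1,\eps_k^{-2/r}\}\cdot \eps_k^{?}$; more precisely, using $\eps_k\le\sigma_k$ and $\sigma_k \le C$, one gets
\begin{equation*}
\eps_{k+1} \le \eps_k - \kappa\, \eps_k^{\,2-\frac{2}{r(r-1)}}
\end{equation*}
for $k$ large (and $\eps_{k+1}\le(1-\kappa')\eps_k$ while $\eps_k$ is bounded away from $0$). When $d=1$ we have $r=2$ so the exponent $2-\tfrac{2}{r(r-1)}=1$, and the recursion $\eps_{k+1}\le(1-\kappa)\eps_k$ gives the geometric rate \eqref{eq:rate-d1} with $\lambda=1-\kappa$. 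When $d\ge2$ the exponent is $2-\tfrac{2}{r(r-1)}=1+\rho$ with $\rho$ as in \eqref{eq:const-r}, and the standard lemma on sequences satisfying $\eps_{k+1}\le\eps_k-\kappa\eps_k^{1+\rho}$ yields $\eps_k\le M(k+N)^{-1/\rho}$, which is \eqref{eq:rate-d2}.

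The main obstacle I anticipate is establishing the master recursion with the \emph{correct} exponent on $\sigma_k$ — i.e. showing that the quadratic remainder term is genuinely $O(\delta_k^2\sigma_k^{2/r})$ rather than merely $O(\delta_k^2)$. This requires that the strong-convexity/smoothness estimate for $\mathcal{J}_1$ along the segment $(\bar m_k^\delta,\bar w_k^\delta)$ be expressed in a norm that is controlled by \eqref{LP23_thm_ineq4}, and here the degeneracy of $mL(w/m)$ where $m$ vanishes, together with the fact that the natural norm from the strong convexity of $L$ is a weighted $L^2$ norm rather than the unweighted $L^2(Q;\R^d)$ norm appearing in \eqref{LP23_thm_ineq4}, is delicate; one must use the uniform $L^\infty$ and $W^{1,2,q}$ bounds of Theorem \ref{LP23_prop23} (in particular a uniform upper bound on $\bar m_k, m_k$ and control of where they are small) to pass between these norms. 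The second, more routine, obstacle is the lower bound on the accepted QAG step-size in (S1): one must verify that the quadratic-remainder bound is uniform in $k$ so that the backtracking terminates after a number of steps that can be bounded in terms of $\sigma_k$ alone, which again rests on the uniform bounds of Theorem \ref{LP23_prop23}. Once the master recursion is in hand, the reduction to the scalar recursions and the invocation of the standard convergence lemmas are straightforward.
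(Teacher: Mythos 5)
Your overall architecture — a descent inequality with a quadratic remainder, per-rule lower bounds on $\delta_k$ for (S1)–(S3), and reduction to a scalar recursion handled by a standard lemma — is the same as the paper's (Propositions \ref{LP23_lem27-1}, \ref{KW22_prop4.2}, the contradiction comparison for (S2), the case split for (S3), and Lemma \ref{KW22_prop4.10}). The genuine gap is in the linchpin: how the quadratic remainder in your ``master recursion'' is obtained. You propose to extract it from a modulus-of-smoothness/strong-convexity estimate for $\mathcal{J}_1$ along the segment $(\bar{m}_k^{\delta},\bar{w}_k^{\delta})$, and you yourself flag the degeneracy of $mL(w/m)$ at $m=0$ and the norm mismatch as the main obstacle. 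That route is not available here: $\mathcal{J}_1$ is positively homogeneous of degree one in $(m,w)$, its curvature along segments involves division by the interpolated density, and under \textup{(TIV)} the densities are not bounded away from zero, so no uniform quadratic upper bound for $\mathcal{J}_1$ in the norms controlled by \eqref{LP23_thm_ineq4} can be expected. The missing idea — the paper's actual mechanism (Propositions \ref{LP23_lem24} and \ref{LP23_lem27-1}) — is that $\mathcal{J}_1$ needs nothing beyond plain convexity, because the exploitability $\sigma_k$ is defined with the full $\mathcal{J}_1$ and only $\mathcal{J}_2$ is linearized; the entire $O(\delta^2)$ remainder comes from $\mathcal{J}_2$, where the potential representation \textup{(f-P)} together with the Lipschitz bound \textup{(f-L)} gives $\mathcal{J}_2(\bar{m}_k^{\delta})-\mathcal{J}_2(\bar{m}_k)\le \delta\iint_Q \gamma_k(m_k-\bar{m}_k)\,dx\,dt+\delta^2 L_f D_k$ with $D_k\le \|m_k-\bar{m}_k\|_{L^2(0,T;L^{\infty}(\T^d))}^2$. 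Without this observation your proof does not close; with it, your recursion $\eps_{k+1}\le\eps_k-\delta_k\sigma_k+B\delta_k^2\sigma_k^{2/r}$ indeed follows from \eqref{LP23_thm_ineq4}. (The strong convexity of $L$ enters only through Lemma \ref{LP23_lem19}, i.e.\ it is already encapsulated in \eqref{LP23_thm_ineq4}, not in the descent inequality.)

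Granting the corrected descent inequality, the rest of your plan is sound and differs from the paper in one technical choice: you bound the remainder by $C\sigma_k^{2/r}$ via \eqref{LP23_thm_ineq4}, whereas the paper (Proposition \ref{LP23_lem27-2}) first compares $m_k$ with $\bar{m}$ and expresses everything in terms of $\eps_k$, obtaining $\|m_k-\bar{m}_k\|_{L^2(0,T;L^{\infty}(\T^d))}\le C\eps_k^{1/(r(r-1))}$ and $\sigma_k\le C\eps_k^{2/(r(r-1))}$. Your variant is legitimate: combined with $\eps_k\le\sigma_k$ it produces the scalar recursion with exponent $2-2/r$, and since $1-2/r\le\rho$ and $\eps_k\le C$ by Theorem \ref{LP23_thm7}, this can be weakened to your asserted recursion with exponent $2-2/(r(r-1))$ and hence to \eqref{eq:rate-d2} (with $r=2$, $d=1$, it gives the geometric rate \eqref{eq:rate-d1}). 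Your treatments of (S1) (backtracking lower bound on the accepted step), (S2) (line search dominates the QAG step from the same iterate), and (S3) (plug in the explicit formula, split on $\delta_k=1$ or not) coincide with the paper's arguments; for (S1) one additional point the paper makes explicit is the continuity of $\delta\mapsto\mathcal{J}(\bar{m}_k^{\delta},\bar{w}_k^{\delta})$ needed for the intermediate value theorem (Remark \ref{rem:Wk}), which you should record when making the backtracking bound rigorous.
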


\begin{theorem} \label{LP23_thm8-2}
Suppose that the step-size $\delta_k$ is chosen in accordance with the prescribed step-size \textup{(S4)}. 
Then, 
under the same assumptions of Theorem \ref{LP23_thm7}, 
there exists a constant $C_{\ast} > 0$ satisfying
\begin{equation} 
\label{thm8-2_eq}
\eps_{k+1} \le (1-\delta_k) \eps_k + C_{\ast} \delta_k^2 \eps_k^{\frac{2}{r(r-1)}}.
\end{equation}
Moreover, we have
\begin{equation} 
\label{thm8-2_eq_2}
\eps_k \le \frac{N}{(k + k_1)^s},
\end{equation}
where $s$, $N$ are constants defined by $s = k_2, N = \eps_0 k_1^{k_2} \exp{\left(C_{\ast} k_2^2(k_1+1)/k_1^2\right)}$ if $d = 1$, and by 
$$
s < \min{\left\{k_2, \frac{1}{\rho}\right\}}, \qquad N = \max{\left\{\eps_0 k_1^s, \left(\frac{C_{\ast} k_2^2}{k_2-s}\right)^{\frac{1}{r}}\right\}}, 
$$
if $d \ge 2$. Therein, the constant $\rho$ is defined by \eqref{eq:const-r}. 
\end{theorem}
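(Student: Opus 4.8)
\textbf{Proof proposal for Theorem \ref{LP23_thm8-2}.}

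The plan is to separate the two assertions: first establish the recursive inequality \eqref{thm8-2_eq}, then deduce the explicit decay rate \eqref{thm8-2_eq_2} from it by an induction on $k$. For the recursion, I would start from the convexity of $\mathcal{J}$ along the segment $(\bar m_k^\delta,\bar w_k^\delta)$ together with the descent identity that links the directional variation of $\mathcal J$ to the exploitability $\sigma_k$; this should give a bound of the form $\mathcal{J}(\bar m_k^{\delta_k},\bar w_k^{\delta_k})\le \mathcal{J}(\bar m_k,\bar w_k)-\delta_k\sigma_k+C\delta_k^2 D_k^{?}$, where the quadratic-in-$\delta_k$ term comes from the strong convexity constant of $L$ in $v$ (Remark \ref{prop:L_and_H}) controlling the second-order remainder of $\mathcal{J}_1$, and the $\mathcal{J}_2$ remainder is handled by the Lipschitz bound \eqref{eq:f-2} on $f$, producing $L_f D_k$ with $D_k$ as in \eqref{eq:s3-dk}. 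Subtracting $\mathcal{J}(\bar m,\bar w)$ from both sides turns this into $\eps_{k+1}\le \eps_k-\delta_k\sigma_k+C_\ast\delta_k^2 D_k^2$ (the square arising because the interpolation of $L^1$ and $L^\infty$ norms in $D_k$ pairs with a second-order term). Then I would invoke Theorem \ref{LP23_thm7}: inequality \eqref{LP23_thm_ineq4} bounds $\|m_k-\bar m_k\|_{L^2(0,T;L^\infty)}$ and $\|m_k-\bar m_k\|_{L^2(Q)}$ (hence their $L^1$-in-time versions, by Cauchy--Schwarz and boundedness of $Q$) by $C\sigma_k^{1/r}$, so $D_k\le C\sigma_k^{2/r}$; since $\sigma_k\ge \eps_k$ is false in general, I instead use the complementary estimate $\sigma_k\ge \eps_k^{1}$ type bound — more precisely, the standard GCG relation $\eps_k\le \sigma_k$ together with $\sigma_k\le C$ — wait, the cleaner route is: drop $-\delta_k\sigma_k\le -\delta_k\eps_k$ using $\sigma_k\ge\eps_k$ (which holds because $(\bar m,\bar w)$ is feasible for the linearized problem $\mathcal P[\gamma_k]$ and $\mathcal J_2$ is convex), and bound $D_k^2\le C\sigma_k^{4/r}\le C\eps_k^{?}$ — here one must be careful, since one only has $\sigma_k\ge\eps_k$, not an upper bound of $\sigma_k$ by $\eps_k$; the resolution is that $D_k$ can alternatively be estimated by $C\eps_k^{2/r(r-1)}$ directly by interpolating the two bounds in \eqref{LP23_thm_ineq1} and using $\sigma_k\le C$, which is exactly the exponent appearing in \eqref{thm8-2_eq}. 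I would carry out this interpolation explicitly: writing $D_k$ via $\|m_k-\bar m_k\|_{L^1(0,T;L^1)}^{a}\|m_k-\bar m_k\|_{L^2(0,T;L^\infty)}^{b}$ for suitable $a,b$ chosen so that the resulting power of $\eps_k$ (after using \eqref{LP23_thm_ineq1} and \eqref{LP23_thm_ineq4} with $\sigma_k\le C$) equals $2/(r(r-1))$, which fixes the constant $C_\ast$.

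For the second part, I would prove \eqref{thm8-2_eq_2} by induction on $k$, exactly as in the classical Frank--Wolfe analysis. With $\delta_k=k_2/(k+k_1)$, substituting the inductive hypothesis $\eps_k\le N/(k+k_1)^s$ into \eqref{thm8-2_eq} gives
\begin{equation*}
\eps_{k+1}\le \frac{N}{(k+k_1)^s}\left(1-\frac{k_2}{k+k_1}\right) + \frac{C_\ast k_2^2}{(k+k_1)^2}\cdot\frac{N^{2/(r(r-1))}}{(k+k_1)^{2s/(r(r-1))}},
\end{equation*}
and one checks that the right-hand side is $\le N/(k+1+k_1)^s$ provided $s\le k_2$ (so the first term dominates the geometric decay) and provided the exponent $2+2s/(r(r-1))$ of the error term exceeds $s+1$, i.e.\ $s<1/\rho$ with $\rho=1-2/(r(r-1))$ — this is precisely the constraint $s<\min\{k_2,1/\rho\}$ in the statement, and the choice of $N$ is forced by making the induction step hold at $k=0$ and by absorbing the error term, which is where $N=\max\{\eps_0 k_1^s,(C_\ast k_2^2/(k_2-s))^{1/r}\}$ comes from. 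In the case $d=1$ we have $r=2$, so $r(r-1)=2$ and the error exponent is $2/(r(r-1))=1$, meaning $\eps_k^{2/(r(r-1))}=\eps_k$ and \eqref{thm8-2_eq} becomes the genuinely linear recursion $\eps_{k+1}\le(1-\delta_k)\eps_k+C_\ast\delta_k^2\eps_k$; then one can take $s=k_2$ exactly and solve the recursion by the product $\prod_{j=0}^{k-1}(1-\delta_j+C_\ast\delta_j^2)\le \prod(1-\delta_j)\exp(C_\ast\sum\delta_j^2)$, which telescopes to $(k_1/(k+k_1))^{k_2}$ times $\exp(C_\ast k_2^2\sum_{j}(j+k_1)^{-2})\le\exp(C_\ast k_2^2(k_1+1)/k_1^2)$, giving the stated $N$.

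The main obstacle I anticipate is getting the recursion \eqref{thm8-2_eq} with the precise exponent $2/(r(r-1))$ on $\eps_k$ rather than some cruder power. This requires threading together two a priori estimates from Theorem \ref{LP23_thm7} — the $\eps_k^{1/r}$ bound on $\|m_k-\bar m_k\|$-type quantities at the \emph{primal} level \eqref{LP23_thm_ineq1} and the $\sigma_k^{1/r}$ bound \eqref{LP23_thm_ineq4} at the \emph{linearized} level — via an interpolation inequality between $L^1$ and $L^\infty$ (or $L^1(0,T;L^1)$ and $L^2(0,T;L^\infty)$) for $D_k$, and then using only the boundedness $\sigma_k\le C$ (not a power bound) to convert everything into the single power of $\eps_k$. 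Keeping the interpolation exponents consistent with the definition of $D_k$ in \eqref{eq:s3-dk}, and verifying that the resulting exponent is exactly $2/(r(r-1))=1-\rho$, is the delicate bookkeeping step; everything afterward is the standard Frank--Wolfe induction. One also must double-check uniform boundedness of all relevant quantities, but this is already furnished by Theorem \ref{LP23_prop23}(ii) and the final assertion $\eps_k\le C$ of Theorem \ref{LP23_thm7}.
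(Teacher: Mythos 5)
Your overall architecture — the descent inequality from convexity of $\mathcal{J}_1$ plus a Lipschitz remainder for $\mathcal{J}_2$, then dropping $-\delta_k\sigma_k$ to $-\delta_k\eps_k$ via $\eps_k\le\sigma_k$, then the standard Frank--Wolfe induction (product/exponential computation for $d=1$ with $N=\eps_0k_1^{k_2}\exp(C_*k_2^2(k_1+1)/k_1^2)$, and the absorption conditions $s<k_2$, $s\le 1/\rho$, $N\ge \eps_0k_1^s$ for $d\ge 2$) — is the same as the paper's, and the second half of your argument is correct in outline. The genuine gap is in how you produce the factor $\eps_k^{2/(r(r-1))}$ in \eqref{thm8-2_eq}. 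After Proposition \ref{LP23_lem27-1}, the quadratic remainder is $\delta_k^2L_f D_k\le \delta_k^2L_f\|m_k-\bar m_k\|_{L^2(0,T;L^\infty(\T^d))}^2$ (not $D_k^2$; and it comes entirely from the $\mathcal{J}_2$-remainder \eqref{LP23_lem24_ineq1a} — the $\mathcal{J}_1$ part is handled by plain convexity and contributes no second-order term, so the "strong convexity of $L$" plays no role here). You propose to bound this remainder by $C\eps_k^{2/(r(r-1))}$ by interpolating \eqref{LP23_thm_ineq1} and \eqref{LP23_thm_ineq4} together with $\sigma_k\le C$. This cannot work: every available bound on $m_k-\bar m_k$ at that stage is in terms of $\sigma_k$ (namely \eqref{LP23_thm_ineq4} gives $C\sigma_k^{1/r}$, and the trivial unit-mass bound gives only a constant), while the sole relation between $\sigma_k$ and $\eps_k$ you may use is $\eps_k\le\sigma_k$, which points the wrong way. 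No choice of interpolation exponents converts a power of $\sigma_k$ into a power of $\eps_k$, and the reverse inequality $\sigma_k\le C\eps_k^{2/(r(r-1))}$ (i.e.\ \eqref{LP23_lem27-2_ineq3}) is itself a consequence of the very estimate you are missing, so invoking it would be circular.

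The missing ingredient is the estimate \eqref{eq:mkmk}, $\|m_k-\bar m_k\|_{L^2(0,T;L^\infty(\T^d))}\le C\eps_k^{1/(r(r-1))}$, which the paper establishes in Proposition \ref{LP23_lem27-2} by a different mechanism: apply the $r$-th power coercivity of $\mathcal{Z}[\bar\gamma]$ (Proposition \ref{LP23_prop21} with $\widehat\gamma=\bar\gamma$) at $(m_k,w_k)$; split $\mathcal{Z}[\bar\gamma](m_k,w_k)-\mathcal{Z}[\bar\gamma](\bar m,\bar w)$ by adding and subtracting $\bsym{\gamma}[\bar m_k]=\gamma_k$; discard the piece $\mathcal{Z}[\gamma_k](m_k,w_k)-\mathcal{Z}[\gamma_k](\bar m,\bar w)\le 0$ using that $(m_k,w_k)$ minimizes $(\mathcal{P}[\gamma_k])$; and then use \textup{(f-L)} together with \eqref{LP23_thm_ineq1} to obtain $\|m_k-\bar m\|^{r}\le C\|\bar m_k-\bar m\|\,\|m_k-\bar m\|$, hence $\|m_k-\bar m\|\le C\eps_k^{1/(r(r-1))}$, and conclude by the triangle inequality. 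Once this is in hand, your derivation of \eqref{thm8-2_eq} and the subsequent induction go through essentially as you sketched and coincide with the paper's proof.
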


\begin{rem}
We now compare the main results of \cite{LP23} with those of the present paper.
\begin{enumerate}
    \item The estimates in \cite{LP23} do not suffer from dimensional constraints, whereas our results depend on the spatial dimension $d$; in particular, only slow convergence estimates are obtained when the $d$ is greater than or equal to two. 
This technical limitation stems from the fact that the embedding $W^{1, r}(\T^d) \hookrightarrow L^{\infty}(\T^d)$ depends on $d$, and thus improving the estimates appears difficult.
More specifically, the convergence rate given in Theorem \ref{LP23_thm7} corresponds to the rate in \cite[Theorem~7]{LP23}, in which we can take $r=2$ for any $d$.
Regarding adaptive step-size rules, \cite[Theorem~8]{LP23} shows exponential convergence in any $d$, while our result, Theorem \ref{LP23_thm8-1}, only provides a polynomial decay when $d\ge 2$.

\item The errors $\bar{m}_{k}-\bar{m}$, $\gamma_k - \bar{\gamma}$, and ${m}_{k}-\bar{m}_k$ are estimated using the norms of $L^\infty(0,T;L^2(\T^d))$, $L^\infty(Q)$, and $L^\infty(0,T;L^2(\T^d))$, respectively, in \cite[Theorem~7]{LP23}. 
On the other hand, we use the norm of $L^2(0,T;L^\infty(\T^d))$ for estimating these errors. 

\item Furthermore, when the predefined step-sizes is considered, they establish the inequality
$$
\eps_k \le \eps_0 \exp{\left[\sum_{j=0}^{k-1} (C\delta_j^2 - \delta_j)\right]}
$$
in \cite[Theorem~8]{LP23}.  
That is, in their setting, one can always take $s = k_2$ for any $d$.
Of course, since the problem settings are different, a direct comparison should be made with caution.
\end{enumerate}
\end{rem}

\begin{rem} \label{rem_thm8-2}
In Theorem \ref{LP23_thm8-2}, the convergence speed of $\eps_k$ seems to be faster as $k_2$ increases. However, since $k_1$ ($\ge k_2$) also increases, the coefficient $N$ grows rapidly as a result. Therefore, increasing $k_1$ and $k_2$ would not improve the convergence speed. We numerically verify this in Section \ref{sec_numerical_experiment}.
\end{rem}

\section{Proof of Theorem \ref{LP23_prop23}}
\label{sec:proof-wp}

We first recall some preliminary results. 
Following \cite{LP23}, we use the mappings presented below. 

\begin{itemize}
\item For any $\gamma \in \Gamma$, we set $\bsym{u}[\gamma]:=u$, where $u$ denotes the solution of the terminal value problem for the HJB equation
$$
-\del_t u - \nu \Delta{u} + H(t, x, \nabla u) = \gamma\quad \mbox{in }Q , \qquad u(T, \cdot) = g\quad \mbox{on }\mathbb{T}^d.
$$
\item For any $\gamma \in \Gamma$, set $\bsym{v}[\gamma] := - \nabla_p H(t, x, \nabla \bsym{u}[\gamma])$.
\item For any $v \in \Theta$, set $\bsym{M}[v]:=m$, where $m$ denotes the solution of the initial value problem for the FP equation
$$
\del_t m - \nu \Delta{m} + \nabla \cdot (m v) = 0\quad \mbox{in }Q, \qquad m(0, \cdot) = m_0\quad \mbox{on }\mathbb{T}^d.
$$
\item For any $\gamma \in \Gamma$, set $\bsym{m}[\gamma] := \bsym{M}[\bsym{v}[\gamma]]$.
\item For any $\gamma \in \Gamma$, set $\bsym{w}[\gamma] := \bsym{m}[\gamma] \bsym{v}[\gamma]$.
\item For any $m \in W^{1, 2, q}(Q)$, set $\bsym{\gamma}[m] := f(\cdot, \cdot, m)$.
\end{itemize}

Those mappings are actually well-defined and their regularities are summarized as follows. 

\begin{lemma} \label{LP23_lem13}
Let $R > 0$ and let $v \in \Theta$ be such that $\|v\|_{\Theta} \le R$. Then, $\bsym{M}[v] \in W^{1, 2, q}(Q)$ is well-defined. Moreover, we have  $\bsym{M}[v] \ge 0$ and $\displaystyle{\int_{\T^d} \bsym{M}[v](t, x)\, dx = \int_{\T^d} m_0(x)\, dx = 1}$. Furthermore, there exists a constant $C(R) > 0$, independent of $v$, such that $\|\bsym{M}[v]\|_{W^{1, 2, q}(Q)} \le C(R)$. 
\end{lemma}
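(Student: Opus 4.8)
The plan is to treat the Fokker--Planck problem defining $\bsym{M}[v]$ as a linear parabolic Cauchy problem on the torus and to combine $L^q$-parabolic regularity with an energy estimate and the maximum principle, keeping careful track of the dependence of all constants on $R$.

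\emph{Step 1 (existence, uniqueness, and the $W^{1,2,q}$ bound).} First I would rewrite the divergence-form equation $\del_t m - \nu\Delta m + \nabla\cdot(mv) = 0$, $m(0,\cdot)=m_0$, in non-divergence form as $\del_t m - \nu\Delta m + v\cdot\nabla m + (\nabla\cdot v)\,m = 0$, noting that the drift $v$ lies in $L^\infty(Q;\R^d)$ and that the zeroth-order coefficient $c := \nabla\cdot v$ lies in $L^q(Q)$ with $q > d+2$, so in particular $(d+2)/q < 1$; this subcritical integrability of the lower-order coefficients is exactly what $L^q$-maximal regularity requires. Invoking the linear parabolic solvability theory and a priori estimate in this setting (as in \cite{lsu68} and the arguments of \cite{B21}), there is a unique $m=\bsym{M}[v] \in W^{1,2,q}(Q)$, and its norm is bounded by $C\big(\|m_0\|_{W^{2-2/q,q}(\T^d)}\big)$ with $C$ depending only on $\nu$, $d$, $q$, $T$ and the norms $\|v\|_{L^\infty(Q)}$, $\|\nabla\cdot v\|_{L^q(Q)}$; since the latter two are $\le R$ and $m_0$ is fixed by \textup{(TIV)} (and $\mathcal{C}^{2+\alpha_0}(\T^d)\hookrightarrow W^{2-2/q,q}(\T^d)$), this yields $\|\bsym{M}[v]\|_{W^{1,2,q}(Q)} \le C(R)$. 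An alternative is a continuation-in-time fixed-point argument: using $W^{1,2,q}(Q)\hookrightarrow \mathcal{C}(Q)$ from Remark \ref{B21_lem12} one makes sense of $(\nabla\cdot v)\,m \in L^q(Q)$, solves the heat equation with this right-hand side, and closes the estimate with the $L^\infty$ bound from Step 2.

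\emph{Steps 2--3 (nonnegativity and mass conservation).} To show $m\ge 0$, I would test the equation against $m^- := \max\{-m,0\}$, which is admissible since $m$ is regular. Using $m_0\ge 0$ so that $m^-(0,\cdot)=0$, Cauchy--Schwarz and $v\in L^\infty$ on the drift term, and H\"older plus Gagliardo--Nirenberg on $\int_{\T^d}|\nabla\cdot v|\,(m^-)^2\,dx \le \|\nabla\cdot v\|_{L^q(\T^d)}\|m^-\|_{L^{2q/(q-1)}(\T^d)}^2$ (where $2q/(q-1)<2^\ast$ because $q>d+2>d/2$), one absorbs the gradient contributions into $\nu\|\nabla m^-\|_{L^2(\T^d)}^2$ and obtains a Gr\"onwall inequality for $\|m^-(t)\|_{L^2(\T^d)}^2$ with constant depending on $R$; hence $m^-\equiv 0$. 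For the mass, integrating over the boundaryless torus gives $\frac{d}{dt}\int_{\T^d} m(t,x)\,dx = \nu\int_{\T^d}\Delta m\,dx - \int_{\T^d}\nabla\cdot(mv)\,dx = 0$, so $\int_{\T^d}\bsym{M}[v](t,x)\,dx = \int_{\T^d} m_0(x)\,dx = 1$ for all $t$, since $m_0\in\mathcal{D}_1(\T^d)$.

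\emph{Main obstacle.} The delicate point is Step 1: getting the $W^{1,2,q}$ estimate with a constant uniform over $\{\,\|v\|_\Theta\le R\,\}$. Because $\nabla\cdot v$ sits only in $L^q$ and multiplies $m$ rather than its derivatives, it cannot be absorbed as a small perturbation of the heat operator; the clean route is to quote a maximal-regularity statement that already accommodates subcritical lower-order coefficients, while the self-contained alternative is a bootstrap — energy estimate, then De Giorgi--Nash--Moser to bound $\|m\|_{L^\infty(Q)}\le C(R)\|m_0\|_{L^\infty(\T^d)}$, then a parabolic $L^q$ estimate fed by $(\nabla\cdot v)\,m\in L^q(Q)$ — where the $R$-dependence enters through the Gr\"onwall exponent and the Moser iteration constant. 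Everything else (the $m^-$ test, the integration argument) is routine.
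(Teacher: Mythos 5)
Your argument is correct, and it is essentially the standard route: the paper itself gives no proof of this lemma but cites \cite[Lemma 13]{LP23}, whose proof (like yours) rests on rewriting the Fokker--Planck equation in non-divergence form with $b=-v\in L^\infty\subset L^q$ and $c=-\nabla\cdot v\in L^q$ and invoking exactly the $L^q$-parabolic solvability estimate that appears here as Lemma \ref{B21_thm4} (after time reversal), together with a negative-part/maximum-principle argument for $m\ge 0$ and integration over the torus for mass conservation. Your worry in the ``main obstacle'' paragraph is unnecessary, since that maximal-regularity statement already admits zeroth-order coefficients merely in $L^q(Q)$ with norm bounded by $R$; the only detail you glossed over is that in the Gr\"onwall step the coefficient $\|\nabla\cdot v(t)\|_{L^q(\T^d)}$ enters raised to a power that must not exceed $q$ for time-integrability, which indeed holds because $q>d+2$.
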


\begin{lemma}\label{LP23_lem15}
Let $R > 0$. Then, there exists a constant $C(R) > 0$ such that for any $\gamma_1,\gamma_2 \in \Gamma$ with $\|\gamma_1\|_\Gamma,\|\gamma_2\|_\Gamma\le R$,  
$$
\|\bsym{u}[\gamma_2]-\bsym{u}[\gamma_1]\|_{L^{\infty}(Q)} \le C(R) \|\gamma_2-\gamma_1\|_{{L^2(0, T; L^{\infty}(\T^d))}}.
$$
\end{lemma}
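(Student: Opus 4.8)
\textbf{Proof plan for Lemma \ref{LP23_lem15}.}

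The plan is to estimate the difference $z := \bsym{u}[\gamma_2] - \bsym{u}[\gamma_1]$ by writing the PDE it solves and then invoking a maximum principle (comparison) argument, which under the quadratic Hamiltonian \textup{(H)} is especially clean. Subtracting the two HJB equations and using $H(t,x,p) = \tfrac12|p|^2 - h\cdot p$, the function $z$ solves
\begin{equation*}
-\del_t z - \nu\Delta z + b(t,x)\cdot\nabla z = \gamma_2 - \gamma_1 \quad\text{in }Q,\qquad z(T,\cdot) = 0\quad\text{on }\T^d,
\end{equation*}
where $b(t,x) := \tfrac12\big(\nabla\bsym{u}[\gamma_1] + \nabla\bsym{u}[\gamma_2]\big) - h(t,x)$ is a bounded vector field: the bound on $\|b\|_{L^\infty(Q;\R^d)}$ follows from the uniform a priori estimate $\|\bsym{u}[\gamma_i]\|_{W^{1,2,q}(Q)} \le C(R)$ (which, combined with Remark \ref{B21_lem12}, controls $\|\nabla\bsym{u}[\gamma_i]\|_{L^\infty}$) and the regularity of $h$ in \textup{(H)}. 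So $z$ solves a linear backward parabolic equation with bounded drift, zero terminal data, and right-hand side $\gamma_2 - \gamma_1$.

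The main step is then a quantitative estimate of $\|z\|_{L^\infty(Q)}$ in terms of a weak norm of the source. The cleanest route is duality: reverse time, so that $\tilde z(t,x) := z(T-t,x)$ solves a forward parabolic problem with zero initial data, and test against the solution $\mu$ of the adjoint Fokker--Planck-type equation with a prescribed terminal/initial datum; since $\mu$ stays a probability density (nonnegative, mass one — this is exactly the content of Lemma \ref{LP23_lem13} applied to the drift $-b$), one gets, for each fixed $t$,
\begin{equation*}
\Big|\int_{\T^d} z(t,x)\,\psi(x)\,dx\Big| \le \int_t^T \|\gamma_2(s,\cdot) - \gamma_1(s,\cdot)\|_{L^\infty(\T^d)}\,\Big(\int_{\T^d}\mu(s,x)\,dx\Big)\,ds \le \|\gamma_2 - \gamma_1\|_{L^1(0,T;L^\infty(\T^d))}
\end{equation*}
for any probability density $\psi$; taking the supremum over such $\psi$ (equivalently, choosing $\psi$ to be a point mass, approximated by mollifiers) yields the pointwise bound, and then Cauchy--Schwarz in time upgrades $L^1(0,T;L^\infty)$ to $C(R)\,\|\gamma_2 - \gamma_1\|_{L^2(0,T;L^\infty(\T^d))}$, with the $\sqrt{T}$ absorbed into $C(R)$. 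An alternative, avoiding the adjoint altogether, is to apply the weak maximum principle directly: for the backward equation with zero terminal data, $\|z(t,\cdot)\|_{L^\infty(\T^d)} \le \int_t^T \|\gamma_2(s,\cdot)-\gamma_1(s,\cdot)\|_{L^\infty(\T^d)}\,ds$, which follows by comparing $z$ with the (spatially constant) supersolution $\int_t^T\|(\gamma_2-\gamma_1)(s,\cdot)\|_{L^\infty}\,ds$; the same Cauchy--Schwarz step then finishes the proof. Either way the constant $C(R)$ depends on $R$ only through the a priori bound used to define $b$, but in fact the final constant here does not even need the drift bound — it enters only to justify that the comparison/duality argument is legitimate (parabolicity, well-posedness of $\mu$).

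The main obstacle, such as it is, is making the duality/comparison rigorous at the level of $W^{1,2,q}$ solutions rather than classical ones: one must know that $\bsym{u}[\gamma_i]$ is regular enough (e.g. $\nabla\bsym{u}[\gamma_i] \in \mathcal{C}^{\delta}$ via Remark \ref{B21_lem12}) for $b$ to be a genuine bounded continuous coefficient, so that the linear equation for $z$ has a unique $W^{1,2,q}$ solution and the adjoint state $\mu$ exists with the stated nonnegativity and mass-conservation properties (Lemma \ref{LP23_lem13}). Granting the a priori $W^{1,2,q}$ bounds on $\bsym{u}[\gamma_i]$ — which hold under $\|\gamma_i\|_\Gamma \le R$ and are part of the machinery already set up for Theorem \ref{LP23_prop23} — the rest is the routine maximum-principle / Cauchy--Schwarz estimate sketched above.
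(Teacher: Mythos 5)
Your argument is correct and is essentially the proof the paper intends: the paper gives no written proof but defers to [LP23, Lemma 15] "with the norm replaced," and that argument is exactly your linearization of the Hamiltonian difference into a bounded drift $b$ (bounded via Lemma \ref{LP23_lem16} and Remark \ref{B21_lem12}), duality against the adjoint Fokker--Planck density whose nonnegativity and unit mass give the $L^1(0,T;L^\infty)$ bound, and Cauchy--Schwarz in time. Your maximum-principle variant with the spatially constant supersolution is a legitimate (and under \textup{(H)} even simpler) way to run the same estimate, so no gap.
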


\begin{lemma} \label{LP23_lem16}
For any $\gamma \in \Gamma$, $\bsym{u}[\gamma] \in W^{1, 2, q}(Q)$ is well-defined. Moreover, for any $R > 0$, there exists a constant $C(R) > 0$ such that for any $\gamma \in \Gamma$ with $\|\gamma\|_\Gamma\le R$, $\|\bsym{u}[\gamma]\|_{W^{1, 2, q}(Q)} \le C(R)$.
\end{lemma}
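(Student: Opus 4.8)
The plan is to exploit the special structure of the Hamiltonian in (H) via the Cole--Hopf transformation, which is precisely why that structure was imposed. Fix $\gamma\in\Gamma$ with $\|\gamma\|_\Gamma\le R$ and write $u=\bsym{u}[\gamma]$. Since $H(t,x,p)=\tfrac12|p|^2-h(t,x)\cdot p$, the HJB equation $-\partial_t u-\nu\Delta u+\tfrac12|\nabla u|^2-h\cdot\nabla u=\gamma$ with $u(T,\cdot)=g$ is transformed, upon setting $\phi:=\exp(-u/2\nu)$, into a linear (in the principal part) reaction--convection--diffusion equation for $\phi$: a direct computation gives $-\partial_t\phi-\nu\Delta\phi+h\cdot\nabla\phi+\tfrac{1}{2\nu}\gamma\,\phi=0$ in $Q$ with terminal datum $\phi(T,\cdot)=\exp(-g/2\nu)$. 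This equation is now \emph{linear} in $\phi$, with coefficients controlled by $\|h\|_{\mathcal{C}^{1+\alpha_0}}$ (fixed) and $\|\gamma\|_{L^\infty(Q)}\le R$.

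First I would establish two-sided bounds $0<c(R)\le\phi\le C(R)$ on $Q$ by the maximum principle: since $\gamma$ is bounded by $R$ and $g\in\mathcal{C}^{2+\alpha_0}(\T^d)$ is bounded, the zeroth-order term $\tfrac{1}{2\nu}\gamma$ is bounded, so comparing with the explicit super/subsolutions $\exp(\pm Rt/2\nu)\cdot(\text{const})$ (or simply using that a bounded zeroth-order coefficient yields exponential-in-$T$ two-sided bounds from the bounds on the terminal datum) gives a positive lower bound and a finite upper bound, both depending only on $R$, $\nu$, $T$, and $g$. Next, with $\phi$ now known to be bounded above and below and the right-hand side of the $\phi$-equation being $-\tfrac{1}{2\nu}\gamma\phi\in L^q(Q)$ with $L^q$-norm $\le C(R)$ (here $\gamma\in L^\infty\subset L^q$ on the bounded domain $Q$ and $\phi\in L^\infty$), I would invoke the interior-in-time together with terminal parabolic $L^q$ estimate for the linear operator $-\partial_t-\nu\Delta+h\cdot\nabla$: since the terminal datum $\exp(-g/2\nu)$ lies in $\mathcal{C}^{2+\alpha_0}(\T^d)\subset W^{2,q}(\T^d)$ and $h\in\mathcal{C}^{\alpha_0}$, the maximal parabolic regularity (Ladyzhenskaya--Solonnikov--Ural'tseva, as already cited in Remark \ref{B21_lem12}) yields $\|\phi\|_{W^{1,2,q}(Q)}\le C(R)$. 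Finally I would invert the Cole--Hopf map: $u=-2\nu\log\phi$, and since $\phi$ is bounded away from $0$, the chain rule gives $\nabla u=-2\nu\,\nabla\phi/\phi$, $\partial_t u=-2\nu\,\partial_t\phi/\phi$, and $\partial_i\partial_j u=-2\nu(\partial_i\partial_j\phi/\phi-\partial_i\phi\,\partial_j\phi/\phi^2)$; using the lower bound on $\phi$, the $W^{1,2,q}$-bound on $\phi$, and the Sobolev embedding of Remark \ref{B21_lem12} (which makes $\nabla\phi$ bounded, so that the quadratic term $\nabla\phi\,\partial_j\phi/\phi^2$ is controlled in $L^q$), one concludes $\|u\|_{W^{1,2,q}(Q)}\le C(R)$. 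Well-definedness of $\bsym{u}[\gamma]$ (existence and uniqueness) follows the same route: the linear $\phi$-problem is uniquely solvable in $W^{1,2,q}(Q)$, and $u=-2\nu\log\phi$ with $\phi>0$ gives the unique $W^{1,2,q}$ solution of the HJB equation.

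The main obstacle I anticipate is controlling the quadratic term $|\nabla\phi|^2/\phi^2$ arising when transferring regularity from $\phi$ back to $u$: bounding it in $L^q(Q)$ requires both the strictly positive lower bound on $\phi$ (from the maximum principle) \emph{and} the $L^\infty$-bound on $\nabla\phi$ (from the Sobolev embedding $W^{1,2,q}(Q)\hookrightarrow\mathcal{C}^{\delta}(Q)$ of Remark \ref{B21_lem12}, valid since $q>d+2$). Once these two ingredients are in place the estimate on $\partial_i\partial_j u$ closes, and all constants depend only on $R$ (through the bounds on $\gamma$) and on the fixed data $\nu,T,g,h$, exactly as claimed. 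A minor point to be careful about is that the $L^q$ parabolic estimate on $Q=(0,T)\times\T^d$ must be the global (up-to-terminal-time) version, which is available here because the terminal datum $\exp(-g/2\nu)$ is smooth enough; this is why the hypothesis $g\in\mathcal{C}^{2+\alpha_0}(\T^d)$ in (TIV) is used.
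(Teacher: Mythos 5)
Your proof is correct, but it is not the route by which the paper establishes this lemma: the paper does not reprove it, it cites it verbatim from \cite{LP23} (Lemma 16 there), where the bound is obtained for the general Hamiltonians of that work. That argument uses the full strength of the hypothesis $\|\gamma\|_\Gamma\le R$ --- in particular the uniform-in-time spatial Lipschitz bound contained in the $\Gamma$-norm --- to derive $L^\infty$ and Lipschitz estimates on $\bsym{u}[\gamma]$ from the optimal-control representation of the HJB solution, and then closes with the terminal-value parabolic $L^q$ estimate (the analogue of Lemma \ref{B21_thm4}). You instead exploit the specific quadratic-plus-convection structure \textup{(H)} through the Cole--Hopf transformation; this is precisely the mechanism the paper reserves for Proposition \ref{prop:du_k} and Section \ref{sec_MFG_regularity}, where only an $L^\infty$ bound on the coupling is available. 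What your route buys: it needs only $\|\gamma\|_{L^\infty(Q)}\le R$, not the $W^{1,\infty}$ part of the $\Gamma$-norm, and it stays entirely inside the paper's toolkit (maximum principle for the linear $\phi$-equation, Lemma \ref{B21_thm4} for $\|\phi\|_{W^{1,2,q}(Q)}$, and the embedding of Remark \ref{B21_lem12} to control the quadratic term $|\nabla\phi|^2/\phi^2$ when passing back to $u=-2\nu\log\phi$); your treatment of well-posedness by transferring uniqueness through the bijection $u\mapsto\phi$ is also sound. What it gives up is generality: the argument is tied to \textup{(H)}, whereas the cited proof covers the broad Hamiltonian class of \cite{LP23}. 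Two small corrections: the transformed equation should read $\partial_t\phi+\nu\Delta\phi+h\cdot\nabla\phi=\tfrac{1}{2\nu}\gamma\phi$ (your sign on the convection term is flipped; compare \eqref{CH_MFG_eq}), which affects nothing since neither the maximum principle nor the $L^q$ estimate cares about that sign, and the global $L^q$ maximal regularity you invoke is Lemma \ref{B21_thm4}, not Remark \ref{B21_lem12} (the latter is only the Sobolev-type embedding).
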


\begin{lemma} \label{LP23_lem18}
For any $m \in W^{1, 2, q}(Q)$, $\bsym{\gamma}[m] \in \Gamma$ is well-defined. Moreover, there exists a constant $C > 0$ such that for any $m_1,m_2 \in W^{1, 2, q}(Q)$,
\begin{align*}
\|\bsym{\gamma}[m_2]-\bsym{\gamma}[m_1]\|_{{L^2(0, T; L^{\infty}(\T^d))}} &\le C \|m_2 - m_1\|_{{L^2(0, T; L^{\infty}(\T^d))}}.
\end{align*}
\end{lemma}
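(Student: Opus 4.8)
The statement has two parts: that $\bsym{\gamma}[m]:=f(\cdot,\cdot,m)$ genuinely lands in $\Gamma$ for every $m\in W^{1,2,q}(Q)$, and the contraction-type estimate in the $L^2(0,T;L^\infty(\T^d))$ norm. The plan is to treat them separately, using Remark \ref{B21_lem12} to upgrade the Sobolev regularity of $m$ to the pointwise-in-time Hölder and Lipschitz bounds that the structural hypothesis \textup{(f-L)} requires.

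\textbf{Well-definedness.} Fix $m\in W^{1,2,q}(Q)$. By Remark \ref{B21_lem12} there is $R>0$, depending only on $\|m\|_{W^{1,2,q}(Q)}$, with $\|m\|_{\mathcal{C}^{\delta}(Q)}+\|\nabla m\|_{\mathcal{C}^{\delta}(Q;\R^d)}\le R$; in particular $\|\nabla m\|_{L^\infty(Q;\R^d)}\le R$, so for each $t$ the slice $m(t,\cdot)$ is Lipschitz in $x$ with $\operatorname{Lip}_x(m(t,\cdot))\le R$. Then the final clause of \textup{(f-L)} together with \eqref{eq:f-3} gives $\operatorname{Lip}_x(m(t,\cdot);f)\le C(R)$, while \textup{(f-B)} gives $\|f(t,\cdot,m(t,\cdot))\|_{L^\infty(\T^d)}\le C_0$; hence $f(t,\cdot,m(t,\cdot))\in W^{1,\infty}(\T^d)$ with norm $\le C_0+C(R)$ uniformly in $t$, so $\bsym{\gamma}[m]\in L^\infty(0,T;W^{1,\infty}(\T^d))$. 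For continuity on $Q$, given $(t_1,x_1),(t_2,x_2)\in Q$ I would insert $f(t_2,x_1,m(t_2,\cdot))$ and bound
$$|f(t_2,x_2,m(t_2,\cdot))-f(t_1,x_1,m(t_1,\cdot))|\le \operatorname{Lip}_x(m(t_2,\cdot);f)\,|x_2-x_1|+C_0|t_2-t_1|^{\alpha_0}+L_f\|m(t_2,\cdot)-m(t_1,\cdot)\|_{L^\infty(\T^d)},$$
using \eqref{eq:f-3} for the spatial increment and \eqref{eq:f-2} for the rest; since $\|m(t_2,\cdot)-m(t_1,\cdot)\|_{L^\infty(\T^d)}\le R|t_2-t_1|^{\delta}$ and $\operatorname{Lip}_x(m(t_2,\cdot);f)\le C(R)$, the right-hand side vanishes as $(t_2,x_2)\to(t_1,x_1)$, giving joint (Hölder) continuity, hence $\bsym{\gamma}[m]\in\mathcal{C}(Q)$ and therefore $\bsym{\gamma}[m]\in\Gamma$.

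\textbf{Lipschitz estimate.} For $m_1,m_2\in W^{1,2,q}(Q)$ one has $m_i\in\mathcal{C}(Q)\subset L^\infty(Q)\subset L^2(0,T;L^\infty(\T^d))$, so both sides are finite. Applying \eqref{eq:f-2} with the common time $t_1=t_2=t$ gives, for every $x\in\T^d$,
$$|f(t,x,m_2(t,\cdot))-f(t,x,m_1(t,\cdot))|\le L_f\|m_2(t,\cdot)-m_1(t,\cdot)\|_{L^\infty(\T^d)};$$
taking the supremum over $x$, squaring, integrating over $(0,T)$ and taking square roots yields the claim with $C=L_f$.

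The main obstacle is the first part, and within it the uniform-in-$t$ control of the spatial Lipschitz constant of $f(t,\cdot,m(t,\cdot))$: this is exactly where the nonstandard last sentence of \textup{(f-L)} is essential, fed by the embedding $W^{1,2,q}(Q)\hookrightarrow\mathcal{C}^{\delta}(Q)$ together with $\nabla v\in\mathcal{C}^{\delta}(Q;\R^d)$ from Remark \ref{B21_lem12}. The contraction estimate itself is essentially immediate once the function spaces are in place.
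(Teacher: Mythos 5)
Your proof is correct and takes essentially the route the paper intends: the paper itself only cites [LP23, Lemma 18] with the norm replaced by $L^2(0,T;L^\infty(\T^d))$, and your argument (Sobolev embedding of Remark \ref{B21_lem12} to get a uniform-in-$t$ spatial Lipschitz bound on $m(t,\cdot)$, the last clause of \textup{(f-L)} together with \eqref{eq:f-3} and \textup{(f-B)} for membership in $\Gamma$, and \eqref{eq:f-2} at equal times for the estimate with $C=L_f$) is exactly the reconstruction of that proof, matching the reasoning reused in the proof of Theorem \ref{LP23_prop23}(ii). The only loose point — that $f(t,\cdot,m(t,\cdot))$ is literally defined only when $m(t,\cdot)\in\mathcal{D}_1(\T^d)$, not for arbitrary $m\in W^{1,2,q}(Q)$ — is inherited from the paper's own statement and is not a defect of your argument.
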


\begin{lemma} \label{LP23_lem17}
For any $\gamma \in \Gamma$, 
$(\bsym{v}[\gamma], \bsym{m}[\gamma], \bsym{w}[\gamma]) \in \Theta \times W^{1, 2, q}(Q) \times \Theta$ is well-defined. Moreover, for any $R > 0$, there exists a constant $C(R) > 0$ such that for any $\gamma \in \Gamma$ with $\|\gamma\|_\Gamma\le R$,  
$$
\|\bsym{v}[\gamma]\|_{\Theta} + \|\bsym{m}[\gamma]\|_{W^{1, 2, q}(Q)} + \|\bsym{w}[\gamma]\|_{\Theta} \le C(R).
$$
\end{lemma}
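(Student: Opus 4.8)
The plan is simply to chain together the regularity results already recorded for the individual solution maps, tracking the dependence of constants on $R$ at each step. Fix $\gamma\in\Gamma$ with $\|\gamma\|_\Gamma\le R$. By Lemma~\ref{LP23_lem16}, $u:=\bsym{u}[\gamma]\in W^{1,2,q}(Q)$ is well-defined and $\|u\|_{W^{1,2,q}(Q)}\le C(R)$. Under assumption \textup{(H)} one has $\nabla_p H(t,x,p)=p-h(t,x)$, hence $\bsym{v}[\gamma]=-\nabla u+h$. I would then observe that the Sobolev-type embedding of Remark~\ref{B21_lem12} bounds $\|\nabla u\|_{\mathcal{C}^{\delta}(Q;\R^d)}$ by $C\|u\|_{W^{1,2,q}(Q)}\le C(R)$, while $D_x\bsym{v}[\gamma]=-(\partial_i\partial_j u)_{i,j}+D_x h$ is controlled in $L^q(Q;\R^{d\times d})$ using $u\in L^q(0,T;W^{2,q}(\T^d))$ and $h\in\mathcal{C}^{1+\alpha_0}(Q;\R^d)$. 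Combined with the continuity of $\nabla u$ and of $h$, this shows $\bsym{v}[\gamma]\in\Theta$ with $\|\bsym{v}[\gamma]\|_\Theta\le C(R)$.

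Setting $R':=C(R)$, Lemma~\ref{LP23_lem13} applies with $v=\bsym{v}[\gamma]$ and yields that $\bsym{m}[\gamma]=\bsym{M}[\bsym{v}[\gamma]]\in W^{1,2,q}(Q)$ is well-defined, nonnegative, of unit mass, and satisfies $\|\bsym{m}[\gamma]\|_{W^{1,2,q}(Q)}\le C(R')=C(R)$; in particular $\bsym{m}[\gamma]\in L^\infty(Q)$ with the same bound by Remark~\ref{B21_lem12}. Finally, for $\bsym{w}[\gamma]=\bsym{m}[\gamma]\,\bsym{v}[\gamma]$, I would note that the product of two continuous functions on $Q$ is continuous, that $\|\bsym{w}[\gamma]\|_{L^\infty(Q;\R^d)}\le\|\bsym{m}[\gamma]\|_{L^\infty}\|\bsym{v}[\gamma]\|_{L^\infty}\le C(R)$, and that the product rule $\partial_j(\bsym{m}[\gamma]\,v_i)=(\partial_j\bsym{m}[\gamma])\,v_i+\bsym{m}[\gamma]\,\partial_j v_i$ (with $v=\bsym{v}[\gamma]$) places $D_x\bsym{w}[\gamma]$ in $L^q(Q;\R^{d\times d})$, the first term via $\|v\|_{L^\infty}\|\nabla\bsym{m}[\gamma]\|_{L^q(Q)}$ and the second via $\|\bsym{m}[\gamma]\|_{L^\infty}\|D_x v\|_{L^q(Q)}$, both of which are $\le C(R)$. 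Adding the three bounds gives the asserted estimate.

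There is no deep obstacle here: all the hard parabolic estimates are already packaged in Lemmas~\ref{LP23_lem16} and~\ref{LP23_lem13}. The only points needing a little care are invoking the explicit quadratic structure of $H$ from \textup{(H)} (so that $\bsym{v}[\gamma]$ inherits both the $W^{1,2,q}$-derivative bounds on $u$ and the $\mathcal{C}^{1+\alpha_0}$-regularity of $h$), and the product estimate for $D_x\bsym{w}[\gamma]$, where one uses the embedding $W^{1,2,q}(Q)\hookrightarrow\mathcal{C}^{\delta}(Q)$ to put $\bsym{m}[\gamma]$ in $L^\infty(Q)$ while retaining $\|\nabla\bsym{m}[\gamma]\|_{L^q(Q)}\le\|\bsym{m}[\gamma]\|_{W^{1,2,q}(Q)}$. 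Tracking the constants is routine: each use of a prior lemma converts a bound $R$ into $C(R)$, and finitely many such steps still produce a constant depending only on $R$.
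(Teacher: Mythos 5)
Your argument is correct: it chains Lemma \ref{LP23_lem16} (to get $\bsym{u}[\gamma]$ with a $W^{1,2,q}$-bound), the explicit form $\bsym{v}[\gamma]=-\nabla\bsym{u}[\gamma]+h$ under \textup{(H)} together with Remark \ref{B21_lem12} to place $\bsym{v}[\gamma]$ in $\Theta$, then Lemma \ref{LP23_lem13} for $\bsym{m}[\gamma]$ and a product-rule estimate for $\bsym{w}[\gamma]$, with constants degrading only through finitely many $R\mapsto C(R)$ steps. This is essentially the same route as the proof the paper relies on (it simply cites \cite[Lemma~17]{LP23}, which composes the solution maps and uses the same product estimate), so nothing further is needed.
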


\begin{lemma} \label{LP23_lem5}
$\mathcal{R}$ is a convex set. Moreover, for any $v \in \Theta$ and $m = \bsym{M}[v]$, we have $(\bsym{M}[v], v\bsym{M}[v]) \in \mathcal{R}$. 
\end{lemma}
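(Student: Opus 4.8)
The strategy is to verify the defining conditions of $\mathcal{R}$ in \eqref{eq:spaceR} directly. For \emph{convexity}, fix $(m', w'), (m'', w'') \in \mathcal{R}$ and $\theta \in [0,1]$, and put $(m_\theta, w_\theta) := (1-\theta)(m', w') + \theta(m'', w'')$. That $(m_\theta, w_\theta) \in W^{1,2,q}(Q)\times\Theta$, that $\del_t m_\theta - \nu\Delta m_\theta + \nabla\cdot w_\theta = 0$, and that $m_\theta(0) = (1-\theta)m_0 + \theta m_0 = m_0$ are all immediate from the linearity of the function spaces and of the equation. The only point requiring work is to produce $v_\theta \in L^\infty(Q;\R^d)$ with $w_\theta = m_\theta v_\theta$.

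For this I would first record that every $(m,w)\in\mathcal{R}$ satisfies $m\ge 0$ on $Q$: writing $w = mv$ with $v\in L^\infty(Q;\R^d)$, the function $m$ is the (continuous, by Remark \ref{B21_lem12}) solution of the Fokker--Planck equation with bounded drift and nonnegative initial datum $m_0\in\mathcal{D}_1(\T^d)$, so a standard weak maximum principle argument (test the equation against the negative part of $m$ and apply Grönwall's inequality, using $\|v\|_{L^\infty}<\infty$) gives $m\ge 0$; this is also recorded, in the relevant case, in Lemma \ref{LP23_lem13}. Now write $w' = m'v'$, $w'' = m''v''$ with $v',v''\in L^\infty(Q;\R^d)$. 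Since $m_\theta\in W^{1,2,q}(Q)\hookrightarrow\mathcal{C}(Q)$, the set $U:=\{m_\theta>0\}$ is open; define $v_\theta := w_\theta/m_\theta$ on $U$ and $v_\theta := 0$ on $Q\setminus U$. This is consistent because, for $\theta\in(0,1)$, $m_\theta(t,x)=0$ forces $m'(t,x)=m''(t,x)=0$ by nonnegativity, hence $w'(t,x)=m'(t,x)v'(t,x)=0$ and likewise $w''(t,x)=0$, so $w_\theta(t,x)=0$ there (the cases $\theta\in\{0,1\}$ being trivial); thus $w_\theta = m_\theta v_\theta$ on all of $Q$. Moreover $v_\theta$ is measurable, and on $U$ the triangle inequality together with $m',m''\ge 0$ gives
\[
|v_\theta| \;=\; \frac{|(1-\theta)m'v' + \theta m''v''|}{(1-\theta)m' + \theta m''} \;\le\; \frac{(1-\theta)m'|v'| + \theta m''|v''|}{(1-\theta)m' + \theta m''} \;\le\; \max\bigl\{\|v'\|_{L^\infty(Q;\R^d)},\ \|v''\|_{L^\infty(Q;\R^d)}\bigr\},
\]
so $v_\theta\in L^\infty(Q;\R^d)$ and $(m_\theta,w_\theta)\in\mathcal{R}$, proving convexity.

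For the \emph{second assertion}, let $v\in\Theta$ and set $m:=\bsym{M}[v]$. By Lemma \ref{LP23_lem13}, $m\in W^{1,2,q}(Q)$ (with $m\ge 0$ and $\int_{\T^d}m(t,\cdot)\,dx=1$), and by the definition of $\bsym{M}$, $m$ solves $\del_t m-\nu\Delta m+\nabla\cdot(mv)=0$ with $m(0)=m_0$. It remains to check that $mv\in\Theta$. Since $v\in\Theta\subset\mathcal{C}(Q;\R^d)$ and $m\in\mathcal{C}(Q)$ with $\nabla m\in\mathcal{C}^{\delta}(Q;\R^d)\subset L^\infty(Q;\R^d)$ (Remark \ref{B21_lem12}), the product $mv$ lies in $\mathcal{C}(Q;\R^d)\subset L^\infty(Q;\R^d)$, and its Jacobian, with entries $\del_j(mv_i)=m\,\del_j v_i+v_i\,\del_j m$, belongs to $L^q(Q;\R^{d\times d})$ because the first term is the bounded function $m$ times $D_x v\in L^q(Q;\R^{d\times d})$ and the second is a product of bounded functions, hence in $L^\infty(Q)\subset L^q(Q)$ (recall $|Q|=T<\infty$). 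Taking the auxiliary field in \eqref{eq:spaceR} to be $v$ itself, which lies in $L^\infty(Q;\R^d)$ since $\Theta\subset L^\infty(Q;\R^d)$, we conclude $(m,mv)=(\bsym{M}[v],v\bsym{M}[v])\in\mathcal{R}$.

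The only mildly delicate step is the construction of $v_\theta$: there one must use the nonnegativity of the density components to guarantee that $w_\theta$ vanishes wherever $m_\theta$ does, after which the uniform $L^\infty$ bound on $v_\theta$ is the elementary weighted-average estimate displayed above. Everything else reduces to the linear structure of the constraints defining $\mathcal{R}$ and the Sobolev embedding of Remark \ref{B21_lem12}.
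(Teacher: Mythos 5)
Your proof is correct: the linear constraints follow from linearity, the nonnegativity of the densities (via the weak maximum principle, or Lemma \ref{LP23_lem13}) justifies the weighted-average construction of $v_\theta$ with $\|v_\theta\|_{L^\infty}\le\max\{\|v'\|_{L^\infty},\|v''\|_{L^\infty}\}$, and the product-rule computation shows $v\bsym{M}[v]\in\Theta$. The paper itself gives no argument here but simply cites \cite[Lemma 5]{LP23}, and your direct verification is essentially the standard proof behind that citation (only note that the pointwise identity $w'=m'v'$ at a zero of $m_\theta$ should be phrased via the continuity of $w'$, $m'$ and the a.e.\ bound $|w'|\le m'\|v'\|_{L^\infty}$, which is a cosmetic fix).
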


See \cite[Lemmas 3, 5, 13-18]{LP23} for the proofs of the above six lemmas. In contrast to \cite[Lemmas 15 and 18]{LP23}, we have replaced the norm $\|\cdot\|_{L^{\infty}(0, T; L^2(\T^d))}$ with $\|\cdot\|_{L^2(0, T; L^{\infty}(\T^d))}$ in Lemmas \ref{LP23_lem15} and \ref{LP23_lem18}. But we can prove them in exactly the same way. 

\begin{rem}
Although it is shown in \cite[Lemma 18]{LP23} that $\|\bsym{\gamma}[m]\|_{\Gamma}\le C$ for $m\in W^{2,1,q}(Q)$, we cannot prove it under our assumption \textup{(f-B)} and \textup{(f-L)} on $f$ at present. 
\end{rem}

We further recall the following results from \cite[Lemma 19]{LP23} and \cite[Theorems 4 and 7]{B21}. 

\begin{lemma} \label{LP23_lem19}
Let $\widehat{\gamma} \in \Gamma$ and define $\widehat{v} := \bsym{v}[\widehat{\gamma}], \widehat{m} := \bsym{m}[\widehat{\gamma}]$, and $\widehat{w} := \bsym{w}[\widehat{\gamma}]$. Then, for any $(m, w) \in \mathcal{R}$,
$$
\frac{1}{2C_0} \iint_Q |v-\widehat{v}|^2 m \, dx\, dt \le \mathcal{Z}[\widehat{\gamma}](m, w) - \mathcal{Z}[\widehat{\gamma}](\widehat{m}, \widehat{w}),
$$
where $v \in L^{\infty}(Q)$ is defined as a function satisfying $w = mv$.
\end{lemma}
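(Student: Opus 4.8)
The plan is to carry out the classical duality (``verification'') argument for the partially linearized functional $\mathcal{Z}[\widehat{\gamma}]$, which identifies $(\widehat{m},\widehat{w})$ as its minimizer over $\mathcal{R}$, and then to upgrade the resulting equality into a quantitative estimate using the uniform convexity of $L$ recorded in Remark \ref{prop:L_and_H}. Fix $(m,w)\in\mathcal{R}$ and write $w=mv$ with $v\in L^{\infty}(Q;\R^d)$. By Lemma \ref{LP23_lem5} the pair $(\widehat{m},\widehat{w})=(\bsym{m}[\widehat{\gamma}],\bsym{w}[\widehat{\gamma}])$ also lies in $\mathcal{R}$, and $\widehat{u}:=\bsym{u}[\widehat{\gamma}]\in W^{1,2,q}(Q)$ solves the terminal value problem for the HJB equation with right-hand side $\widehat{\gamma}$. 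Since $q>d+2$, Remark \ref{B21_lem12} gives $m,\widehat{u}\in\mathcal{C}^{\delta}(\bar{Q})$ with H\"older gradients, so the traces $m(0),m(T),\widehat{u}(0)$ make classical sense and all the products below are integrable. Testing the Fokker--Planck equation $\partial_t m-\nu\Delta m+\nabla\cdot(mv)=0$ against $\widehat{u}$, integrating by parts in space and time over $Q$ (no boundary contributions on $\T^d$), and substituting $\partial_t\widehat{u}+\nu\Delta\widehat{u}=H(t,x,\nabla\widehat{u})-\widehat{\gamma}$ from the HJB equation, I obtain the identity
\begin{equation*}
\int_{\T^d} g\,m(T)\,dx + \iint_Q \widehat{\gamma}\,m\,dxdt = \int_{\T^d} m_0\,\widehat{u}(0)\,dx + \iint_Q m\,[\,H(t,x,\nabla\widehat{u}) + v\cdot\nabla\widehat{u}\,]\,dxdt .
\end{equation*}

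Adding $\iint_Q m\,L(t,x,v)\,dxdt$ to both sides and recalling the definitions of $\mathcal{Z}[\widehat{\gamma}]$ and $\mathcal{J}_1$ yields
\begin{equation*}
\mathcal{Z}[\widehat{\gamma}](m,w) = \int_{\T^d} m_0\,\widehat{u}(0)\,dx + \iint_Q m\,[\,L(t,x,v) + v\cdot\nabla\widehat{u} + H(t,x,\nabla\widehat{u})\,]\,dxdt .
\end{equation*}
The key observation is that this representation applies verbatim to $(\widehat{m},\widehat{w})$ with velocity $\widehat{v}=-\nabla_p H(t,x,\nabla\widehat{u})$, and that $\widehat{v}$ is precisely the maximizer in the Legendre transform $H(t,x,p)=\sup_{v'}[-p\cdot v'-L(t,x,v')]$, so that the bracket vanishes pointwise when $v=\widehat{v}$; hence $\mathcal{Z}[\widehat{\gamma}](\widehat{m},\widehat{w})=\int_{\T^d} m_0\,\widehat{u}(0)\,dx$. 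Subtracting the two representations gives
\begin{equation*}
\mathcal{Z}[\widehat{\gamma}](m,w) - \mathcal{Z}[\widehat{\gamma}](\widehat{m},\widehat{w}) = \iint_Q m\,[\,L(t,x,v) + v\cdot\nabla\widehat{u} + H(t,x,\nabla\widehat{u})\,]\,dxdt .
\end{equation*}

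It then remains to bound the integrand from below. Using the maximizer identity $H(t,x,\nabla\widehat{u})=-\nabla\widehat{u}\cdot\widehat{v}-L(t,x,\widehat{v})$ together with the first-order condition $\nabla_v L(t,x,\widehat{v})=-\nabla\widehat{u}$, the bracket becomes $L(t,x,v)-L(t,x,\widehat{v})-\nabla_v L(t,x,\widehat{v})\cdot(v-\widehat{v})$, which by the uniform convexity estimate for $L$ in Remark \ref{prop:L_and_H} is $\ge\frac{1}{2C_0}|v-\widehat{v}|^2$. Since $m\ge0$ a.e.\ (membership in $\mathcal{R}$, cf.\ Lemma \ref{LP23_lem13}), multiplying by $m$ and integrating over $Q$ yields the asserted inequality; on the set $\{m=0\}$ the integrand contributes nothing, consistently with the convention $mL(w/m)=0$ there.

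The main technical point to be careful about is the weak-formulation identity in the first step: one must justify the spatial and temporal integrations by parts for functions merely of class $W^{1,2,q}(Q)$ with $w\in\Theta$, and in particular give meaning to the time traces $m(0),m(T),\widehat{u}(0)$. This is exactly where the embedding $W^{1,2,q}(Q)\hookrightarrow\mathcal{C}^{\delta}(\bar{Q})$ from Remark \ref{B21_lem12} and the structure of $\mathcal{R}$ enter; once these are in hand the computation is routine, if slightly lengthy, and everything else (the Legendre duality identities and the pointwise convexity bound) is elementary.
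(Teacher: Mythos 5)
Your verification argument is correct and is essentially the standard duality computation behind this lemma, which the paper itself does not reprove but simply cites from \cite[Lemma 19]{LP23}: test the Fokker--Planck equation with $\bsym{u}[\widehat{\gamma}]$, use the HJB equation together with the Legendre identity $L(t,x,\widehat{v})+\widehat{v}\cdot\nabla\widehat{u}+H(t,x,\nabla\widehat{u})=0$ to get the representation of $\mathcal{Z}[\widehat{\gamma}]$, and conclude via the strong convexity of $L$; the regularity $m,\widehat{u}\in W^{1,2,q}(Q)$ with $q>d+2$ indeed justifies the integrations by parts and traces as you indicate, and $m\ge 0$ for $(m,w)\in\mathcal{R}$ is used exactly as in the paper's proof of Proposition \ref{LP23_lem6}. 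The only cosmetic point is the constant: your bound naturally comes out as $1/(2C)$ with $C$ the convexity constant of Remark \ref{prop:L_and_H} (in fact exactly $1/2$ for the quadratic $L$ arising from \textup{(H)}), which is how the $C_0$ in the statement should be understood, rather than as the bound on $f$ from \textup{(f-B)}.
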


\begin{lemma} 
\label{B21_thm4}
Let $R>0$ and suppose that 
$u_T\in W^{2,2-\frac{2}{q}}(\mathbb{T}^d)$, 
$b\in L^q(Q;\mathbb{R}^d)$, $c\in L^q(Q)$ and $w\in L^q(Q)$ satisfies
$$
\|u_T\|_{W^{2-\frac{2}{q}, q}(\T^d)} \le R, \quad \|b\|_{L^q(Q; \R^d)} \le R, \quad \|c\|_{L^q(Q)} \le R, \quad \|w\|_{L^q(Q)} \le R.
$$
Then, the terminal value problem 
\begin{equation} \label{parabolic_eq1}
\del_t u + \nu \Delta{u} + b \cdot \nabla{u} + c u = w \quad \mbox{in } Q, \qquad u(\cdot,T) = u_T \quad  \mbox{on } \T^d
\end{equation}
has a unique solution $u \in W^{1, 2, q}(Q)$ and there exists $C(R)>0$ such that $\|u\|_{W^{1, 2, q}(Q)} \le C(R)$.
\end{lemma}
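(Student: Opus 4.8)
The plan is to regard \eqref{parabolic_eq1} as a lower-order perturbation of the heat equation, apply $L^q$ parabolic maximal regularity, and close the argument by the method of continuity. First I would reverse time, setting $\widetilde{u}(t,x):=u(T-t,x)$; since the principal part of \eqref{parabolic_eq1} is $\del_t u+\nu\Delta u$ with data prescribed at $t=T$, this turns \eqref{parabolic_eq1} into the forward problem
\[
\del_t\widetilde{u}-\nu\Delta\widetilde{u}=\widetilde{b}\cdot\nabla\widetilde{u}+\widetilde{c}\,\widetilde{u}-\widetilde{w}\ \ \text{in }Q,\qquad \widetilde{u}(\cdot,0)=u_T\ \ \text{on }\T^d,
\]
where $\widetilde{b}(t,x)=b(T-t,x)$ and $\widetilde{c},\widetilde{w}$ are defined analogously, with the same $L^q$-bounds by $R$. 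It then suffices to treat this forward problem. The cornerstone is the classical maximal $L^q$-regularity for $\del_t-\nu\Delta$ on $\T^d$ (see, e.g., \cite[Ch.~IV]{lsu68}): for any $F\in L^q(Q)$ and $u_0\in W^{2-\frac{2}{q},q}(\T^d)$ the problem $\del_t z-\nu\Delta z=F$, $z(\cdot,0)=u_0$, has a unique solution $z\in W^{1,2,q}(Q)$ with $\|z\|_{W^{1,2,q}(Q)}\le C_0\bigl(\|F\|_{L^q(Q)}+\|u_0\|_{W^{2-\frac{2}{q},q}(\T^d)}\bigr)$, the constant $C_0$ depending only on $\nu,q,d,T$.

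Next I would invoke the method of continuity along the family $L_s\widetilde{u}:=\del_t\widetilde{u}-\nu\Delta\widetilde{u}-s(\widetilde{b}\cdot\nabla\widetilde{u}+\widetilde{c}\,\widetilde{u})$, $s\in[0,1]$: the pair $(L_0,\ \text{trace at }t=0)$ is an isomorphism of $W^{1,2,q}(Q)$ onto $L^q(Q)\times W^{2-\frac{2}{q},q}(\T^d)$ by the above, $(L_1,\text{trace})$ is the operator we must invert, and it remains to establish an a priori bound uniform in $s$. Writing $L_s\widetilde{u}=g$ as $\del_t\widetilde{u}-\nu\Delta\widetilde{u}=g+s(\widetilde{b}\cdot\nabla\widetilde{u}+\widetilde{c}\,\widetilde{u})$ and applying maximal regularity together with H\"older's inequality gives $\|\widetilde{u}\|_{W^{1,2,q}(Q)}\le C_0(\|g\|_{L^q}+\|u_0\|_{W^{2-2/q,q}})+C_0R(\|\nabla\widetilde{u}\|_{L^\infty(Q;\R^d)}+\|\widetilde{u}\|_{L^\infty(Q)})$. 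Since $q>d+2$, the embedding of Remark~\ref{B21_lem12} is in fact compact into $\mathcal{C}^1(\overline{Q})$, so Ehrling's lemma yields, for every $\varepsilon>0$, a constant $C(\varepsilon)$ with $\|\nabla\widetilde{u}\|_{L^\infty}+\|\widetilde{u}\|_{L^\infty}\le\varepsilon\|\widetilde{u}\|_{W^{1,2,q}(Q)}+C(\varepsilon)\|\widetilde{u}\|_{L^q(Q)}$; choosing $\varepsilon$ with $C_0R\varepsilon\le\tfrac12$ and absorbing reduces everything to bounding the ``low norm'' $\|\widetilde{u}\|_{L^q(Q)}$.

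For that low norm I would use the parabolic De Giorgi--Nash--Moser / Aronson--Serrin estimate: because $\widetilde{b},\widetilde{c}\in L^q(Q)$ with $q>d+2$ --- which is exactly the Aronson--Serrin integrability condition $\tfrac{d}{q}+\tfrac{2}{q}<1$ --- one obtains $\|\widetilde{u}\|_{L^\infty(Q)}\le C(R)\bigl(\|g\|_{L^q(Q)}+\|u_0\|_{L^\infty(\T^d)}\bigr)$, and $\|u_0\|_{L^\infty(\T^d)}\le C\|u_0\|_{W^{2-2/q,q}(\T^d)}$ by Sobolev embedding (valid since $2-\tfrac{2}{q}>\tfrac{d}{q}$ when $q>d+2$). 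Combining these inequalities gives the desired uniform a priori bound $\|\widetilde{u}\|_{W^{1,2,q}(Q)}\le C(R)(\|g\|_{L^q}+\|u_0\|_{W^{2-2/q,q}})$; the method of continuity then makes $(L_1,\text{trace})$ an isomorphism, producing a unique $\widetilde{u}\in W^{1,2,q}(Q)$ with $\|\widetilde{u}\|_{W^{1,2,q}(Q)}\le C(R)$, and undoing the time reversal yields the solution $u$ of \eqref{parabolic_eq1} with the stated estimate; uniqueness follows by applying the a priori bound to the difference of two solutions, which carries zero data.

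The main obstacle is the first-order term $\widetilde{b}\cdot\nabla\widetilde{u}$ with a merely $L^q$ (not $L^\infty$) coefficient: maximal regularity alone only estimates $\|\widetilde{u}\|_{W^{1,2,q}}$ in terms of itself, so the perturbation has to be split into a ``high'' part handled by interpolation-and-absorption and a ``low'' part $\|\widetilde{u}\|_{L^q(Q)}$ handled by the De Giorgi--Moser machinery. It is precisely the standing hypothesis $q>d+2$ that simultaneously supplies the slack for the interpolation inequality (so that $W^{1,2,q}(Q)$ embeds, with a derivative, into a H\"older space, cf.\ Remark~\ref{B21_lem12}) and meets the Aronson--Serrin threshold needed for the $L^\infty$ bound; weakening it would break both ingredients at once.
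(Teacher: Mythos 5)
Your argument is sound, but note that the paper does not prove this lemma at all: consistently with its stated convention that \emph{Lemmas} are quoted results, it simply cites \cite[Theorem 4]{B21}. Your proposal therefore supplies a self-contained proof where the paper has none, and the route you take is the natural one: time reversal, maximal $L^q$ parabolic regularity for $\del_t-\nu\Delta$ on the torus, and the method of continuity, with the lower-order terms $b\cdot\nabla u$ and $cu$ (coefficients only in $L^q$) handled by H\"older plus an Ehrling-type interpolation based on the embedding of Remark~\ref{B21_lem12}, and the remaining low norm controlled by an Aronson--Serrin/De~Giorgi--Moser global $L^\infty$ bound, for which $q>d+2$ is exactly the right threshold; the exponent bookkeeping (including $W^{2-\frac{2}{q},q}(\T^d)\hookrightarrow L^\infty(\T^d)$) is correct, and uniqueness from the a priori estimate is fine. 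Two small remarks: the compactness you invoke is into the space of functions $v$ with $v$ and $\nabla_x v$ continuous (that is what Remark~\ref{B21_lem12} plus Arzel\`a--Ascoli gives), not into $\mathcal{C}^1(\overline{Q})$ with a time derivative, though this is all you actually use; and the De~Giorgi--Moser black box is heavier than necessary --- since $\|b\|_{L^q((t_0,t_1)\times\T^d)}$ and $\|c\|_{L^q((t_0,t_1)\times\T^d)}$ tend to zero as $t_1-t_0\to 0$, one can absorb the perturbation directly from maximal regularity on short time subintervals and iterate over finitely many of them (or run a Gronwall-type energy argument, which closes precisely when $q\ge d+2$), which is the more elementary and presumably closer to the argument in \cite{B21}. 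What each approach buys: yours isolates a single global a priori estimate valid uniformly in the continuity parameter, at the cost of invoking the boundedness theory for weak solutions; the subinterval version needs only maximal regularity and trace estimates.
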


\begin{lemma} \label{B21_thm7}
Let $R>0$ and suppose that 
$u_T\in \mathcal{C}^{2+\beta_0}(\T^d)$, 
$b\in \mathcal{C}^{\frac{\beta_0}{2}, \beta_0}(Q; \R^d)$, $c\in \mathcal{C}^{\frac{\beta_0}{2}, \beta_0}(Q)$ and $w\in \mathcal{C}^{\frac{\beta_0}{2}, \beta_0}(Q)$ satisfies
$$
\|u_T\|_{\mathcal{C}^{2+\beta_0}(\T^d)} \le R,\quad \|b\|_{\mathcal{C}^{\frac{\beta_0}{2}, \beta_0}(Q; \R^d)}\le R,\quad \|c\|_{\mathcal{C}^{\frac{\beta_0}{2}, \beta_0}(Q)} \le R, \quad \|w\|_{\mathcal{C}^{\frac{\beta_0}{2}, \beta_0}(Q)} \le R
$$
for some $\beta_0\in (0,1)$. 
Then, there exist constants $\beta=\beta(\beta_0) \in (0, 1)$ and $C=C(\beta_0, R) > 0$, and the terminal value problem  
\eqref{parabolic_eq1} has a unique solution $u \in \mathcal{C}^{1 + \beta/2, 2 + \beta}(Q)$. Moreover, we have $\|u\|_{\mathcal{C}^{1 + \beta/2, 2 + \beta}(Q)} \le C$.
\end{lemma}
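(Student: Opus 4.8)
The plan is to first reduce \eqref{parabolic_eq1} to a forward parabolic initial value problem by the time reversal $t\mapsto T-t$ (admissible since $\nu>0$), and then to obtain the classical solution in two stages: a solution in $W^{1,2,q}(Q)$, unique and bounded in terms of $R$, followed by a bootstrap to $\mathcal{C}^{1+\beta/2,2+\beta}$ regularity using the Sobolev-type embedding of Remark~\ref{B21_lem12} and the classical parabolic Schauder estimate for the heat operator. An alternative, even shorter route is to invoke the variable-coefficient parabolic Schauder theory directly (e.g., the theory of Ladyzhenskaya--Solonnikov--Ural'tseva), which delivers existence, uniqueness, and the a priori bound in one step with $\beta=\beta_0$; no boundary compatibility conditions enter because $\T^d$ has no boundary. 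I present the two-stage version since it relies only on tools already in place.

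\emph{Stage 1: a $W^{1,2,q}$ solution.} Because $q>d+2$ is fixed, one has the embeddings $\mathcal{C}^{2+\beta_0}(\T^d)\hookrightarrow W^{2-2/q,q}(\T^d)$ and $\mathcal{C}^{\beta_0/2,\beta_0}(Q)\hookrightarrow L^q(Q)$ with constants depending only on $q,d,T$; hence the $L^q$-framework norms of $u_T,b,c,w$ are all controlled by $C(R)$. Lemma~\ref{B21_thm4} then yields a unique $u\in W^{1,2,q}(Q)$ solving \eqref{parabolic_eq1} with $\|u\|_{W^{1,2,q}(Q)}\le C(R)$. Since $\mathcal{C}^{1+\beta/2,2+\beta}(Q)\subset W^{1,2,q}(Q)$, uniqueness in the class claimed in the statement follows from this.

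\emph{Stage 2: bootstrap.} Remark~\ref{B21_lem12} supplies $\delta\in(0,1)$, depending only on $q,d$, with $\|u\|_{\mathcal{C}^{\delta}(Q)}+\|\nabla u\|_{\mathcal{C}^{\delta}(Q;\R^d)}\le C\|u\|_{W^{1,2,q}(Q)}\le C(R)$; in particular $u$ and $\nabla u$ lie in the parabolic Hölder space $\mathcal{C}^{\delta/2,\delta}(Q)$ with $C(R)$ bounds. Set $\beta:=\min\{\beta_0,\delta\}\in(0,1)$, which depends only on $\beta_0$ and the fixed $q,d$, and rewrite \eqref{parabolic_eq1} as
\[
\del_t u+\nu\Delta u=\tilde w:=w-b\cdot\nabla u-cu\quad\text{in }Q,\qquad u(\cdot,T)=u_T\quad\text{on }\T^d.
\]
Products of bounded functions in $\mathcal{C}^{\beta/2,\beta}(Q)$ stay in $\mathcal{C}^{\beta/2,\beta}(Q)$, and $b,c\in\mathcal{C}^{\beta_0/2,\beta_0}(Q)\subset\mathcal{C}^{\beta/2,\beta}(Q)$ while $u,\nabla u\in\mathcal{C}^{\delta/2,\delta}(Q)\subset\mathcal{C}^{\beta/2,\beta}(Q)$; hence $\tilde w\in\mathcal{C}^{\beta/2,\beta}(Q)$ with $\|\tilde w\|_{\mathcal{C}^{\beta/2,\beta}(Q)}\le C(\beta_0,R)$. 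The classical Schauder estimate for the heat equation on $\T^d$ with terminal datum $u_T\in\mathcal{C}^{2+\beta_0}\subset\mathcal{C}^{2+\beta}$ then gives $u\in\mathcal{C}^{1+\beta/2,2+\beta}(Q)$ together with
\[
\|u\|_{\mathcal{C}^{1+\beta/2,2+\beta}(Q)}\le C\bigl(\|\tilde w\|_{\mathcal{C}^{\beta/2,\beta}(Q)}+\|u_T\|_{\mathcal{C}^{2+\beta}(\T^d)}\bigr)\le C(\beta_0,R),
\]
where $\|u_T\|_{\mathcal{C}^{2+\beta}(\T^d)}\le C\|u_T\|_{\mathcal{C}^{2+\beta_0}(\T^d)}\le CR$ since $\beta\le\beta_0$. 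This is the assertion.

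This lemma is classical and there is no genuine obstacle; the points requiring a little care are that the final exponent $\beta$ must depend only on $\beta_0$ and the fixed parameters $q,d,T$ (never on $R$), the precise chain of Sobolev and Hölder embeddings, and the product rule for parabolic Hölder norms. If one wishes, a further iteration — using that after Stage 2 the terms $b\cdot\nabla u$ and $cu$ are more regular — recovers $\beta=\beta_0$, but the statement does not require it.
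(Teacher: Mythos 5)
Your proposal is correct, but it takes a different route from the paper: in the paper this statement is not proved at all, it is simply recalled as \cite[Theorem 7]{B21}, i.e.\ the variable-coefficient parabolic Schauder theory of Ladyzhenskaya--Solonnikov--Ural'tseva applied (after time reversal) to the terminal value problem \eqref{parabolic_eq1}, which is exactly your ``shorter route'' and yields $\beta=\beta_0$ directly. Your two-stage argument is a legitimate alternative that stays within the toolkit already set up in the paper: Stage~1 (embedding the H\"older data into the $L^q$ framework and invoking Lemma~\ref{B21_thm4}) gives existence, uniqueness and the $W^{1,2,q}$ bound; Stage~2 uses Remark~\ref{B21_lem12} to place $u$ and $\nabla u$ in $\mathcal{C}^{\beta/2,\beta}(Q)$ with $\beta=\min\{\beta_0,\delta\}$, moves $b\cdot\nabla u+cu$ to the right-hand side, and applies the constant-coefficient Schauder estimate on the boundaryless torus. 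The chain of embeddings, the product rule for parabolic H\"older norms, and the observation that $\beta$ depends only on $\beta_0$ and the fixed $q,d$ are all handled correctly, and uniqueness in the classical class follows from the $W^{1,2,q}$ uniqueness as you note. What the citation buys is the sharper exponent $\beta=\beta_0$ and brevity; what your bootstrap buys is self-containedness, relying only on the $L^q$ maximal regularity lemma, the Sobolev-type embedding, and the heat-kernel Schauder estimate rather than the full variable-coefficient Schauder machinery. Since the statement only asks for some $\beta=\beta(\beta_0)\in(0,1)$, the weaker exponent is harmless.
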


\medskip

Having completed the above preparations, we are ready to proceed to the proof of Theorem \ref{LP23_prop23}. 

\begin{proof}[Proof of Theorem  \ref{LP23_prop23}, \textup{(i)}]
It is a direct consequence of Lemmas \ref{LP23_lem13}, \ref{LP23_lem15}, \ref{LP23_lem16}, and \ref{LP23_lem17}. 
We note that the initial guess $(\bar{m}_0,\bar{w}_0)$ with setting $\bar{w}_0=v\bar{m}_0$ described in Remark \ref{rem:m0} is an element of the set $\mathcal{R}$. 
In fact, according to Lemma \ref{B21_thm7} (we interpret $t$ as $-t$), there exists $\beta=\beta(\alpha_0)\in (0,1)$ and $\bar{m}_0 \in \mathcal{C}^{1 + \beta/2, 2 + \beta}(Q)$ satisfying 
\[
\del_t \bar{m}_0 - \nu \Delta{\bar{m}_0} +v\cdot \nabla\bar{m}_0+(\nabla\cdot v)\bar{m}_0 = 0\quad \mbox{in }Q,\qquad  \bar{m}_0(\cdot,T) = m_0\quad\mbox{on }\T^d.
\]
This implies $(\bar{m}_0,\bar{w}_0)\in\mathcal{R}$, since $\bar{m}_0$ and $\bar{w}_0$ obviously belong to $W^{1,2,q}(Q)$ and $\Theta$, respectively.  
Particularly, we can take $v=0$. That is, we can take $(\bar{m}_0,0)\in\mathcal{R}$ as the initial guess as stated in the theorem, where $\bar{m}_0$ solves the above initial value problem with $v=0$. 
\end{proof}

The following proposition plays a crucial role in proving part (ii). 

\begin{prop}
\label{prop:du_k}
Under the assumptions of Theorem \ref{LP23_prop23}, the sequence $u_k$ defined there satisfies
\begin{equation}
\label{eq:bdd-gradu}
\|\nabla u_k\|_{L^\infty(Q;\mathbb{R}^d)}\le C.  
\end{equation}
\end{prop}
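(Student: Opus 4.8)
The plan is to exploit the quadratic structure \textup{(H)} of the Hamiltonian via the Cole--Hopf transformation, which converts the nonlinear HJB problem \eqref{GCGa}--\eqref{GCGb} into a \emph{linear} parabolic equation whose coefficients are controlled, uniformly in $k$, by the \emph{mere boundedness} \textup{(f-B)} of $f$; this is the crux, since at this stage we do not yet have a $k$-uniform bound on $\|\gamma_k\|_{\Gamma}$ and hence cannot apply Lemma \ref{LP23_lem16} uniformly. Set $\phi_k := \exp(-u_k/(2\nu))$. Since $u_k \in W^{1,2,q}(Q)$ embeds into $\mathcal{C}^{\delta}(Q)$ with $\nabla u_k \in \mathcal{C}^{\delta}(Q;\R^d)$ (Remark \ref{B21_lem12}), the function $\phi_k$ is a strictly positive element of $W^{1,2,q}(Q)$, and a direct computation using \eqref{eq:quadH} shows that the term $\frac{1}{2}|\nabla u_k|^2$ cancels and
\begin{equation*}
\del_t \phi_k + \nu\Delta\phi_k + h\cdot\nabla\phi_k - \frac{1}{2\nu}\gamma_k\,\phi_k = 0 \quad\text{in } Q, \qquad \phi_k(T,\cdot) = \exp(-g/(2\nu)) \quad\text{on }\T^d,
\end{equation*}
with $\gamma_k = f(\cdot,\cdot,\bar{m}_k)$ as in part \textup{(i)}. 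By the chain rule, $\nabla u_k = -2\nu\,\nabla\phi_k/\phi_k$, so it suffices to bound $\phi_k$ from below by a positive constant and $\nabla\phi_k$ from above in $L^\infty$, both uniformly in $k$.

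First I would derive uniform two-sided bounds $0 < c_\ast \le \phi_k \le C_\ast$ on $Q$. Reversing time ($t \mapsto T-t$) turns the displayed equation into a forward uniformly parabolic equation on $\T^d$ whose zeroth-order coefficient has absolute value $\le C_0/(2\nu)$ by \textup{(f-B)}, and whose initial datum $\exp(-g/(2\nu))$ lies between $\exp(-\|g\|_{L^\infty(\T^d)}/(2\nu))$ and $\exp(\|g\|_{L^\infty(\T^d)}/(2\nu))$ by \textup{(TIV)}. Comparing with the spatially homogeneous barriers $t \mapsto c_\ast e^{-bt}$ and $t \mapsto C_\ast e^{bt}$, $b := C_0/(2\nu)$ (after the standard exponential change of unknown that makes the zeroth-order term nonnegative so that the comparison principle applies) yields the claim, with $c_\ast, C_\ast$ depending only on $\|g\|_{L^\infty(\T^d)}$, $C_0$, $\nu$ and $T$, hence independent of $k$.

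Next I would upgrade this to a uniform $W^{1,2,q}(Q)$ estimate on $\phi_k$ by invoking Lemma \ref{B21_thm4} applied to the displayed equation with $b = h$, $c = -\gamma_k/(2\nu)$, $w = 0$ and terminal datum $\exp(-g/(2\nu))$. Indeed $\|h\|_{L^q(Q;\R^d)} \le C$ by \textup{(H)}; $\|\gamma_k/(2\nu)\|_{L^q(Q)} \le C_0 |Q|^{1/q}/(2\nu)$ by \textup{(f-B)}; and $\exp(-g/(2\nu)) \in \mathcal{C}^{2+\alpha_0}(\T^d) \subset W^{2-2/q,q}(\T^d)$ with a norm independent of $k$ by \textup{(TIV)}, so the constant $R$ in Lemma \ref{B21_thm4} can be taken independent of $k$, giving $\|\phi_k\|_{W^{1,2,q}(Q)} \le C$. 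Remark \ref{B21_lem12} then gives $\|\nabla\phi_k\|_{L^\infty(Q;\R^d)} \le C\|\phi_k\|_{W^{1,2,q}(Q)} \le C$.

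Finally, combining $\phi_k \ge c_\ast$ with $\|\nabla\phi_k\|_{L^\infty(Q;\R^d)} \le C$ in $\nabla u_k = -2\nu\,\nabla\phi_k/\phi_k$ yields $\|\nabla u_k\|_{L^\infty(Q;\R^d)} \le 2\nu C/c_\ast$, which is \eqref{eq:bdd-gradu}. The main obstacle --- and the whole reason the special form \textup{(H)} is imposed --- is the very first step: for a general smooth Hamiltonian one cannot linearize the HJB equation, and since \textup{(f-B)} controls only $\|\gamma_k\|_{L^\infty(Q)}$ (not $\|\nabla_x\gamma_k\|_{L^\infty}$, i.e., not $\|\gamma_k\|_{\Gamma}$), the standard estimate of Lemma \ref{LP23_lem16} would not give a $k$-uniform bound. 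The Cole--Hopf reduction to a linear equation with an $L^\infty$-bounded zeroth-order coefficient is exactly what makes \textup{(f-B)} sufficient; everything after that is a routine use of the parabolic maximum principle together with the $L^q$ parabolic regularity estimate recorded in Lemma \ref{B21_thm4}.
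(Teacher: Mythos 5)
Your proof is correct, and its overall architecture coincides with the paper's: Cole--Hopf transformation $\phi_k=\exp(-u_k/2\nu)$ to linearize the HJB equation (the quadratic term indeed cancels exactly as you compute), a $k$-uniform $W^{1,2,q}(Q)$ bound on $\phi_k$ via Lemma \ref{B21_thm4}, the embedding of Remark \ref{B21_lem12} to control $\|\nabla\phi_k\|_{L^\infty}$, and the quotient formula $\nabla u_k=-2\nu\nabla\phi_k/\phi_k$ together with a uniform positive lower bound on $\phi_k$. The one place where you genuinely diverge is the key uniform two-sided bound: the paper first proves $\|u_k\|_{L^\infty(Q)}\le C$ through the stochastic optimal control representation of $u_k$ (lower bound from the coercivity of $L$ and \textup{(f-B)}, upper bound by testing with the zero control), and only then transfers this to $\phi_k$; you instead work directly on the transformed linear equation and obtain $c_\ast\le\phi_k\le C_\ast$ by comparison with the spatially homogeneous barriers $c_\ast e^{-bt}$, $C_\ast e^{bt}$, $b=C_0/(2\nu)$. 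Your route is purely PDE-based and avoids the (implicit) verification step identifying the $W^{1,2,q}$ solution of the HJB equation with the value function of the control problem, at the price of invoking the maximum principle for strong $W^{1,2,q}$ solutions (unproblematic here since $q>d+2$ makes $\phi_k$ and $\nabla\phi_k$ continuous); the paper's probabilistic argument, conversely, needs no comparison principle and delivers the $L^\infty$ bound on $u_k$ itself, which is also convenient elsewhere. A second, minor difference: when applying Lemma \ref{B21_thm4} you place $\gamma_k$ in the zeroth-order coefficient ($c=-\gamma_k/2\nu$, $w=0$), whereas the paper treats $\tfrac{1}{2\nu}\gamma_k\phi_k$ as the source term using the already-established $L^\infty$ bound on $\phi_k$; both applications are legitimate under \textup{(f-B)} and give the same conclusion.
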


\begin{proof}
It is divided into two steps. 

\smallskip

\noindent \emph{Step 1.} We show 
\begin{equation}
\label{eq:bound-uk}
\|u_k\|_{L^{\infty}(Q)}\le C.    
\end{equation}

Let $x \in \T^d$ and let $(X_s)_{s \in [t, T]}$ be a solution to the following stochastic differential equation:
$$
dX_s = [h(s, x) + \alpha_s] \, ds + \sqrt{2 \nu} \, dB_s, \qquad X_t = x,
$$
where $B_s$ denotes the $d$-dimensional Brownian motion, and $\alpha_s$ is a $\R^d$-valued control input at time $s$ such that $\mathbb{E}\left[\int_t^T \alpha_s\, ds\right] < +\infty$. Then, the solution $u_k$ of \eqref{GCGg}, \eqref{GCGa} and \eqref{GCGb} 
is interpreted as the value of the cost functional
$$
\mathcal{J}[\gamma_k](t, x, \alpha) := \mathbb{E} \left[\int_t^T [L(s, X_s, \alpha_s) + \gamma_k(s, X_s)]\, ds + g(X_T)\right],
$$
where $\alpha$ is chosen to minimize $\mathcal{J}[\gamma_k]$:
$$
u_k(t, x) = \inf_{\alpha} \mathcal{J}[\gamma_k](t, x, \alpha).
$$
Now, since
$$
L(v) = \frac{1}{2} |v-h|^2 \ge \frac{1}{2} |v|^2 - \frac{1}{2}\|h\|_{L^\infty(Q;\R^d)}^2,
$$
$\mathcal{J}[\gamma_k]$ is bounded form below as 
$$
\mathcal{J}[\gamma_k](t, x, \alpha) \ge \mathbb{E} \left[\int_t^T \left[\frac{1}{2} |\alpha_s|^2 - \frac{1}{2}\|h\|_{L^\infty(Q;\R^d)}^2- C_0\right]\, ds - \|g\|_{L^{\infty}(\T^d)}\right] \ge C.
$$
Moreover, considering the case $\alpha = 0$, we find an upper bound of $\mathcal{J}[\gamma_k]$ as
\begin{align*}
\mathcal{J}[\gamma_k](t, x, \alpha) &\le \mathcal{J}[\gamma_k](t, x, 0) = \mathbb{E} \left[\int_t^T \gamma_k(s, X_s)\, ds + g(X_T)\right]\\
&\le T C_0 + \|g\|_{L^{\infty}(Q)}.
\end{align*}
Summing up, we prove \eqref{eq:bound-uk}. 

\smallskip

\noindent \emph{Step 2.} We apply the Cole--Hopf transformation $\phi_k = \exp{\left(-u_k/2 \nu\right)}$. Then, \eqref{GCGg}, \eqref{GCGa} and \eqref{GCGb} are equivalently written as:
\begin{equation*} 
\del_t \phi_k + \nu \Delta \phi_k + h \cdot \nabla \phi_k = \frac{1}{2 \nu} \gamma_k \phi_k \quad  \text{in } Q, \qquad \phi_k(T,\cdot) = \exp{\left(-\frac{g}{2 \nu}\right)} \quad \text{on } \T^d.
\end{equation*}
As a result of Step 1, 
$\|\phi_k\|_{L^{\infty}(Q)}$, 
$\left\|\exp{\left(-{g}/{2\nu}\right)}\right\|_{W^{2-\frac{2}{q}, q}(\T^d)}$,  
$\|(1/2\nu)\gamma_k \phi_k\|_{L^q(Q)}$ are all bounded by a constant depending on $C$ in \eqref{eq:bound-uk}. 
We now apply Lemma \ref{B21_thm4} and obtain $\|\phi_k\|_{W^{1, 2, q}(Q)} \le C$.

On the other hand, we see from \eqref{eq:bound-uk} that $\phi_k \ge C$. Threfore, we deduce
$$
|\nabla u_k| = \left|-\frac{2 \nu}{\phi_k} \nabla \phi_k\right| \le C,
$$
which completes the proof.
\end{proof}

Now we can complete the proof of Theorem \ref{LP23_prop23} in the following way. 

\begin{proof}[Proof of Theorem \ref{LP23_prop23}, \textup{(ii)}]
We have already proved that \eqref{eq:bdd-gradu} holds. 

\begin{itemize}
    \item 
We apply Lemma \ref{B21_thm4} with $b =0$, $w = \gamma_k - H(\nabla u_k)$, and $u_T = g$ to obtain $\|u_k\|_{W^{1, 2, q}(Q)} \le C$.

\item We infer from Lemma \ref{LP23_lem13} that $\|v_k\|_{\Theta} \le C$ and $\|m_k\|_{W^{1, 2, q}(Q)} \le C$. 

\item If $\|\bar{m}_{k}\|_{W^{1, 2, q}(Q)}\le C$, we calculate as
$$
\|\bar{m}_{k+1}\|_{W^{1, 2, q}(Q)} \le (1-\delta_k) \|\bar{m}_k\|_{W^{1, 2, q}(Q)} + \delta_k \|m_k\|_{W^{1, 2, q}(Q)}\le C.
$$
Hence, we have $\|\bar{m}_k\|_{W^{1, 2, q}(Q)} \le C$ for all $k$ by induction. 

\item Because $\sup_{t\in [0,T]}\operatorname{Lip}_x(\bar{m}_k) \le C \|\bar{m}_k\|_{W^{1, 2, q}(Q)} \le C$, \eqref{eq:f-3} is available so that we have  $\|\gamma_k\|_{\Gamma}\le C$.  

\item Similarly, we deduce $\|w_k\|_{\Theta} \le C$ from Lemma \ref{LP23_lem17}, and hence $\|\bar{w}_k\|_{\Theta} \le C$ by induction.

\item  Finally, since $(m_k, w_k) = (\bsym{M}[\bsym{v}[\gamma_k]], \bsym{M}[\bsym{v}[\gamma_k]] \bsym{v}[\gamma_k])$ and $\bsym{v}[\gamma_k] \in \Theta$ by Lemma \ref{LP23_lem17}, we have $(m_k, w_k) \in \mathcal{R}$ from Lemma \ref{LP23_lem5}. From the convexity of $\mathcal{R}$ and $(\bar{m}_0, \bar{w}_0) \in \mathcal{R}$, we also obtain $(\bar{m}_k, \bar{w}_k) \in \mathcal{R}$ by induction.
\end{itemize}
\end{proof}

\section{Proofs of Proposition \ref{LP23_lem6} and Theorem \ref{LP23_thm7}}
\label{sec:LP23_lem6}

Throughout this and subsequent sections, $r$ refers to the same constant appearing in Theorem \ref{LP23_thm7}.
We begin by showing the following propositions.  

\begin{prop} \label{LP23_lem14}
Let $R > 0, v_i \in \Theta$ and set $m_i = \bsym{M}[v_i]$ ($i = 1, 2$). Assume that $\|v_1\|_{\Theta} \le R$ and $\|m_2\|_{{W^{1, 2, q}(Q)}} \le R$. Then, there exists a constant $C(R) > 0$, depending only on $R$, such that
$$
\|m_2 - m_1\|_{{L^2(0, T; L^{\infty}(\T^d))}} \le C(R) \left(\iint_Q |v_2-v_1|^2 m_2 \,dx \, dt\right)^{{\frac{1}{r}}}.
$$ 
\end{prop}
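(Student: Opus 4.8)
The plan is to estimate $m_2 - m_1$ by exploiting the fact that both solve a Fokker--Planck equation with the same diffusion, so that their difference satisfies a linear parabolic equation whose right-hand side involves the velocity difference $v_2 - v_1$ weighted by the densities. Concretely, set $z := m_2 - m_1$. Subtracting the two equations $\del_t m_i - \nu\Delta m_i + \nabla\cdot(m_i v_i) = 0$ gives
\begin{equation*}
\del_t z - \nu\Delta z + \nabla\cdot(z\, v_1) = -\nabla\cdot\bigl(m_2 (v_2 - v_1)\bigr)\quad\text{in }Q,\qquad z(0,\cdot)=0\text{ on }\T^d.
\end{equation*}
This is a linear parabolic problem in $z$ with drift coefficient $v_1$ (bounded in $\Theta$ by $R$), zero initial data, and source term in divergence form $-\nabla\cdot(m_2(v_2-v_1))$. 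The key quantity to control on the right-hand side is $\|m_2(v_2-v_1)\|_{L^s(Q;\R^d)}$ for a suitable exponent $s$; using $\|m_2\|_{W^{1,2,q}(Q)}\le R$ together with the embedding $W^{1,2,q}(Q)\hookrightarrow \mathcal{C}(Q)$ from Remark \ref{B21_lem12} (since $q>d+2$), we have $\|m_2\|_{L^\infty(Q)}\le C(R)$, hence $\|m_2(v_2-v_1)\|_{L^2(Q;\R^d)}^2 \le C(R)\iint_Q |v_2-v_1|^2 m_2\,dx\,dt$ because $m_2\le C(R)$ pointwise — this is exactly where the weight $m_2$ in the stated estimate appears and why the right-hand side of the conclusion is the $m_2$-weighted $L^2$ norm rather than a plain one.

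Next I would invoke maximal parabolic regularity / the divergence-form estimate for the linear problem above to bound $z$ in an appropriate space. One clean route: first obtain an $L^2(0,T;H^1)\cap L^\infty(0,T;L^2)$ energy bound $\|z\|_{L^2(0,T;H^1(\T^d))} \le C(R)\|m_2(v_2-v_1)\|_{L^2(Q;\R^d)}$ by testing the equation against $z$ and using that the drift $v_1$ is bounded; then bootstrap. The target norm $L^2(0,T;L^\infty(\T^d))$ is obtained by the Sobolev embedding $W^{1,r}(\T^d)\hookrightarrow L^\infty(\T^d)$, valid for $r>d$ (and $r=2$ works when $d=1$), which is precisely the choice of $r$ in \eqref{eq:val-p}. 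So I need $z(t,\cdot)\in W^{1,r}(\T^d)$ for a.e. $t$ with $\int_0^T\|z(t)\|_{W^{1,r}}^2\,dt$ controlled. This requires an $L^2(0,T;W^{1,r})$-type estimate for the parabolic problem, which follows from $L^p$-maximal-regularity-type estimates for divergence-form right-hand sides (the source $m_2(v_2-v_1)\in L^2(Q;\R^d)$ and the gain of one spatial derivative lands $z$ in a space embedding into $L^2(0,T;W^{1,r})$ in dimension $d=1$, or for general $d\ge2$ one exploits interpolation between the energy estimate and higher integrability of the source, using $\|v_2-v_1\|_{L^\infty}$ only through the combination $m_2|v_2-v_1|$ where needed). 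The exponent $1/r$ in the conclusion (rather than $1/2$) reflects that the raw estimate gives control of $\|z\|$ in a strong norm by $\|m_2(v_2-v_1)\|_{L^2}$, and then one interpolates this strong bound against the trivially available $L^\infty$ or $L^2$ bound on $z$ (coming from $\|m_i\|_{W^{1,2,q}}\le C(R)$, so $\|z\|_{L^\infty(Q)}\le C(R)$) to convert a unit power of $\|m_2(v_2-v_1)\|_{L^2}$ into the power $2/r$ of $\iint |v_2-v_1|^2 m_2$ — the square root of which is the $1/r$ power appearing in the statement. This interpolation step is presumably what forces $r\ge2$ and ties $1/r$ to the dimension via the embedding $W^{1,r}\hookrightarrow L^\infty$.

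I expect the main obstacle to be obtaining the intermediate estimate in the correct norm with a constant depending only on $R$: the drift $v_1$ is merely bounded with $D_x v_1\in L^q$, so standard textbook parabolic $W^{1,2,q}$ estimates for the equation in $z$ are not immediately applicable in divergence form with an $L^2$ source, and one must be careful to use an energy/interpolation argument rather than full maximal regularity. Controlling the drift term $\nabla\cdot(z v_1)$ — equivalently $v_1\cdot\nabla z + (\nabla\cdot v_1)z$ — requires absorbing $v_1\cdot\nabla z$ into the diffusion via Young's inequality (fine, since $\|v_1\|_{L^\infty}\le R$) and handling $(\nabla\cdot v_1)z$ using $\nabla\cdot v_1\in L^q(Q)$ with $q>d+2$ and $z\in L^\infty$ or $z\in L^2(0,T;H^1)$ via Gagliardo--Nirenberg. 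Once the energy estimate $\|z\|_{L^2(0,T;H^1)}\le C(R)\|m_2(v_2-v_1)\|_{L^2(Q)}$ is secured, the remaining steps — the Sobolev embedding into $L^\infty$ (in $d=1$), the interpolation with the uniform $L^\infty$ bound on $z$, and the substitution of $\|m_2(v_2-v_1)\|_{L^2(Q)}^2\le C(R)\iint_Q|v_2-v_1|^2 m_2$ — are routine. For $d\ge2$ one additionally leverages that $m_2(v_2-v_1)$ enjoys better-than-$L^2$ integrability when multiplied appropriately, or accepts the weaker exponent $r>d$; in either case the argument is a parabolic regularity estimate followed by an interpolation inequality, and I would present it in that order, flagging the drift-absorption and the constant-tracking as the only delicate points.
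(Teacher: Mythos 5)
Your proposal follows essentially the same route as the paper: the same equation for $z=m_2-m_1$ with drift $v_1$ and divergence-form source $-\nabla\cdot(m_2(v_2-v_1))$, the same energy estimate (the paper packages it via Lions' abstract theorem for the Gel'fand triple $H^1(\T^d)\hookrightarrow L^2(\T^d)\hookrightarrow H^1(\T^d)'$) giving $\|z\|_{L^2(0,T;H^1)}\le C(R)\,\|m_2(v_2-v_1)\|_{L^2(Q;\R^d)}$ with $\|m_2\|_{L^\infty(Q)}\le C(R)$ converting the source into the $m_2$-weighted integral, and the same interpolation-plus-Sobolev-embedding ($W^{1,r}(\T^d)\hookrightarrow L^\infty(\T^d)$, $r>d$; $H^1(\T^1)\hookrightarrow L^\infty(\T^1)$ when $d=1$) producing the exponent $1/r$. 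The only detail to make explicit is that the interpolation must be applied to $\nabla z$ as well as $z$ (so you need the uniform bound $\|z\|_{L^\infty(Q)}+\|\nabla z\|_{L^\infty(Q;\R^d)}\le C(R)$, which follows from Lemma \ref{LP23_lem13} and Remark \ref{B21_lem12} exactly as in the paper); no maximal regularity is needed, as you correctly anticipated.
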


\begin{proof}
Let $\mu := m_2-m_1 \in W^{1, 2, q}(Q)$ and $w := v_2-v_1 \in \Theta$. Then, $\mu$ is the solution to the equation
$$
\del_t \mu - \nu \Delta \mu + \nabla \cdot (v_1 \mu) = \vhi := -\nabla \cdot(w m_2)\quad \mbox{in }Q, \qquad \mu(0) = 0\quad \mbox{on }\mathbb{T}^d.
$$
Setting $V := H^1(\T^d)$ and $H := L^2(\T^d)$, we consider the Gel'fand triple $V \hookrightarrow H \hookrightarrow V'$, where $V'$ denotes the dual space of $V$. 
We introduce the operator $A(t) : V\to V'$ by setting 
$$
A(t) m := - \nu \Delta m + \nabla \cdot \left(v_1(t) m\right) 
$$
for $m \in V$ and $t \in [0, T]$. 
Then, using $\|v_1\|_{\Theta} \le R$, we calculate as 
\begin{align*}
{}_{V'}\langle A(t)m, m\rangle_V 
&\ge \nu \|\nabla m\|_{L^2(\T^d; \R^d)}^2 - R \|m\|_H \|\nabla m\|_{L^2(\T^d; \R^d)}\\
&\ge \frac{\nu}{2} \|m\|_{H^1(\T^d)}^2 - \left(\dfrac{R^2}{2\nu}+\dfrac{\nu}{2}\right) \|m\|_H^2
\end{align*}
for $m\in V$, and 
\begin{equation*}
{}_{V'}\langle A(t)m_1, m_2\rangle_V
\le (\nu + R) \|m_1\|_V \|m_2\|_V
\end{equation*}
for $m_1, m_2 \in V$. 

Therefore, since $\|\vhi(t)\|_{V'} \le \|w(t) m_2(t)\|_{L^2(\T^d; \R^d)}$, 
we can apply a well-known estimate for solutions to abstract parabolic equations 
(\cite[Chapter 3, Theorem 1.2]{L71} for example) to deduce
\begin{equation} \label{LP23_lem14_ineq1}
\|\mu\|_{L^2(0, T; V)} + \|\del_t \mu\|_{L^2(0, T; V')} \le C \|\vhi\|_{L^2(0, T; V')} \le C \|w m_2\|_{L^2(Q; \R^d)}.
\end{equation}
This, together with $\|m_2\|_{{W^{1, 2, q}(Q)}}\le R$, implies
\begin{equation} \label{LP23_lem14_ineq2}
\|m_2-m_1\|_{L^2(0, T; H^1(\T^d))} \le C(R) \left(\iint_Q |v_2-v_1|^2 m_2 \,dx \, dt\right)^{\frac{1}{2}}.
\end{equation}

Now, we assume that $d\ge 2$ (and thus $r>d$). Set $W := W^{1, r}(\T^d)$. Since $\|v_1\|_{\Theta} \le R$, we see from Remark \ref{B21_lem12} and Lemma \ref{LP23_lem13} that there is a constant $C(R) > 0$ satisfying 
$$
\|\mu\|_{L^{\infty}(Q)} + \|\nabla \mu\|_{L^{\infty}(Q; \R^d)} \le 
C\|\mu\|_{W^{1, 2, q}(Q)} 
\le C(R).
$$
Hence, we have, for $t\in [0,T]$, 
\begin{align*}
\|\mu(t)\|_{L^r(\T^d)}^r 
&= \int_{\T^d} |\mu(t)|^{2} \cdot |\mu(t, x)|^{r-2}\, dx\\
&\le \|\mu(t)\|_{L^{\infty}(\T^d)}^{r-2} \int_{\T^d} |\mu(t, x)|^2\, dx\le C(R)^{r-2} \|\mu(t)\|_{L^2(\T^d)}^2.
\end{align*}
Similarly, for $t\in [0,T]$, 
$$
\|\nabla \mu(t)\|_{L^r(\T^d; \R^d)}^r \le C(R)^{r-2} \|\nabla \mu(t)\|_{L^2(\T^d; \R^d)}^2.
$$
Summing up, 
$$
\|\mu(t)\|_W^r \le C(R) \|\mu(t)\|_{H^1(\T^d)}^2.
$$
As a consequence, we obtain from \eqref{LP23_lem14_ineq2} 
$$
\|m_2-m_1\|_{L^r(0, T; W)} \le C(R) \left(\iint_Q |v_2-v_1|^2 m_2 \,dx \, dt\right)^{\frac{1}{r}}.
$$
Since $W = W^{1, r}(\T^d) \hookrightarrow L^{\infty}(\T^d)$, this inequality implies
\begin{equation}
\label{eq:prop6.1}
\|m_2-m_1\|_{L^r(0, T; L^{\infty}(\T^d))} \le C(R) \left(\iint_Q |v_2-v_1|^2 m_2 \,dx \, dt\right)^{\frac{1}{r}}.
\end{equation}
For the case $d=1$ (and thus $r=2$), we directly obtain \eqref{eq:prop6.1}, since $H^{1}(\T^1) \hookrightarrow L^{\infty}(\T^1)$. 

The desired estimate is a direct consequence of \eqref{eq:prop6.1}. 
\end{proof}


\begin{prop} \label{LP23_prop21}
Let $R > 0$ and $\widehat{\gamma} \in \Gamma_R$. Define $\widehat{m} := \bsym{m}[\widehat{\gamma}]$ and $\widehat{w} := \bsym{w}[\widehat{\gamma}]$. Then, there exists a constant $C(R) > 0$, independent of $\widehat{\gamma}$ such that for any $(m ,w) \in \mathcal{R}$ such that $\|m\|_{{W^{1, 2, q}(Q)}} \le R$, 
$$
\|m - \widehat{m}\|_{{L^2(0, T; L^{\infty}(\T^d))}} \le C(R) \sigma^{{\frac{1}{r}}}, \qquad
\|w - \widehat{w}\|_{L^2(Q; \R^d)} \le C(R) \left(\sqrt{\sigma} + \sigma^{{\frac{1}{r}}}\right),
$$
where $\sigma = \mathcal{Z}[\widehat{\gamma}](m, w) - \mathcal{Z}[\widehat{\gamma}](\widehat{m}, \widehat{w}) \ge 0$.
\end{prop}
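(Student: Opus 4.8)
\textbf{Proof plan for Proposition \ref{LP23_prop21}.}

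The plan is to reduce the estimate for $m - \widehat{m}$ to Proposition \ref{LP23_lem14} and to reduce the estimate for $w - \widehat{w}$ to a combination of the coercivity inequality of Lemma \ref{LP23_lem19} with the bound just obtained for the densities. First I would introduce $v \in L^\infty(Q;\R^d)$ with $w = mv$ (which exists by the definition of $\mathcal{R}$) and set $\widehat{v} := \bsym{v}[\widehat{\gamma}]$, so that $\widehat{w} = \widehat{m}\widehat{v}$. Since $\|\widehat{\gamma}\|_\Gamma \le R$, Lemma \ref{LP23_lem17} gives $\|\widehat{v}\|_\Theta + \|\widehat{m}\|_{W^{1,2,q}(Q)} + \|\widehat{w}\|_\Theta \le C(R)$; together with the hypothesis $\|m\|_{W^{1,2,q}(Q)} \le R$ we have uniform control on all the objects in play. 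Note $\widehat{m} = \bsym{M}[\widehat{v}]$ and, because $(m,w) \in \mathcal{R}$ with $w = mv$, Lemma \ref{LP23_lem13} (or the characterization of $\mathcal{R}$) shows $m = \bsym{M}[v]$ as well; hence Proposition \ref{LP23_lem14}, applied with $v_1 = \widehat{v}$, $m_1 = \widehat{m}$, $v_2 = v$, $m_2 = m$, yields
\begin{equation*}
\|m - \widehat{m}\|_{L^2(0,T;L^\infty(\T^d))} \le C(R)\left(\iint_Q |v - \widehat{v}|^2 m \, dx\, dt\right)^{\frac{1}{r}}.
\end{equation*}
By Lemma \ref{LP23_lem19}, $\iint_Q |v - \widehat{v}|^2 m \, dx\, dt \le 2C_0\,\sigma$, which gives the first inequality with an adjusted constant $C(R)$.

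For the second inequality I would write $w - \widehat{w} = mv - \widehat{m}\widehat{v} = m(v - \widehat{v}) + (m - \widehat{m})\widehat{v}$ and estimate the two terms separately in $L^2(Q;\R^d)$. For the first term, since $0 \le m$ and (by Lemma \ref{LP23_lem13}) $m$ is bounded in $L^\infty$, one has $\iint_Q m^2 |v - \widehat{v}|^2 \le \|m\|_{L^\infty(Q)} \iint_Q m|v - \widehat{v}|^2 \le C(R)\sigma$, so $\|m(v - \widehat{v})\|_{L^2(Q;\R^d)} \le C(R)\sqrt{\sigma}$. For the second term, $\|\widehat{v}\|_{L^\infty(Q;\R^d)} \le C(R)$, so $\|(m - \widehat{m})\widehat{v}\|_{L^2(Q;\R^d)} \le C(R)\|m - \widehat{m}\|_{L^2(Q)} \le C(R)\|m - \widehat{m}\|_{L^2(0,T;L^\infty(\T^d))} \le C(R)\sigma^{1/r}$ by the first part. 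Adding the two contributions produces $\|w - \widehat{w}\|_{L^2(Q;\R^d)} \le C(R)(\sqrt{\sigma} + \sigma^{1/r})$, as claimed.

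The only genuinely delicate point is verifying the hypotheses of Proposition \ref{LP23_lem14}: one must confirm that both $m$ and $\widehat{m}$ arise as $\bsym{M}[\cdot]$ of the relevant velocity fields and that the $W^{1,2,q}$ bound needed there ($\|m\|_{W^{1,2,q}(Q)} \le R$ plays the role of the bound on $m_2$) is exactly the standing hypothesis here. For $\widehat{m}$ this is immediate from its definition; for $m$ it follows because membership of $(m,w)$ in $\mathcal{R}$ encodes precisely that $m$ solves the Fokker--Planck equation with drift $v$ and initial datum $m_0$, i.e. $m = \bsym{M}[v]$, and $v \in L^\infty(Q;\R^d) \subset \Theta$ is needed only through its $L^\infty$-part in the argument of Proposition \ref{LP23_lem14} via the velocity $v_1$; here $v_1 = \widehat{v} \in \Theta$ with $\|\widehat{v}\|_\Theta \le C(R)$ is what is actually used. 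Everything else is the bookkeeping of constants, all of which depend only on $R$ (and the fixed data $C_0$, $\nu$, $h$, $q$, $d$, $T$), so the constants are independent of $\widehat{\gamma}$ as required.
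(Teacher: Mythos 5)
Your proof is correct and takes essentially the same route as the paper: Lemma \ref{LP23_lem19} and Lemma \ref{LP23_lem17} feed into Proposition \ref{LP23_lem14} with exactly the same role assignment ($v_1=\widehat{v}$, $m_2=m$), and the same splitting $w-\widehat{w}=m(v-\widehat{v})+(m-\widehat{m})\widehat{v}$ gives the second estimate. One small correction that does not affect the argument: the $L^\infty$-bound on $m$ should come from the standing hypothesis $\|m\|_{W^{1,2,q}(Q)}\le R$ together with the embedding of Remark \ref{B21_lem12} (not from Lemma \ref{LP23_lem13}, since the drift $v$ of a generic element of $\mathcal{R}$ carries no $\Theta$-bound), and the parenthetical inclusion $L^{\infty}(Q;\R^d)\subset\Theta$ is false, though, as you note, only $\widehat{v}\in\Theta$ is actually needed in Proposition \ref{LP23_lem14}.
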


\begin{proof}
We know $\iint_Q |v-\widehat{v}|^2 m \, dx\, dt \le 2 C_0 \sigma$ by Lemma \ref{LP23_lem19}, and $\|\widehat{v}\|_{\Theta} \le C(R)$ by Lemma \ref{LP23_lem17}. Using $\|m\|_{W^{1, 2, q}(Q)} \le R$ and applying Proposition \ref{LP23_lem14} with $R=C$, we derive the first inequality
$$
\|m-\widehat{m}\|_{L^2(0, T; L^{\infty}(\T^d))} \le C(R) \left(\iint_Q |v-\widehat{v}|^2 m\, dx\, dt \right)^{\frac{1}{r}} \le C \sigma^{\frac{1}{r}}.
$$
Moreover, 
\begin{align*}
\|w-\widehat{w}\|_{L^2(Q; \R^d)} &\le \|m(v-\widehat{v})\|_{L^2(Q; \R^d)} + \|(m-\widehat{m})\widehat{v}\|_{L^2(Q; \R^d)}\\
&\le \sqrt{R} \left(\iint_Q |v-\widehat{v}|^2m\,dx\,dt\right)^{\frac{1}{2}} + C(R) \left(\iint_Q |m-\widehat{m}|^2\,dx\,dt\right)^{\frac{1}{2}}\\
&\le C \left(\sqrt{\sigma} + \|m-\widehat{m}\|_{L^2(0, T; L^{\infty}(\T^d))}\right)\\
&\le C \left(\sqrt{\sigma} + \sigma^{\frac{1}{r}}\right),
\end{align*}
which is the second inequality to be proved. 
\end{proof}

\begin{prop} \label{LP23_lem24}
Let $(m_i, w_i) \in \mathcal{R}$ ($i = 1, 2$) and define $\gamma_1 := \bsym{\gamma}[m_1]$. Then, we have
\begin{equation} 
\label{LP23_lem24_ineq1}
\iint_Q \gamma_1 (m_2 - m_1)\, dx\, dt \le \mathcal{J}_2(m_2) - \mathcal{J}_2(m_1)
\end{equation}
and
\begin{equation} 
\label{LP23_lem24_ineq1a}
\mathcal{J}_2(m_2) - \mathcal{J}_2(m_1) \le \iint_Q \gamma_1 (m_2 - m_1)\, dx\, dt
+ L_f \int_0^T \prod_{s=1, {\infty}} \|m_2(t) - m_1(t)\|_{L^s(\T^d)} \,dt.
\end{equation}
\end{prop}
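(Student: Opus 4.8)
The plan is to derive both inequalities from one exact identity for the ``linearization remainder'' of $\mathcal{J}_2$ at $m_1$. First I would record the facts needed to make the coupling term meaningful along segments: for $(m_i,w_i)\in\mathcal{R}$ the Fokker--Planck structure gives $m_i\ge 0$ and $\int_{\T^d}m_i(t,x)\,dx=1$ for every $t\in[0,T]$ (maximum principle and mass conservation, as behind Lemma~\ref{LP23_lem13}), while $W^{1,2,q}(Q)\hookrightarrow\mathcal{C}(Q)$ by Remark~\ref{B21_lem12}; hence $m_i(t,\cdot)\in\mathcal{D}_1(\T^d)$, and by convexity of $\mathcal{D}_1(\T^d)$ also $m^\theta(t,\cdot):=\theta\,m_2(t,\cdot)+(1-\theta)\,m_1(t,\cdot)\in\mathcal{D}_1(\T^d)$ for all $\theta\in[0,1]$ and $t\in[0,T]$, so $f(t,\cdot,m^\theta)$ and $F(t,\cdot)$ are defined. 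Inserting the potential identity \eqref{eq:f-5} (with $\theta$ in place of $s$) into $\mathcal{J}_2(m_2)-\mathcal{J}_2(m_1)=\int_0^T\!\big(F(t,m_2(t))-F(t,m_1(t))\big)\,dt$, subtracting $\iint_Q\gamma_1(m_2-m_1)\,dx\,dt$ with $\gamma_1=f(\cdot,\cdot,m_1)$, and using Fubini (valid since $f$ is bounded by \textup{(f-B)}), one obtains
\[
\mathcal{J}_2(m_2)-\mathcal{J}_2(m_1)-\iint_Q\gamma_1(m_2-m_1)\,dx\,dt
=\int_0^T\!\!\int_0^1\!\!\int_{\T^d}\big[f(t,x,m^\theta)-f(t,x,m_1)\big]\big[m_2(x)-m_1(x)\big]\,dx\,d\theta\,dt .
\]

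Given this identity, \eqref{LP23_lem24_ineq1} amounts to showing the right-hand side is nonnegative. For fixed $\theta\in(0,1]$ and $t$, I would use the exact relation $m^\theta-m_1=\theta(m_2-m_1)$ to rewrite the inner integral as $\theta^{-1}\int_{\T^d}[f(t,x,m^\theta)-f(t,x,m_1)][m^\theta(x)-m_1(x)]\,dx$, which is $\ge 0$ by the monotonicity assumption \eqref{eq:f-4} applied to the pair $m_1,m^\theta$; integrating over $\theta$ and $t$ gives \eqref{LP23_lem24_ineq1}. (Conceptually this is the subgradient inequality for the convex functional $m\mapsto F(t,m)$ that \eqref{eq:f-4} encodes.) For \eqref{LP23_lem24_ineq1a} I would instead bound the inner $x$-integral by H\"older's inequality and then by the Lipschitz estimate \eqref{eq:f-2} at fixed $t$,
\[
\int_{\T^d}\big[f(t,x,m^\theta)-f(t,x,m_1)\big]\big[m_2-m_1\big]\,dx
\le L_f\,\|m^\theta(t)-m_1(t)\|_{L^\infty(\T^d)}\,\|m_2(t)-m_1(t)\|_{L^1(\T^d)} ,
\]
and since $\|m^\theta(t)-m_1(t)\|_{L^\infty(\T^d)}=\theta\,\|m_2(t)-m_1(t)\|_{L^\infty(\T^d)}\le\|m_2(t)-m_1(t)\|_{L^\infty(\T^d)}$, this bound is uniform in $\theta\in[0,1]$; integrating over $\theta\in(0,1)$ and $t\in(0,T)$ and recognizing $\|m_2(t)-m_1(t)\|_{L^\infty(\T^d)}\|m_2(t)-m_1(t)\|_{L^1(\T^d)}=\prod_{s=1,\infty}\|m_2(t)-m_1(t)\|_{L^s(\T^d)}$ yields \eqref{LP23_lem24_ineq1a}.

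There is no substantial analytic difficulty here: once the remainder identity is in hand, \eqref{LP23_lem24_ineq1} is just monotonicity after the rescaling $m^\theta-m_1=\theta(m_2-m_1)$, and \eqref{LP23_lem24_ineq1a} is just H\"older's inequality together with \eqref{eq:f-2}. The only points requiring care, which I would treat as the main technical issue, are the soft ones collected in the first step: that the segments $m^\theta(t,\cdot)$ stay in $\mathcal{D}_1(\T^d)$ so that $f(t,\cdot,m^\theta)$ and $F(t,\cdot)$ are defined; the joint measurability of $(t,\theta,x)\mapsto f(t,x,m^\theta)$ and the integrability needed to apply Fubini; and the behaviour of the integrand as $\theta\to 0^+$, which is in fact harmless since $m^\theta-m_1=\theta(m_2-m_1)$ exactly. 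All of these follow from \textup{(f-B)}, the continuity of $f$ and $F$, and the embedding of Remark~\ref{B21_lem12}.
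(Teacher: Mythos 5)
Your proposal is correct and follows essentially the same route as the paper: the same remainder identity obtained from the potential property \eqref{eq:f-5}, the same rescaling $m^\theta-m_1=\theta(m_2-m_1)$ to invoke the monotonicity \eqref{eq:f-4} for \eqref{LP23_lem24_ineq1}, and the same Lipschitz-plus-H\"older bound (dropping the factor from integrating $\theta$ over $[0,1]$) for \eqref{LP23_lem24_ineq1a}. The extra care you take about segments remaining in $\mathcal{D}_1(\T^d)$ and Fubini is a harmless, slightly more explicit version of what the paper leaves implicit.
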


\begin{proof}
Set
\begin{align*}
I &:= \left[\mathcal{J}_2(m_2) - \mathcal{J}_2(m_1)\right] - \left(\iint_Q \gamma_1(t, x)[m_2(t, x) - m_1(t, x)]\, dx\, dt\right)\\
&= \int_0^T \left(F(t, m_2(t)) - F(t, m_1(t)) - \int_{\T^d} f(t, x, m_1(t))[m_2(t, x) - m_1(t, x)]\,dx\right)\, dt.
\end{align*}
Since $F$ is the potential of $f$, we can express it as 
\begin{multline*}
I = \int_0^T \int_0^1 \int_{\T^d} \left[f(t, x, m_1(t) + s[m_2(t)-m_1(t)]) - f(t, x, m_1(t))\right] \\
\times [m_2(t, x)-m_1(t, x)]\, dx\, ds\, dt.
\end{multline*}
Rewriting $m_2-m_1$ as $\frac{1}{s} \left\{[m_1+s(m_2-m_1)] - m_1\right\}$ and then applying the monotonicity \eqref{eq:f-4} of $f$, we see $I \ge 0$, which is \eqref{LP23_lem24_ineq1}. On the other hand, 
\begin{align*}
I &\le L_f \int_0^T \int_0^1 \int_{\T^d} s \|m_2-m_1\|_{L^{\infty}(\T^d)} |m_2 - m_1| \, dx\, ds\, dt\\
&\le L_f \int_0^T \prod_{s = 1, \infty} \|m_2(t)-m_1(t)\|_{L^s(\T^d)}\, dt,
\end{align*}
which is \eqref{LP23_lem24_ineq1a}. 
\end{proof}

\begin{prop} \label{LP23_cor25}
Under the same conditions as in Proposition \ref{LP23_lem24}, we have
$$
\mathcal{Z}[\gamma_1](m_2, w_2) - \mathcal{Z}[\gamma_1](m_1, w_1) \le \mathcal{J}(m_2, w_2) - \mathcal{J}(m_1, w_1).
$$
\end{prop}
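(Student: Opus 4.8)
The plan is to reduce the claimed inequality directly to the lower bound \eqref{LP23_lem24_ineq1} of Proposition \ref{LP23_lem24} by simply unfolding the definitions of $\mathcal{Z}$ and $\mathcal{J}$. First I would use $\gamma_1 = \bsym{\gamma}[m_1] = f(\cdot,\cdot,m_1)$ together with $\mathcal{Z}[\gamma_1](m,w) = \mathcal{J}_1(m,w) + \iint_Q \gamma_1 m\,dx\,dt$ to write
\[
\mathcal{Z}[\gamma_1](m_2,w_2) - \mathcal{Z}[\gamma_1](m_1,w_1) = \big[\mathcal{J}_1(m_2,w_2) - \mathcal{J}_1(m_1,w_1)\big] + \iint_Q \gamma_1 (m_2 - m_1)\, dx\, dt .
\]
On the other hand, $\mathcal{J} = \mathcal{J}_1 + \mathcal{J}_2$ gives
\[
\mathcal{J}(m_2,w_2) - \mathcal{J}(m_1,w_1) = \big[\mathcal{J}_1(m_2,w_2) - \mathcal{J}_1(m_1,w_1)\big] + \big[\mathcal{J}_2(m_2) - \mathcal{J}_2(m_1)\big] .
\]
Since $(m_i,w_i)\in\mathcal{R}\subset W^{1,2,q}(Q)\times\Theta$, all four functionals are finite, so I may subtract the two identities; the $\mathcal{J}_1$-differences cancel, and the asserted inequality becomes equivalent to
\[
\iint_Q \gamma_1 (m_2 - m_1)\, dx\, dt \le \mathcal{J}_2(m_2) - \mathcal{J}_2(m_1),
\]
which is exactly \eqref{LP23_lem24_ineq1}. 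This closes the argument.

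There is essentially no obstacle here: the proposition is a bookkeeping consequence of Proposition \ref{LP23_lem24}, and it invokes only the lower estimate \eqref{LP23_lem24_ineq1}, hence only the potential structure \textup{(f-P)} and the monotonicity \textup{(f-M)} of $f$, not the Lipschitz property \textup{(f-L)}. I would remark in passing that the companion upper bound \eqref{LP23_lem24_ineq1a} yields, in the same way, the reverse estimate $\mathcal{J}(m_2,w_2) - \mathcal{J}(m_1,w_1) \le \mathcal{Z}[\gamma_1](m_2,w_2) - \mathcal{Z}[\gamma_1](m_1,w_1) + L_f \int_0^T \prod_{s=1,\infty}\|m_2(t)-m_1(t)\|_{L^s(\T^d)}\,dt$, which is the form presumably needed for the step-size/QAG analysis later; but it is not part of the statement as written, so I would keep the proof of this proposition to the two short lines above.
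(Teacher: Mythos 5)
Your proof is correct and is essentially the paper's own argument: the paper simply adds $\mathcal{J}_1(m_2,w_2)-\mathcal{J}_1(m_1,w_1)$ to both sides of \eqref{LP23_lem24_ineq1}, which is the same bookkeeping reduction you carry out by unfolding the definitions of $\mathcal{Z}[\gamma_1]$ and $\mathcal{J}$.
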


\begin{proof}
By adding $\mathcal{J}_1(m_2, w_2) - \mathcal{J}_1(m_1, w_1)$ to both sides of (\ref{LP23_lem24_ineq1}), we get the desired inequality.
\end{proof}

\begin{prop} \label{LP23_cor22}
Let $R > 0$. There exists a constant $C(R) > 0$ such that for any $(m, w) \in \mathcal{R}$ such that $\|m\|_{{W^{1,2,q}(Q)}} \le R$,
$$
\|m - \bar{m}\|_{{L^2(0, T; L^{\infty}(\T^d))}} \le C(R) \eps^{{\frac{1}{r}}}, \qquad
\|w - \bar{w}\|_{L^2(Q; \R^d)} \le C(R) \left(\sqrt{\eps}+ \eps^{{\frac{1}{r}}}\right),
$$
where $\eps = \mathcal{J}(m, w) - \mathcal{J}(\bar{m}, \bar{w}) \ge 0$.
\end{prop}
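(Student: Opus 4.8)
The plan is to obtain Proposition \ref{LP23_cor22} as a corollary of Proposition \ref{LP23_prop21} applied with the particular choice $\widehat{\gamma} := \bar{\gamma} = f(\cdot,\cdot,\bar{m})$, together with the monotonicity inequality \eqref{LP23_lem24_ineq1}. First I would check that this choice of $\widehat{\gamma}$ produces the equilibrium point: since $\bsym{u}[\bar{\gamma}]$ solves the HJB terminal value problem \eqref{MFG_eq1}--\eqref{MFG_eq2}, uniqueness (Theorem \ref{thm_MFG_regularity}, or Proposition \ref{LP23_lem6}) forces $\bsym{u}[\bar{\gamma}] = \bar{u}$, hence $\bsym{v}[\bar{\gamma}] = -\nabla_p H(\cdot,\cdot,\nabla\bar{u}) = \bar{v}$, and then $\bsym{m}[\bar{\gamma}] = \bsym{M}[\bar{v}] = \bar{m}$ by uniqueness of the FP solution and $\bsym{w}[\bar{\gamma}] = \bar{m}\bar{v} = \bar{w}$. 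Moreover $\bar{\gamma}\in\Gamma$ by Theorem \ref{thm_MFG_regularity}, so $\|\bar{\gamma}\|_\Gamma$ is a fixed finite constant; combined with the hypothesis $\|m\|_{W^{1,2,q}(Q)}\le R$, we may invoke Proposition \ref{LP23_prop21} with the parameter there taken to be $\max\{R,\|\bar{\gamma}\|_\Gamma\}$. This gives
\[
\|m-\bar{m}\|_{L^2(0,T;L^\infty(\T^d))}\le C(R)\,\sigma^{\frac1r},\qquad \|w-\bar{w}\|_{L^2(Q;\R^d)}\le C(R)\bigl(\sqrt\sigma+\sigma^{\frac1r}\bigr),
\]
where $\sigma := \mathcal{Z}[\bar{\gamma}](m,w)-\mathcal{Z}[\bar{\gamma}](\bar{m},\bar{w})\ge 0$.

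The only thing left is to show $\sigma\le\eps$. Subtracting the two evaluations and using $\mathcal{J}=\mathcal{J}_1+\mathcal{J}_2$ while $\mathcal{Z}[\bar{\gamma}](m,w)=\mathcal{J}_1(m,w)+\iint_Q\bar{\gamma}\,m$, the $\mathcal{J}_1$ contributions cancel and one is left with $\sigma-\eps=\iint_Q\bar{\gamma}\,(m-\bar{m})\,dx\,dt-\bigl(\mathcal{J}_2(m)-\mathcal{J}_2(\bar{m})\bigr)$. Since $\bar{\gamma}=\bsym{\gamma}[\bar{m}]$ and both $(\bar{m},\bar{w})$ (by Lemma \ref{prop:variational-MFG}) and $(m,w)$ (by hypothesis) lie in $\mathcal{R}$, inequality \eqref{LP23_lem24_ineq1} of Proposition \ref{LP23_lem24} with $m_1=\bar{m}$, $m_2=m$ says precisely that $\iint_Q\bar{\gamma}\,(m-\bar{m})\,dx\,dt\le\mathcal{J}_2(m)-\mathcal{J}_2(\bar{m})$, so $\sigma\le\eps$; and $\eps\ge0$ because $(\bar{m},\bar{w})$ minimizes $\mathcal{J}$ over $\mathcal{R}$ by Lemma \ref{prop:variational-MFG}. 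Since $r>1$ and $t\mapsto t^{1/r}$, $t\mapsto\sqrt t$ are nondecreasing on $[0,\infty)$, plugging $0\le\sigma\le\eps$ into the displayed bounds yields the claimed estimates.

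There is essentially no serious obstacle at this stage: the proposition is a repackaging of Propositions \ref{LP23_prop21} and \ref{LP23_lem24}, with no new regularity or a priori estimates needed. If I had to name the one conceptual point, it is the recognition that the monotonicity of $f$, already absorbed into \eqref{LP23_lem24_ineq1}, is exactly what forces the partially linearized gap $\sigma$ (measured through the auxiliary functional $\mathcal{Z}[\bar{\gamma}]$) to lie below the genuine optimality gap $\eps$.
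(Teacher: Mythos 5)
Your proof is correct and follows essentially the same route as the paper: the paper deduces $\sigma\le\eps$ by citing Proposition \ref{LP23_cor25} (which is exactly the inline computation you perform from \eqref{LP23_lem24_ineq1}, adding the $\mathcal{J}_1$ differences) and then applies Proposition \ref{LP23_prop21} with $\widehat{\gamma}=\bar{\gamma}$. Your extra verification that $\bsym{m}[\bar{\gamma}]=\bar{m}$ and $\bsym{w}[\bar{\gamma}]=\bar{w}$ is a detail the paper leaves implicit, not a difference in approach.
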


\begin{proof}
By applying Proposition \ref{LP23_cor25} with $(m_1, w_1) = (\bar{m}, \bar{w}), (m_2, w_2) = (m, w)$, we have $\sigma \le \eps$. Applying Proposition \ref{LP23_prop21} with $\widehat{\gamma} = \bar{\gamma}$, we complete the proof.
\end{proof}

With the above preliminaries, we are now able to present the following proofs.

\begin{proof}[Proof of Proposition \ref{LP23_lem6}]
The unique existence of a solution of \eqref{best} is a consequence of Lemmas \ref{LP23_lem13}--\ref{LP23_lem17}. 
Applying Lemma \ref{LP23_lem19} with $\widehat{\gamma} = \gamma$, $\widehat{m} := \bsym{m}[\gamma]$, and $\widehat{w} := \bsym{w}[\gamma]$, for any $(m, w) \in \mathcal{R}$, we have
$$
\mathcal{Z}[\gamma](m, w) - \mathcal{Z}[\gamma](\widehat{m}, \widehat{w}) \ge \frac{1}{2C_0} \iint_Q |v-\widehat{v}|^2 m \, dx\, dt \ge 0.
$$
Here, we have used that $m$ is non-negative by Lemma \ref{LP23_lem13}. Thus, $(\widehat{m}, \widehat{w})$ is a minimizer of $\mathcal{Z}[\gamma]$. For the uniqueness of the solution, we apply Proposition \ref{LP23_prop21}. Let
$$
\sigma := \mathcal{Z}[\gamma](m, w) - \mathcal{Z}[\gamma](\widehat{m}, \widehat{w}) \ge 0,
$$
then there exist constants $p \ge 2$ and $C > 0$ such that
$$
\|m - \widehat{m}\|_{{L^2(0, T; L^{\infty}(\T^d))}} \le C \sigma^{\frac{1}{r}}, \qquad \|w - \widehat{w}\|_{L^2(Q; \R^d)} \le C \left(\sqrt{\sigma} + \sigma^{\frac{1}{r}}\right).
$$
If $(m, w) \in \mathcal{R}$ is also a minimizer of $\mathcal{Z}[\gamma]$, $\sigma = 0$. Then, we have $(m, w) = (\widehat{m}, \widehat{w})$.
\end{proof}

\begin{proof}[Proof of Theorem \ref{LP23_thm7}]
In view of Theorem \ref{LP23_prop23}, there exists a constant $C > 0$ such that $\gamma_k \in \Gamma_C$, $\|\bar{m}_k\|_{W^{1, 2, q}(Q)} \le C$ and $\|m_k\|_{W^{1, 2, q}(Q)}\le C$. Applying Proposition \ref{LP23_cor22} with $R = C$ and $(m, w) = (\bar{m}_k, \bar{w}_k)$, we obtain
\eqref{LP23_thm_ineq1}. 
Lemma \ref{LP23_lem18} with $(m_1, w_1) = (\bar{m}, \bar{w})$ and $(m_2, w_2) = (\bar{m}_k, \bar{w}_k)$ gives
\begin{gather*}
\|\gamma_k-\bar{\gamma}\|_{L^2(0, T; L^{\infty}(\T^d))} \le C \|\bar{m}_k - \bar{m}\|_{L^2(0, T; L^{\infty}(\T^d))}.
\end{gather*}
Combining this with (\ref{LP23_thm_ineq1}), we deduce (\ref{LP23_thm_ineq2}).

We apply Lemma \ref{LP23_lem15} with $R=C, \gamma_1 = \bar{\gamma}, \gamma_2 = \gamma_k$ to derive
$$
\|u_k - \bar{u}\|_{L^{\infty}(Q)} \le C \|\gamma_k-\bar{\gamma}\|_{L^2(0, T; L^{\infty}(\T^d))},
$$
which, together with (\ref{LP23_thm_ineq2}), implies (\ref{LP23_thm_ineq3}).

(\ref{LP23_thm_ineq4}) is a consequence of Proposition \ref{LP23_prop21} with $R=C, \widehat{\gamma} = \gamma_k$ and $(m, w) = (m_k, w_k)$.

Finally, the boundedness of $\eps_k$ follows from the boundedness of $\mathcal{J}(\bar{m}_k, \bar{w}_k)$, which is deduced from Theorem \ref{LP23_prop23}.
\end{proof}

\section{Proofs of Theorems \ref{LP23_thm8-1} and \ref{LP23_thm8-2}}
\label{sec:proof-rate}

This section is devoted to the study of the explicit convergence rates of $\eps_k$ and $\sigma_k$. We begin by establishing a basic relationship between $\eps_k$ and $\sigma_k$. 

\begin{lemma} \label{LP23_lem26}
We have $\eps_k \le \sigma_k$ for all $k = 0, 1, 2, \dots$.
\end{lemma}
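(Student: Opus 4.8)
The plan is to derive the inequality from a single application of Proposition~\ref{LP23_cor25} together with the minimality characterization of $(m_k,w_k)$ furnished by Proposition~\ref{LP23_lem6}. First I would collect the feasibility facts needed: by Theorem~\ref{LP23_prop23}(ii), $(\bar{m}_k,\bar{w}_k)\in\mathcal{R}$; by Lemma~\ref{prop:variational-MFG}, $(\bar{m},\bar{w})\in\mathcal{R}$; and by construction $\gamma_k=\bsym{\gamma}[\bar{m}_k]=f(\cdot,\cdot,\bar{m}_k)$.

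Next I would apply Proposition~\ref{LP23_cor25} with $(m_1,w_1)=(\bar{m}_k,\bar{w}_k)$ — so that the coupling term attached to it is exactly $\bsym{\gamma}[\bar{m}_k]=\gamma_k$ — and $(m_2,w_2)=(\bar{m},\bar{w})$. This yields
$$
\mathcal{Z}[\gamma_k](\bar{m},\bar{w})-\mathcal{Z}[\gamma_k](\bar{m}_k,\bar{w}_k)\le \mathcal{J}(\bar{m},\bar{w})-\mathcal{J}(\bar{m}_k,\bar{w}_k)=-\eps_k,
$$
where the last equality is the definition of the optimality gap (Definition~\ref{def:gap}), using that $(\bar{m},\bar{w})$ is the minimizer of $\mathcal{J}$ over $\mathcal{R}$. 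Rearranging gives $\eps_k\le \mathcal{Z}[\gamma_k](\bar{m}_k,\bar{w}_k)-\mathcal{Z}[\gamma_k](\bar{m},\bar{w})$.

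Finally I would invoke Proposition~\ref{LP23_lem6}: $(m_k,w_k)$ is the (unique) minimizer of $\mathcal{Z}[\gamma_k]$ over $\mathcal{R}$, so since $(\bar{m},\bar{w})\in\mathcal{R}$ we have $\mathcal{Z}[\gamma_k](\bar{m},\bar{w})\ge \mathcal{Z}[\gamma_k](m_k,w_k)$. Substituting this into the previous bound and recalling the definition \eqref{eq:exploitability} of the exploitability, I obtain
$$
\eps_k\le \mathcal{Z}[\gamma_k](\bar{m}_k,\bar{w}_k)-\mathcal{Z}[\gamma_k](m_k,w_k)=\sigma_k,
$$
which is the claim.

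I do not expect any genuine obstacle here: once Propositions~\ref{LP23_lem6} and~\ref{LP23_cor25} are available, the argument is essentially a two-line manipulation. The only point requiring a moment's care is verifying that Proposition~\ref{LP23_cor25} truly applies — that is, that both $(\bar{m}_k,\bar{w}_k)$ and $(\bar{m},\bar{w})$ lie in $\mathcal{R}$ and that the reference coupling term there coincides with $\gamma_k=\bsym{\gamma}[\bar{m}_k]$ — which is precisely what the feasibility bookkeeping in the first step supplies. Implicitly, the chain rests on the Bregman-type inequality \eqref{LP23_lem24_ineq1}, i.e.\ on the fact that $\mathcal{J}_2$ lies above its partial linearization at $\bar{m}_k$; this is the substantive ingredient behind the otherwise routine estimate.
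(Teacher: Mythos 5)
Your argument is correct and is essentially the paper's own proof: the paper likewise applies Proposition~\ref{LP23_cor25} with $(m_1,w_1)=(\bar{m}_k,\bar{w}_k)$ (so that the reference coupling is $\gamma_k$) and a competitor in $\mathcal{R}$, and then uses the minimality of $(m_k,w_k)$ for $\mathcal{Z}[\gamma_k]$ from Proposition~\ref{LP23_lem6}. Your feasibility bookkeeping matches what the paper implicitly relies on, so there is nothing to add.
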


\begin{proof}
This lemma is taken from \cite[Lemmas 26]{LP23}. 
Particularly, choosing $(m_1, w_1) = (\bar{m}_k, \bar{w}_k)$ and $(m_2, w_2) = (m, w) \in \mathcal{R}$ in Corollary \ref{LP23_cor25} immediately gives the desired inequality. 
\end{proof}

\begin{prop} \label{LP23_lem27-1}
For any $\delta \in [0, 1]$ and $k = 0, 1, 2, \dots$, we have
\begin{equation}
\mathcal{J}(\bar{m}_k^{\delta}, \bar{w}_k^{\delta}) \le \mathcal{J}(\bar{m}_k, \bar{w}_k) - \delta \sigma_k + \delta^2 L_f \|m_k - \bar{m}_k\|_{L^2(0, T; L^{\infty}(\T^d))}^2,
\label{eq:62b}    
\end{equation}
where $(\bar{m}_k^{\delta}, \bar{w}_k^{\delta})$ is defined by \eqref{eq:mdel}. 
\end{prop}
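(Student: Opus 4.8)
The plan is to split $\mathcal{J}=\mathcal{J}_1+\mathcal{J}_2$ and estimate the two pieces separately along the segment $(\bar{m}_k^{\delta},\bar{w}_k^{\delta})=(1-\delta)(\bar{m}_k,\bar{w}_k)+\delta(m_k,w_k)$, combining the convexity of $\mathcal{J}_1$ on $\mathcal{R}$ with the potential-gap inequality \eqref{LP23_lem24_ineq1a} of Proposition \ref{LP23_lem24} for $\mathcal{J}_2$.

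First I would record that $(\bar{m}_k,\bar{w}_k)$ and $(m_k,w_k)$ both lie in $\mathcal{R}$ (Lemma \ref{LP23_lem5} together with the proof of Theorem \ref{LP23_prop23}(ii)), hence so does the convex combination $(\bar{m}_k^{\delta},\bar{w}_k^{\delta})$; in particular $\bar{m}_k^{\delta}(t)\in\mathcal{D}_1(\T^d)$, so all terms below are well defined. Since $\mathcal{J}_1$ is convex in $(m,w)$ on $\mathcal{R}$ — it is the space--time integral of the perspective of the convex running cost $v\mapsto L(t,x,v)=\tfrac12|v-h(t,x)|^2$, plus a term linear in $m$ — convexity gives
\[
\mathcal{J}_1(\bar{m}_k^{\delta},\bar{w}_k^{\delta})\le \mathcal{J}_1(\bar{m}_k,\bar{w}_k)+\delta\bigl[\mathcal{J}_1(m_k,w_k)-\mathcal{J}_1(\bar{m}_k,\bar{w}_k)\bigr].
\]

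Next I would apply the upper bound \eqref{LP23_lem24_ineq1a} with $m_1=\bar{m}_k$ — so that $\gamma_1=\bsym{\gamma}[\bar{m}_k]=\gamma_k$ by \eqref{GCGg} — and $m_2=\bar{m}_k^{\delta}$. Using $\bar{m}_k^{\delta}-\bar{m}_k=\delta(m_k-\bar{m}_k)$, the linear term picks up a factor $\delta$ and the product term a factor $\delta^2$, so
\[
\mathcal{J}_2(\bar{m}_k^{\delta})-\mathcal{J}_2(\bar{m}_k)\le \delta\iint_Q\gamma_k(m_k-\bar{m}_k)\,dx\,dt+\delta^2 L_f\int_0^T\|m_k(t)-\bar{m}_k(t)\|_{L^1(\T^d)}\|m_k(t)-\bar{m}_k(t)\|_{L^\infty(\T^d)}\,dt .
\]
The step where the choice of norm enters — and, I expect, the only one needing a moment's care — is that $\T^d$ has unit Lebesgue measure, so $\|\cdot\|_{L^1(\T^d)}\le\|\cdot\|_{L^\infty(\T^d)}$ and the last integral is bounded by $\|m_k-\bar{m}_k\|_{L^2(0,T;L^\infty(\T^d))}^2$.

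Finally I would add the two estimates: the $O(\delta)$ contributions combine into $\delta\bigl[\mathcal{Z}[\gamma_k](m_k,w_k)-\mathcal{Z}[\gamma_k](\bar{m}_k,\bar{w}_k)\bigr]=-\delta\sigma_k$ by the definition of $\mathcal{Z}[\gamma_k]$ and of the exploitability (Definition \ref{def:ex}), while the $O(\delta^2)$ contribution is exactly $\delta^2 L_f\|m_k-\bar{m}_k\|_{L^2(0,T;L^\infty(\T^d))}^2$, which gives \eqref{eq:62b}. There is no serious obstacle here: the argument is a routine convexity-plus-potential-gap computation, and the only points requiring care are verifying $(\bar{m}_k^{\delta},\bar{w}_k^{\delta})\in\mathcal{R}$ (so that $\mathcal{J}_2$ makes sense along the whole segment) and the elementary $L^1$--$L^\infty$ comparison on the unit-measure torus that produces the $L^2(0,T;L^\infty(\T^d))$ norm on the right-hand side rather than the $L^\infty(0,T;L^2(\T^d))$ norm used in \cite{LP23}.
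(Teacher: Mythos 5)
Your proposal is correct and follows essentially the same route as the paper's proof: convexity of $\mathcal{J}_1$ along the segment, the upper bound \eqref{LP23_lem24_ineq1a} of Proposition \ref{LP23_lem24} applied with $m_1=\bar{m}_k$, $m_2=\bar{m}_k^{\delta}$ (with $\gamma_1=\gamma_k$), recombination of the $O(\delta)$ terms into $-\delta\sigma_k$, and the bound $D_k\le\|m_k-\bar{m}_k\|_{L^2(0,T;L^\infty(\T^d))}^2$ via $\|\cdot\|_{L^1(\T^d)}\le\|\cdot\|_{L^\infty(\T^d)}$ on the unit-measure torus. Your explicit verification that $(\bar{m}_k^{\delta},\bar{w}_k^{\delta})\in\mathcal{R}$ is a point the paper leaves implicit, but it changes nothing in substance.
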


\begin{proof}
Since $\mathcal{J}_1$ is convex,
\begin{equation} \label{LP23_lem27_eq1}
\mathcal{J}_1(\bar{m}_k^{\delta}, \bar{w}_k^{\delta}) - \mathcal{J}_1(\bar{m}_k, \bar{w}_k) \le \delta\left[\mathcal{J}_1(m_k, w_k) - \mathcal{J}_1(\bar{m}_k, \bar{w}_k)\right].
\end{equation}
Substituting $(m_1, w_1) = (\bar{m}_k, \bar{w}_k)$ and $(m_2, w_2) = (\bar{m}_k^{\delta}, \bar{w}_k^{\delta})$ for Lemma \ref{LP23_lem24}, \eqref{LP23_lem24_ineq1a}, we get
$$
\mathcal{J}_2(\bar{m}_k^{\delta}) - \mathcal{J}_2(\bar{m}_k) \le \delta \iint_Q \gamma_k (m_k - \bar{m}_k)\, dx\, dt + \delta^2 L_f D_k,
$$
where $D_k$ is define as \eqref{eq:s3-dk}. 
This, together with  \eqref{LP23_lem27_eq1}, implies
\begin{align*}
&\mathcal{J}(\bar{m}_k^{\delta}, \bar{w}_k^{\delta}) - \mathcal{J}(\bar{m}_k, \bar{w}_k)\\
&\le \delta\left[\mathcal{J}_1(m_k, w_k) - \mathcal{J}_1(\bar{m}_k, \bar{w}_k) + \iint_Q \gamma_k (m_k - \bar{m}_k)\, dx\, dt\right] + \delta^2 L_f D_k\\
&= - \delta \sigma_k + \delta^2 L_f D_k,
\end{align*}
where we have used the fact that $\sigma_k$ is expressed as 
\[
\sigma_k 
= \mathcal{J}_1(\bar{m}_k, \bar{w}_k) - \mathcal{J}_1(m_k, w_k) + \iint_Q \gamma_k (\bar{m}_k-m_k)\, dxdt.
\]
Noting that $
D_k \le  \|m_k - \bar{m}_k\|_{L^2(0, T; L^{\infty}(\T^d))}^2$, 
we derive \eqref{eq:62b}. 
\end{proof}

\begin{prop} 
\label{LP23_lem27-2}
There exists a constant $C>0$ such that 
\begin{align} 
\|m_k - \bar{m}_k\|_{L^2(0, T; L^{\infty}(\T^d))} 
& \le C\eps_k^{\frac{1}{r(r-1)}},\label{eq:mkmk}\\
\sigma_k &\le C \eps_k^{{\frac{2}{r(r-1)}}}
\label{LP23_lem27-2_ineq3}
\end{align}
for all $k = 0, 1, 2, \dots$. 
\end{prop}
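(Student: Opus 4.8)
The plan is to close a bootstrap inequality for $\sigma_k$ and then read off both estimates. Throughout, Theorem~\ref{LP23_prop23} supplies $k$-independent bounds on all the sequences, so every lemma and proposition invoked below applies with uniform constants; in particular $(\bar{m}_k,\bar{w}_k)$, $(m_k,w_k)$ and $(\bar{m},\bar{w})$ all belong to $\mathcal{R}$.

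First I would split $\sigma_k$ according to the two coupling terms $\gamma_k=\bsym{\gamma}[\bar{m}_k]$ and $\bar{\gamma}=\bsym{\gamma}[\bar{m}]$. Since $\mathcal{Z}[\gamma](m,w)=\mathcal{J}_1(m,w)+\iint_Q\gamma m\,dx\,dt$, a direct computation gives
$$
\sigma_k=\bigl(\mathcal{Z}[\bar{\gamma}](\bar{m}_k,\bar{w}_k)-\mathcal{Z}[\bar{\gamma}](m_k,w_k)\bigr)+\iint_Q(\gamma_k-\bar{\gamma})(\bar{m}_k-m_k)\,dx\,dt.
$$
For the first bracket, minimality of $(\bar{m},\bar{w})$ for $\mathcal{Z}[\bar{\gamma}]$ over $\mathcal{R}$ (Proposition~\ref{LP23_lem6} with $\gamma=\bar{\gamma}$) gives $\mathcal{Z}[\bar{\gamma}](m_k,w_k)\ge\mathcal{Z}[\bar{\gamma}](\bar{m},\bar{w})$, while Proposition~\ref{LP23_cor25} with $(m_1,w_1)=(\bar{m},\bar{w})$, $(m_2,w_2)=(\bar{m}_k,\bar{w}_k)$, $\gamma_1=\bar{\gamma}$ gives $\mathcal{Z}[\bar{\gamma}](\bar{m}_k,\bar{w}_k)-\mathcal{Z}[\bar{\gamma}](\bar{m},\bar{w})\le\mathcal{J}(\bar{m}_k,\bar{w}_k)-\mathcal{J}(\bar{m},\bar{w})=\eps_k$; hence the first bracket is $\le\eps_k$. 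For the cross term, Hölder's inequality and $|\T^d|=1$ give $\iint_Q(\gamma_k-\bar{\gamma})(\bar{m}_k-m_k)\,dx\,dt\le\|\gamma_k-\bar{\gamma}\|_{L^2(0,T;L^\infty(\T^d))}\,\|m_k-\bar{m}_k\|_{L^2(0,T;L^\infty(\T^d))}$, and \eqref{LP23_thm_ineq2} bounds the first factor by $C\eps_k^{1/r}$. Writing $\Delta_k:=\|m_k-\bar{m}_k\|_{L^2(0,T;L^\infty(\T^d))}$, this gives the key inequality $\sigma_k\le\eps_k+C\eps_k^{1/r}\Delta_k$.

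Second, I would insert \eqref{LP23_thm_ineq4}, i.e.\ $\Delta_k\le C\sigma_k^{1/r}$, to obtain the closed inequality $\sigma_k\le\eps_k+C\eps_k^{1/r}\sigma_k^{1/r}$. Applying the weighted Young inequality with conjugate exponents $r/(r-1)$ and $r$ to the product $\eps_k^{1/r}\sigma_k^{1/r}$, choosing the weight so that the resulting $\sigma_k$-coefficient equals $\tfrac12$ (possible since $1/r\le\tfrac12$), and absorbing that term into the left-hand side, yields $\sigma_k\le C(\eps_k+\eps_k^{1/(r-1)})$. Since $\eps_k\le C$ (Theorem~\ref{LP23_thm7}) and $1\ge 1/(r-1)$ for $r\ge2$, this gives $\sigma_k\le C\eps_k^{1/(r-1)}$; then $\Delta_k\le C\sigma_k^{1/r}\le C\eps_k^{1/(r(r-1))}$, which is \eqref{eq:mkmk}, and since $1/(r-1)\ge 2/(r(r-1))$ for $r\ge2$, one more absorption gives $\sigma_k\le C\eps_k^{2/(r(r-1))}$, which is \eqref{LP23_lem27-2_ineq3}. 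Every exponent comparison here uses $\eps_k^a=\eps_k^b\,\eps_k^{a-b}$ with $a\ge b\ge0$ together with $\eps_k^{a-b}\le C$.

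The only delicate point is the bootstrap: it works precisely because $\Delta_k$ enters \eqref{LP23_thm_ineq4} with the $1/r$-power, so that Young's inequality leaves a residual $\eps_k$-exponent $(1/r)\cdot\frac{r}{r-1}=1/(r-1)$, which after a single further use of $\Delta_k\le C\sigma_k^{1/r}$ is exactly the $1/(r(r-1))$ claimed in \eqref{eq:mkmk}. The remaining ingredients — the algebraic splitting of $\sigma_k$, the appeal to Proposition~\ref{LP23_cor25}, and the Hölder estimate of the cross term — are routine.
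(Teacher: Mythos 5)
Your proof is correct, but it follows a genuinely different route from the paper's. The paper first establishes \eqref{eq:mkmk} directly: it applies the coercivity estimate of Proposition~\ref{LP23_prop21} to $\|m_k-\bar{m}\|_{L^2(0,T;L^{\infty}(\T^d))}^{r}$, splits $\mathcal{Z}[\bar{\gamma}](m_k,w_k)-\mathcal{Z}[\bar{\gamma}](\bar{m},\bar{w})$ using the minimality of $(m_k,w_k)$ for $\mathcal{Z}[\gamma_k]$, bounds the remaining cross term by the Lipschitz condition \textup{(f-L)} to get $\|m_k-\bar{m}\|\le C\|\bar{m}_k-\bar{m}\|^{1/(r-1)}\le C\eps_k^{1/(r(r-1))}$, and then obtains \eqref{LP23_lem27-2_ineq3} from Proposition~\ref{LP23_lem27-1} with $\delta=1$. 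You instead decompose the exploitability $\sigma_k$ itself, bound the first bracket by $\eps_k$ via the minimality of $(\bar{m},\bar{w})$ for $\mathcal{Z}[\bar{\gamma}]$ together with Proposition~\ref{LP23_cor25}, control the cross term by H\"older and \eqref{LP23_thm_ineq2}, and close the resulting bootstrap $\sigma_k\le\eps_k+C\eps_k^{1/r}\sigma_k^{1/r}$ with \eqref{LP23_thm_ineq4} and a weighted Young absorption; all the ingredients you invoke are available at this point of the paper (Theorem~\ref{LP23_thm7} precedes this proposition), so there is no circularity, and the identification of $(\bar{m},\bar{w})$ as the minimizer of $\mathcal{Z}[\bar{\gamma}]$ is exactly how the paper uses Proposition~\ref{LP23_lem6} elsewhere. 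What each route buys: the paper's argument uses \textup{(f-L)} and Proposition~\ref{LP23_lem27-1} directly and also produces the auxiliary estimate on $\|m_k-\bar{m}\|$, whereas yours avoids Proposition~\ref{LP23_lem27-1} entirely (the Lipschitz property enters only through \eqref{LP23_thm_ineq2}) and in fact yields the slightly sharper intermediate bound $\sigma_k\le C\eps_k^{1/(r-1)}$, which coincides with the claimed exponent $2/(r(r-1))$ when $r=2$ and is stronger for $r>2$; the stated inequalities then follow exactly as you say, using $\eps_k\le C$ for the exponent comparisons.
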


\begin{proof}
In view of Proposition \ref{LP23_prop21}, we calculate as 
\begin{align*}
C \|m_k - \bar{m}\|_{L^2(0, T; L^{\infty}(\T^d))}^{r}
&\le \ \mathcal{Z}[\bar{\gamma}](m_k, w_k) - \mathcal{Z}[\bar{\gamma}](\bar{m}, \bar{w})\\
&= \ \iint_Q \left(\bsym{\gamma}[\bar{m}]-\bsym{\gamma}[\bar{m}_k]\right)(m_k-\bar{m})\,dx\,dt \\
& \mbox{ }\quad + \underbrace{\iint_Q \bsym{\gamma}[\bar{m}_k](m_k-\bar{m})\,dx\,dt\mathcal{J}_1(m_k, w_k) - \mathcal{J}_1(\bar{m}, \bar{w})}_{=\mathcal{Z}[\gamma_k](m_k, w_k) - \mathcal{Z}[\gamma_k](\bar{m}, \bar{w}) \le 0}\\
&\le \ \iint_Q \left(\bsym{\gamma}[\bar{m}]-\bsym{\gamma}[\bar{m}_k]\right)(m_k-\bar{m})\,dx\,dt.
\end{align*}
Hence, by the condition \textup{(f-L)}, 
\begin{align*}
\|m_k - \bar{m}\|_{L^2(0, T; L^{\infty}(\T^d))}^{r}
&\le   CL_f \iint_Q \|\bar{m}(t) - \bar{m}_k(t)\|_{L^{\infty}(\T^d)} |m_k(t, x) - \bar{m}(t, x)|\,dx\, dt \\
&\le C\|\bar{m} - \bar{m}_k\|_{L^2(0, T; L^{\infty}(\T^d))} \|m_k - \bar{m}\|_{L^2(0, T; L^{\infty}(\T^d))}.
\end{align*}
(We acknowledge that the proof of this inequality follows the ideas used in the proof of \cite[Lemma~4.8]{KW24}.)
This, together with Theorem \ref{LP23_thm7}, \eqref{LP23_thm_ineq1}, implies
$$
\|{m}_k - \bar{m}\|_{L^2(0, T; L^{\infty}(\T^d))} \le C 
\|\bar{m} - \bar{m}_k\|_{L^2(0, T; L^{\infty}(\T^d))}^{\frac{1}{r-1}}
\le 
C
\eps_k^{\frac{1}{r}\cdot\frac{1}{r-1}},
$$
and, hence, \eqref{eq:mkmk} is derived. 

Now, Lemma \ref{LP23_lem27-1} with $\delta = 1$ yields
\[
\mathcal{J}(\bar{m}, \bar{w}) \le \mathcal{J}(\bar{m}_k^{\delta}, \bar{w}_k^{\delta}) 
\le \mathcal{J}(\bar{m}_k, \bar{w}_k) - \sigma_k + C \eps_k^{\frac{2}{r(r-1)}},
\]
which leads to \eqref{LP23_lem27-2_ineq3}. 
\end{proof}

\begin{prop} \label{KW22_prop4.2}
Assume that $\delta_k$ is given by \textup{(S1)}. Let $q_k$ be a sequence satisfying
$$
0 < q_k \le c \min{\left\{\frac{\tau (1-c) \sigma_k}{L_f \|m_k - \bar{m}_k\|_{L^2(0, T; L^{\infty}(\T^d))}^2}, 1\right\}}
$$
for all $k = 0, 1, 2, \dots$. Then, we have $q_k\le c\delta_k$, and $\eps_{k+1} \le (1-q_k) \eps_k$.    
\end{prop}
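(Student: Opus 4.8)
The plan is to convert the backtracking rule of \textup{(S1)} and the quadratic majorization of Proposition~\ref{LP23_lem27-1} into an explicit lower bound on $\delta_k$ that dominates the bound imposed on $q_k$, and then to conclude using the QAG inequality \eqref{QAGcond} together with the relation $\eps_k\le\sigma_k$ from Lemma~\ref{LP23_lem26}. Throughout I abbreviate $D_k:=\|m_k-\bar{m}_k\|_{L^2(0,T;L^{\infty}(\T^d))}^2$. If $\sigma_k=0$, then $(\bar{m}_k,\bar{w}_k)$ is a minimizer of $\mathcal{Z}[\gamma_k]$, so by uniqueness $(\bar{m}_k,\bar{w}_k)=(m_k,w_k)$; hence $D_k=0$ and, by Lemma~\ref{LP23_lem26}, $\eps_k=0$, so both assertions are trivial. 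I therefore assume $\sigma_k,D_k>0$.

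First I would record a sufficient condition for a trial step to be accepted. By Proposition~\ref{LP23_lem27-1}, with $(\bar{m}_k^{\delta},\bar{w}_k^{\delta})$ as in \eqref{eq:mdel}, one has $\mathcal{J}(\bar{m}_k^{\delta},\bar{w}_k^{\delta})\le\mathcal{J}(\bar{m}_k,\bar{w}_k)-\delta\sigma_k+\delta^2 L_f D_k$ for every $\delta\in[0,1]$; consequently the QAG condition \eqref{QAGcond} holds whenever $\delta^2 L_f D_k\le(1-c)\delta\sigma_k$, that is, whenever $0<\delta\le\theta_k:=(1-c)\sigma_k/(L_f D_k)$.

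Next I would turn this into a lower bound for $\delta_k$. Since $\delta_k=\tau^{i_k}$ with $i_k$ the smallest positive integer for which $\tau^{i_k}$ satisfies \eqref{QAGcond}, and since by the previous step every $\tau^i$ with $\tau^i\le\theta_k$ satisfies \eqref{QAGcond}, I distinguish two cases: if $\theta_k\ge\tau$, then $\delta=\tau$ is admissible, so $i_k=1$ and $\delta_k=\tau$; if $\theta_k<\tau$, then minimality of $i_k$ forces $\tau^{\,i_k-1}>\theta_k$, hence $\delta_k=\tau^{i_k}>\tau\theta_k$. In either case $\delta_k\ge\tau\min\{1,\theta_k\}$, so that $c\delta_k\ge c\tau\min\{1,\theta_k\}$, and comparing with the assumed bound on $q_k$ (and using $\tau\le1$) gives $q_k\le c\delta_k$, which is the first assertion.

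Finally, since $\bar{m}_{k+1}=\bar{m}_k^{\delta_k}$ and $\bar{w}_{k+1}=\bar{w}_k^{\delta_k}$ by the update rule, applying \eqref{QAGcond} at $\delta=\delta_k$ and subtracting $\mathcal{J}(\bar{m},\bar{w})$ from both sides yields $\eps_{k+1}\le\eps_k-c\delta_k\sigma_k$. Using $q_k\le c\delta_k$ and $\sigma_k\ge0$, and then $\sigma_k\ge\eps_k$ from Lemma~\ref{LP23_lem26}, we obtain $c\delta_k\sigma_k\ge q_k\sigma_k\ge q_k\eps_k$, whence $\eps_{k+1}\le(1-q_k)\eps_k$. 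I expect the only delicate point to be the lower bound on the backtracked step size in the third paragraph — in particular the bookkeeping at the transition between the regimes $\theta_k\ge\tau$ and $\theta_k<\tau$; the remaining estimates form a direct chain of inequalities.
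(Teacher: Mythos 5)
Your derivation of the key lower bound on the backtracked step is a genuinely more elementary route than the paper's: where the paper introduces the ratio $W_k(\delta)$, argues its continuity (Remark \ref{rem:Wk}), finds $\widehat{\delta}_k\in[\delta_k,\delta_k/\tau)$ with $W_k(\widehat{\delta}_k)=c$ by the intermediate value theorem, and only then invokes Proposition \ref{LP23_lem27-1}, you use the quadratic majorization directly to show that every $\delta\le\theta_k:=(1-c)\sigma_k/\bigl(L_f\|m_k-\bar{m}_k\|_{L^2(0,T;L^{\infty}(\T^d))}^2\bigr)$ satisfies \eqref{QAGcond}, so that rejection of $\tau^{i_k-1}$ forces $\delta_k>\tau\theta_k$; no continuity input is needed. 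That part, and your final chain $\eps_{k+1}\le\eps_k-c\delta_k\sigma_k\le\eps_k-q_k\eps_k$ via Lemma \ref{LP23_lem26}, are correct and coincide with the paper's argument.

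The gap sits exactly at the spot you flagged: the regime $\theta_k\ge\tau$, where the first trial step is accepted and all you know is $\delta_k=\tau$. Your bound then gives $c\delta_k\ge c\tau\min\{1,\theta_k\}=c\min\{\tau,\tau\theta_k\}$, while the hypothesis only caps $q_k$ by $c\min\{\tau\theta_k,1\}$. Since $\min\{\tau\theta_k,1\}\ge\min\{\tau\theta_k,\tau\}$, with strict inequality whenever $\theta_k>1$, the comparison runs in the wrong direction: if $L_f\|m_k-\bar{m}_k\|_{L^2(0,T;L^{\infty}(\T^d))}^2<(1-c)\sigma_k$ and $q_k$ is chosen near its permitted maximum, then $q_k>c\tau=c\delta_k$, and the claim $q_k\le c\delta_k$ does not follow (nor does it hold under your literal reading of \textup{(S1)}, in which only $i\ge1$ is tested, so $\delta_k\le\tau$ always). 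The paper escapes this by treating the case $\delta_k=1$ separately: its proof implicitly lets the line search start at $\delta=1$, so that when $\theta_k\ge 1$ the accepted step is $\delta_k=1$ and $q_k\le c=c\delta_k$ is trivial, while in the genuine backtracking case the rejection of $\delta_k/\tau$ is available, which is all your contrapositive argument needs. To close your proof you must either adopt that convention for \textup{(S1)} or give a separate estimate in the regime $\theta_k>1$; as written, the bookkeeping at the transition between the two regimes is not merely delicate but invalid.
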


\begin{proof}
It basically follows \cite[Section~4]{KW24}. 
From $\eps_k \le \sigma_k$ and the QAG condition \eqref{QAGcond},
$$
c \delta_k \eps_k \le c \delta_k \sigma_k \le \mathcal{J}(\bar{m}_k, \bar{w}_k) - \mathcal{J}(\bar{m}_{k+1}, \bar{w}_{k+1}) = \eps_k - \eps_{k+1}.
$$
By rearranging, we have
\begin{equation} \label{KW22_prop4.2_ineq0}
\eps_{k+1} \le (1 - c\delta_k) \eps_k.
\end{equation}
If $\delta_k = 1$, then
\begin{equation} \label{KW22_prop4.2_ineq1}
\eps_{k+1} \le (1 - c) \eps_k.
\end{equation}
We now let $\delta_k \in (0, 1)$ and note $\delta_k\le\tau$. 


Then, there exists $\widehat{\delta}_k \in [\delta_k, \delta_k/\tau)$ such that
\begin{equation} \label{KW22_prop4.2_eq2}
c = \frac{\mathcal{J}(\bar{m}_k, \bar{w}_k) - \mathcal{J}\left(\bar{m}_k^{\widehat{\delta}_k}, \bar{w}_k^{\widehat{\delta}_k}\right)}{\widehat{\delta}_k \sigma_k}.
\end{equation}
{To show this, consider the function
$$
W_k(\delta) := \frac{\mathcal{J}(\bar{m}_k, \bar{w}_k) - \mathcal{J}(\bar{m}_k^{\delta}, \bar{w}_k^{\delta})}{\delta \sigma_k}, 
$$
which is contious in $(0,1]$ (see Remark \ref{rem:Wk} below). }
Since
\begin{align}
W_k(\delta_k) &\ge \frac{c \delta_k \sigma_k}{\delta_k \sigma_k} = c, \tag{$\delta_k$ satisfies (\ref{QAGcond})}\\
W_k\left(\frac{\delta_k}{\tau}\right) &< \frac{c \delta_k \sigma_k}{\delta_k \sigma_k} = c, \tag{$\delta_k/\tau$ does not satisfy (\ref{QAGcond})}
\end{align}
we can take $\widehat{\delta}_k \in [\delta_k, \delta/\tau)$ such that $W_k(\widehat{\delta}_k) = c$ by the intermediate value theorem.

Now, by applying Lemma \ref{LP23_lem27-1} to (\ref{KW22_prop4.2_eq2}), we have
\[
c \ge \frac{\widehat{\delta}_k \sigma_k - L_f \widehat{\delta}_k^2 \|m_k - \bar{m}_k\|_{L^2(0, T; L^{\infty}(\T^d))}^2}{\widehat{\delta}_k \sigma_k}.
\]
Therefore, by  $\widehat{\delta}_k<\delta_k/\tau$, we obtain 
$$
1 > \delta_k \ge \frac{\tau(1-c) \sigma_k}{L_f \|m_k - \bar{m}_k\|_{L^2(0, T; L^{\infty}(\T^d))}^2},
$$
which implies that $q_k\le c\delta_k$ holds. 
Using the inequality (\ref{KW22_prop4.2_ineq0}), we have
$$
\eps_{k+1} \le \left(1-\frac{\tau c (1-c) \sigma_k}{L_f \|m_k - \bar{m}_k\|_{L^2(0, T; L^{\infty}(\T^d))}^2}\right) \eps_k.
$$
By combining this inequality with the inequality (\ref{KW22_prop4.2_ineq1}), we obtain the second inequality to be proved. \end{proof}

\begin{rem}
    \label{rem:Wk}
Since $\vhi(\delta) := \mathcal{J}(\bar{m}_k^{\delta}, \bar{w}_k^{\delta})$ is convex and lower semi-continuous on $[0, 1]$, it is continuous on $(0, 1)$ (see \cite[Proposition 2.5]{ET99} for example). Moreover, we infer that $\vhi$ is upper semi-continuous at $\delta =1$, because
$$
\limsup_{\delta \nearrow 1} \vhi(\delta) \le \limsup_{\delta \nearrow 1} [(1-\delta) \vhi(0) + \delta \vhi(1)] = \vhi(1).
$$
Thus, $\vhi$ is continuous at $\delta = 1$ and, consequently,the function $W_k$ is also continuous on $(0, 1]$.
\end{rem}

\begin{lemma}
    \label{KW22_prop4.10}
Let $\alpha \in [1/2, 1), \beta > 0$, and $\gamma \in (0, 1)$. If $h_k$ is a non-negative sequence such that
$$
h_{k+1} \le h_k \max{\left\{1-\beta h_k^\gamma, \alpha \right\}} \qquad (k=0, 1, 2, \dots),
$$
then there exist positive constants $N, M$ such that $h_k \le \dfrac{M}{(k+N)^{\frac{1}{\gamma}}}$. Therein, $N$ and $M$ are given by
$$
N = \frac{2-\alpha^{-\gamma}}{\alpha^{-\gamma}-1}, \qquad M = \max{\left\{h_0 N^{\frac{1}{\gamma}}, \frac{1}{\alpha \left\{\left[\gamma-(1-\gamma)(2^\gamma-1)\right] \beta\right\}^{\frac{1}{\gamma}}}\right\}}.
$$
\end{lemma}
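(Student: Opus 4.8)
The plan is to prove the bound $h_k\le M(k+N)^{-1/\gamma}$ by induction on $k$, the engine being the monotonicity of the one-step map. Write $h^\ast:=\bigl((1-\alpha)/\beta\bigr)^{1/\gamma}$ for the value at which $1-\beta h^\gamma$ and $\alpha$ coincide, and set $\phi(h):=h\max\{1-\beta h^\gamma,\alpha\}$, so that $\phi(h)=\alpha h$ for $h\ge h^\ast$ and $\phi(h)=h-\beta h^{1+\gamma}$ for $h\in[0,h^\ast]$. I would first check that $\phi$ is nondecreasing on $[0,\infty)$: the upper branch has derivative $\alpha>0$; the lower branch has derivative $1-(1+\gamma)\beta h^\gamma$, which over $[0,h^\ast]$ is least at $h=h^\ast$, where it equals $1-(1+\gamma)(1-\alpha)\ge 0$ precisely because $\alpha\ge\tfrac12>\gamma/(1+\gamma)$; and the branches agree at $h^\ast$. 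I would also record that the constants make sense: $\alpha^{-\gamma}<2$ since $\alpha\ge\tfrac12>2^{-1/\gamma}$, so $N>0$; and $\gamma-(1-\gamma)(2^\gamma-1)\ge\gamma-(1-\gamma)\gamma=\gamma^2>0$, using $2^\gamma\le 1+\gamma$ (convexity of $t\mapsto 2^t$). Since $h_{k+1}\le\phi(h_k)$ and, by the inductive hypothesis, $0\le h_k\le M(k+N)^{-1/\gamma}$, monotonicity gives $h_{k+1}\le\phi\bigl(M(k+N)^{-1/\gamma}\bigr)$.

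The inductive step then reduces to one scalar inequality: $\phi\bigl(Mt^{-1/\gamma}\bigr)\le M(t+1)^{-1/\gamma}$ for all $t\ge N$, applied with $t=k+N$ (the base case $k=0$ being exactly $M\ge h_0N^{1/\gamma}$, which holds since $M$ dominates its first listed value). I would prove this by splitting on the position of $b:=Mt^{-1/\gamma}$ relative to $h^\ast$. In the \emph{geometric} regime $b\ge h^\ast$, where $\phi(b)=\alpha b$, the inequality becomes $\alpha^\gamma(t+1)\le t$, and here the explicit shape of $N$ is exploited through $N+1=1/(\alpha^{-\gamma}-1)=\alpha^\gamma/(1-\alpha^\gamma)$, which is exactly the threshold governing that inequality. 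In the \emph{polynomial} regime $b\le h^\ast$, where $\phi(b)=Mt^{-1/\gamma}-\beta M^{1+\gamma}t^{-1-1/\gamma}$, dividing by $t^{-1/\gamma}$ turns the inequality into $t\bigl[1-(t/(t+1))^{1/\gamma}\bigr]\le\beta M^\gamma$; I would bound the left-hand side by a tangent-line/convexity estimate for $s\mapsto(1-s)^{1/\gamma}$ at $s=1/(t+1)$, which on the relevant range of $t$ is controlled by a constant of the form $\gamma-(1-\gamma)(2^\gamma-1)$ and hence absorbed by the second listed value of $M$.

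The part I expect to be the real obstacle is the calibration of $N$ and $M$ at the boundary between the two regimes and for the smallest indices $k$: the lower bound on $t$ forced by the geometric case and the threshold supplied by the polynomial case push against each other, and equality is attained in several of the elementary inequalities involved, so one must work with the exact algebraic expressions rather than order-of-magnitude bounds. Once those scalar estimates are pinned down, the enveloping induction is routine. Since this lemma is quoted from \cite{KW24}, I would also reconcile the precise constants with that reference.
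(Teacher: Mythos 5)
Your overall route (monotonicity of the one-step map $\phi(h)=h\max\{1-\beta h^\gamma,\alpha\}$, then an induction reduced to the scalar inequality $\phi(Mt^{-1/\gamma})\le M(t+1)^{-1/\gamma}$ for $t\ge N$, split into a geometric and a polynomial branch) is exactly the ``elementary induction argument'' the paper has in mind; note the paper itself gives no proof at all, only a citation of \cite{KW24} and of the reference treating $\alpha=1/2$. Your preliminary checks are correct (monotonicity of $\phi$ from $\alpha\ge 1/2>\gamma/(1+\gamma)$, positivity of $N$ and of $\gamma-(1-\gamma)(2^\gamma-1)$), and the polynomial branch does close with the second entry of $M$: by Bernoulli, $1-\bigl(1-\tfrac{1}{t+1}\bigr)^{1/\gamma}\le \tfrac{1}{\gamma t}$, so $\beta M^\gamma\ge 1/\gamma$ suffices there, and the stated $M$ guarantees this.

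The difficulty you flag at the end, however, is not a calibration that can be ``pinned down''; it is a genuine failure of the displayed constants. In the geometric regime the scalar inequality is $\alpha^\gamma(t+1)\le t$, i.e. $t\ge \alpha^\gamma/(1-\alpha^\gamma)$, and with $s:=\alpha^\gamma$ one has $N=(2s-1)/(1-s)=s/(1-s)-1$, so the threshold you yourself identify is $N+1$, one more than what the induction supplies; at $t=N$ the required inequality $s\le N/(N+1)=2-1/s$ is equivalent to $(s-1)^2\le 0$ and fails for every $\alpha<1$. This is fatal rather than cosmetic: take $\alpha=\gamma=1/2$, $\beta=1$, $h_0=100$, $h_1=h_0\max\{1-\beta h_0^{1/2},\alpha\}=50$; then $N=\sqrt{2}$, $M=\max\{200,\ \approx 23.3\}=200$, and the claimed bound at $k=1$ is $200/(1+\sqrt{2})^2\approx 34.3<50$. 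Hence the lemma is false with the stated $N,M$, and no bookkeeping inside your scheme can rescue them; the geometric case forces $N\ge \alpha^\gamma/(1-\alpha^\gamma)=1/(\alpha^{-\gamma}-1)$, after which your induction does go through (the polynomial branch and the base case are unaffected, with $M$ adjusted to $\max\{h_0N^{1/\gamma},\dots\}$ for the enlarged $N$). Since the paper only ever uses the qualitative conclusion that some $N,M>0$ exist, the right fix is either to prove the lemma with the corrected $N$ or to state it without explicit constants; the reconciliation with \cite{KW24} that you defer to the very end is therefore the crux of the proof, not an afterthought.
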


\begin{proof}
This lemma is taken from \cite[Proposition 4.10]{KW24}.
The proof is not provided there, but it follows from an elementary induction argument, similar to the proof of \cite[Theorem 1]{XY18}, which deals with the case $\alpha=1/2$ is considered. 
\end{proof}

We now state the following proofs.  

\begin{proof}[Proof of Theorem \ref{LP23_thm8-1}]
The case $d = 1$ is the same as the proof of \cite[Theorem 8]{LP23}. We consider only the case $d \ge 2$. Recall that we have set $\displaystyle{\rho = 1-\frac{2}{r(r-1)}}$. 

\noindent 1) \emph{ Let $\delta_k$ be defined by \textup{(S1)}.} We define 
\begin{equation}
\label{eq:qk}
q_k = c \min{\left\{\frac{\tau (1-c)}{L_fC_1} \eps_k^\rho, ~1\right\}},
\end{equation}
where we denote the constant $C$ appearing in \eqref{eq:mkmk} by $C_1$. Since, by Lemma \ref{LP23_lem26} and \eqref{eq:mkmk}, 
\[
\frac{\tau (1-c)}{L_fC_1} \eps_k^{1-\frac{2}{r(r-1)}}\le \frac{\tau (1-c) \sigma_k}{L_f \|m_k - \bar{m}_k\|_{L^2(0, T; L^{\infty}(\T^d))}^2},
\]
the sequence $q_k$ fulfills the assumption of Proposition \ref{KW22_prop4.2}. Therefore, we have
$$
\eps_{k+1} \le (1-q_k) \eps_k = \eps_k \max{\left\{1-\frac{\tau c(1-c)}{L_fC_1} \eps_k^\rho, ~1-c\right\}}.
$$
Since $1/2\le 1-c<1$ and $\tau c(1-c)/(L_fC_1)>0$, we can apply Lemma \ref{KW22_prop4.10} and obtain \eqref{eq:rate-d2}

\smallskip

\noindent 2) \emph{Let $\delta_k$ be defined by \textup{(S2)}.} 
We define $q_k$ by \eqref{eq:qk} again. 
In this case, Proposition \ref{KW22_prop4.2} is not available. But, if proving $\eps_{k+1} \le (1-q_k) \eps_k$ directly, we can conclude that  \eqref{eq:rate-d2} holds by the same way as 1) above. We prove that this inequality holds by arguing by contradiction. Suppose that $\eps_{k+1} > (1-q_k) \eps_k$ holds. Then, 
\begin{align*}
\mathcal{J}\left(\bar{m}_k^{\delta_k}, \bar{w}_k^{\delta_k}\right) &> (1-q_k) \mathcal{J}(\bar{m}_k, \bar{w}_k) + q_k \mathcal{J}(\bar{m}, \bar{w})\\
&= \mathcal{J}(\bar{m}_k, \bar{w}_k) - q_k \left(\mathcal{J}(\bar{m}_k, \bar{w}_k)-\mathcal{J}(\bar{m}, \bar{w})\right)\\
&\ge \mathcal{J}(\bar{m}_k, \bar{w}_k) - q_k \sigma_k.
\end{align*}
Let $\delta_k^{\mathrm{QAG}}$ be the step-size defined by (S1). We have derived $q_k \le c \delta_k^{\textup{QAG}}$ in Lemma \ref{KW22_prop4.2}. Thanks to \eqref{QAGcond}, we observe
\begin{equation*}
\mathcal{J}\left(\bar{m}_k^{\delta_k}, \bar{w}_k^{\delta_k}\right) > \mathcal{J}(\bar{m}_k, \bar{w}_k) - c \delta_k^{\textup{QAG}} \sigma_k
\ge \mathcal{J}\left(\bar{m}_k^{\delta_k^{\textup{QAG}}}, \bar{w}_k^{\delta_k^{\textup{QAG}}}\right).
\end{equation*}
This contradicts the fact that $\delta_k$ minimizes $\mathcal{J}(\bar{m}_k^{\delta}, \bar{w}_k^{\delta})$. Therefore, we get $\eps_{k+1} \le (1-q_k) \eps_k$. 

\smallskip

\noindent 3) \emph{ Let $\delta_k$ be defined by \textup{(S3)}.} Set $a_k=L_f D_k$. If $\sigma_k \ge 2 a_k$, then we have $\delta_k = 1$. By Lemma \ref{LP23_lem27-1}, 
$$
\eps_{k+1} \le \eps_k - \sigma_k + a_k \le \eps_k - \frac{1}{2} \sigma_k \le \frac{1}{2} \eps_k.
$$
In the case of $\sigma_k < 2 a_k$, we have $\delta_k = \sigma_k/2a_k$. 
Note that $a_k \le C \eps_k^{\frac{2}{r(r-1)}}$. 
We apply Lemma \ref{LP23_lem27-1} again and deduce
\[
\eps_{k+1} 
\le \eps_k - \frac{\sigma_k^2}{2a_k} + \frac{\sigma_k^2}{4 a_k} 
\le \eps_k \left(1-\frac{1}{4C} \eps_k^\rho\right).
\]
Therefore, we obtain
$$
\eps_{k+1} \le \eps_k \max{\left\{1-\frac{1}{4C} \eps_k^\rho, ~\frac{1}{2}\right\}}.
$$
Application of Lemma \ref{KW22_prop4.10} with $\alpha = 1/2, \beta = 1/(4C)$ leads to the desired inequality.
\end{proof}


\begin{proof}[Proof of Theorem \ref{LP23_thm8-2}]
We apply Proposition~\ref{LP23_lem27-1} with $\delta=\delta_k$ to obtain
\begin{equation*}
\varepsilon_{k+1}
\le \varepsilon_k
- \delta_k \sigma_k
+ \delta_k^2 L_f \|m_k - \bar{m}_k\|_{L^2(0,T;L^\infty(\T^d))}^2.
\end{equation*}
Together with Lemma~\ref{LP23_lem26} and Proposition~\ref{LP23_lem27-2}, this implies \eqref{thm8-2_eq}.

We prove the case $d = 1$. To this end, we will first derive, for $\delta_k \in [0, 1]$,
\begin{equation}
\label{eq:th83a}
\eps_k \le \eps_0 \exp{\left[\sum_{j=0}^{k-1} (C_{\ast} \delta_j^2 - \delta_j)\right]}.
\end{equation}
Taking $p = 2$ in \eqref{thm8-2_eq} leads to 
$\eps_{k+1} \le (1-\delta_k + C_{\ast} \delta_k^2) \eps_k$. 
Consequently, 
$$
\eps_k \le \eps_0 \prod_{j=0}^{k-1} (1-\delta_j+C_{\ast} \delta_j^2) = \eps_0 \exp{\left[\sum_{j=0}^{k-1} \log{(1-\delta_j+C_{\ast} \delta_j^2)}\right]},
$$
which, together with an elementary inequality $\log{s} < s-1$ for $s > 0$, implies \eqref{eq:th83a}. 
Using the explicit expression of $\delta_k$, we estimate the right-hand side of \eqref{eq:th83a} as follows:
\begin{align*}
\sum_{j=0}^{k-1} \left(\frac{k_2}{j+k_1}\right)^2 &= k_2^2 \left(\frac{1}{k_1^2} + \sum_{j=1}^{k-1} \int_{j-1}^j \frac{ds}{(j+k_1)^2}\right)\\
&\le k_2^2 \left(\frac{1}{k_1^2} + \int_0^{\infty} \frac{ds}{(s+k_1)^2}\right)
= \frac{k_2^2}{k_1^2}(k_1+1),\\
\sum_{j=0}^{k-1} \frac{k_2}{j+k_1} &= \sum_{j=0}^{k-1} \int_j^{j+1} \frac{k_2}{j+k_1}\, ds\\
&\ge \int_0^k \frac{k_2}{s+k_1}\,ds = k_2 \log{\frac{k+k_1}{k_1}}.
\end{align*}
By rearranging, we completes the proof for the case $d = 1$.

\medskip

We proceed to the proof of the case $d \ge 2$. It will be done by induction. In order for the case $k=0$ to be valid, it must be that $N \ge \eps_0 k_1^s$. In general, assuming it holds for $k$, we consider the case $k+1$. Using \eqref{thm8-2_eq}, we estimate as 
\begin{align*}
\eps_{k+1} &\le \left(1-\frac{k_2}{k+k_1}\right) \frac{N}{(k+k_1)^s} + C_{\ast} \left(\frac{k_2}{k+k_1}\right)^2 \frac{N^{1-\rho}}{(k+k_1)^{(1-\rho)s}}\\
&= \left[\left(1-\frac{k_2}{k+k_1}\right)\left(\frac{k+k_1+1}{k+k_1}\right)^s + \frac{C_{\ast}}{N^\rho} \left(\frac{k_2}{k+k_1}\right)^2 \frac{(k+k_1+1)^s}{(k+k_1)^{(1-\rho)s}}\right] \frac{N}{(k+k_1+1)^s}.
\end{align*}
If $s \le 1/\rho$, we have
$
(k+k_1)(k+k_1)^{(1-\rho)s} 
\ge (k+k_1)^s$, 
and thus,
\begin{align*}
\eps_{k+1} &\le \left[\left(1-\frac{k_2}{k+k_1}\right)\left(\frac{k+k_1+1}{k+k_1}\right)^s + \frac{C_{\ast}}{N^\rho} \frac{k_2^2}{k+k_1} \frac{(k+k_1+1)^s}{(k+k_1)^s}\right]\frac{N}{(k+k_1+1)^s}\\
&\le \underbrace{\left(1-\frac{k_2 - C_{\ast} N^{-\rho} k_2^2}{k+k_1}\right) \left(1 + \frac{1}{k+k_1}\right)^s}_{=: \eta(k)} \frac{N}{(k+k_1+1)^s}.
\end{align*}
Set $a_N := k_2 - C_{\ast} N^{-\rho} k_2^2$. We have
$$
\frac{d}{dk}\eta(k) = \frac{1}{(k+k_1)^2}\left[\frac{a_N(s+1)}{k+k_1} + (a_N - s)\right] \left(1 + \frac{1}{k+k_1}\right)^{s-1}.
$$
If $a_N \ge s$, that is, if $s < k_2$ and $N \ge \left[C_{\ast} k_2^2/(k_2-s)\right]^{1/\rho}$, then $d\eta/dk > 0$. Thus, since $\eta$ is increasing and $\lim\limits_{k \to \infty} \eta(k) = 1$, we have $\eta(k) \le 1$. Consequently,
$$
\eps_{k+1} \le \frac{N}{(k+k_1+1)^s}.
$$
Therefore, by choosing $s$ and $N$ as in the statement, we obtain $\eps_k \le N/(k+k_1)^s$. This completes the proof of Theorem  \ref{LP23_thm8-2}. 
\end{proof}


\section{Proof of Theorem \ref{thm_MFG_regularity}} \label{sec_MFG_regularity}

We prove the existence of the smooth solutions of \eqref{MFG_eq}. For the proof, we follow the method of the proof of \cite[Theorem 1]{B21}, which is an application of the Leray--Schauder theorem (\cite[Theorem 11.6]{GT01} for example):

\begin{lemma}
\label{la:ls}
For a Banach space $X$, let $\mathcal{T} : X \times [0, 1] \to X$ be a continuous and compact mapping that satisfies:  
\begin{itemize}
\item[\textup{(LS1)}] There exists $\xi_0 \in X$ such that for any $\xi \in X$, one has $\mathcal{T}(\xi, 0) = \xi_0$; 
\item[\textup{(LS2)}] There exists a constant $C > 0$ such that for any $(\xi, \tau) \in X \times [0, 1]$ such that $\mathcal{T}(\xi, \tau) = \xi$, one has $\|\xi\|_X \le C$.
\end{itemize}
Then, there exists $\xi \in X$ such that $\mathcal{T}(\xi, 1) = \xi$. That is, $\mathcal{T}(\cdot, 1)$ has a fixed point.
\end{lemma}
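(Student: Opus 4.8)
The plan is to deduce the statement from the Leray--Schauder topological degree for compact perturbations of the identity, the a priori bound (LS2) serving precisely to single out a ball on which the relevant homotopy is admissible.

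First I would fix notation: let $C>0$ be the constant from (LS2), set $R:=C+1$, and write $B_R:=\{\xi\in X:\|\xi\|_X<R\}$ with boundary $\partial B_R$. By (LS1) one has $\mathcal{T}(\xi_0,0)=\xi_0$, so $(\xi_0,0)$ is a fixed point of $\mathcal{T}(\cdot,0)$ and therefore $\|\xi_0\|_X\le C<R$ by (LS2); in particular $\xi_0\in B_R$. More importantly, (LS2) guarantees that every $(\xi,\tau)\in\overline{B_R}\times[0,1]$ with $\mathcal{T}(\xi,\tau)=\xi$ satisfies $\|\xi\|_X\le C<R$, so that $\xi-\mathcal{T}(\xi,\tau)\ne 0$ for all $\xi\in\partial B_R$ and all $\tau\in[0,1]$.

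Next I would invoke the degree. Since $\mathcal{T}$ is continuous and compact and $\overline{B_R}\times[0,1]$ is bounded, its image $\mathcal{T}(\overline{B_R}\times[0,1])$ is precompact, so $(\xi,\tau)\mapsto\mathcal{T}(\xi,\tau)$ is an admissible compact homotopy on $\overline{B_R}$; combined with the boundary condition above, this makes the Leray--Schauder degree $\deg\big(I-\mathcal{T}(\cdot,\tau),B_R,0\big)$ well defined for each $\tau\in[0,1]$ and, by its homotopy invariance, independent of $\tau$. I would then evaluate it at $\tau=0$, where $\mathcal{T}(\cdot,0)\equiv\xi_0$ is constant (hence compact) and $\xi_0\in B_R$, so the normalization and translation properties give
\[
\deg\big(I-\mathcal{T}(\cdot,0),B_R,0\big)=\deg\big(I-\xi_0,B_R,0\big)=\deg(I,B_R,\xi_0)=1 .
\]
Hence $\deg\big(I-\mathcal{T}(\cdot,1),B_R,0\big)=1\ne 0$, and the solution property of the degree yields $\xi\in B_R$ with $\xi-\mathcal{T}(\xi,1)=0$, i.e. $\mathcal{T}(\cdot,1)$ has a fixed point.

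The single nontrivial ingredient here is the existence of a degree theory for maps of the form $I-(\text{compact})$ possessing normalization, homotopy invariance and the solution property; this is classical, constructed from the Brouwer degree by finite-dimensional Schauder approximation of the compact part, and is exactly what is developed in \cite[Chapter 11]{GT01}, so I would simply cite it. I expect this to be the crux: a degree-free route via the Schauder fixed point theorem applied to $\xi\mapsto\rho_R(\mathcal{T}(\xi,1))$, with $\rho_R$ the radial retraction of $X$ onto $\overline{B_R}$, does produce a fixed point of the retracted map, but excluding the boundary alternative $\|\mathcal{T}(\xi,1)\|_X>R$ would require the scalar homotopy structure $\mathcal{T}(\xi,\tau)=\tau\,\mathcal{T}(\xi,1)$ of Schaefer's theorem, which is not available for a general homotopy; so some continuation- or degree-type argument seems genuinely needed.
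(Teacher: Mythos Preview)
Your argument is correct and is the standard degree-theoretic proof of the Leray--Schauder continuation principle. Note, however, that the paper does not supply its own proof of this lemma: it is stated as a quotation of the classical Leray--Schauder theorem, with a reference to \cite[Theorem 11.6]{GT01}, consistent with the paper's convention that results taken directly from the literature are recorded as Lemmas. So there is nothing to compare your proof against; you have simply filled in what the paper elected to cite.
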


To apply this lemma to the MFG system \eqref{MFG_eq}, let $X := W^{1, 2, q}(Q) \times W^{1, 2, q}(Q)$ and define the mapping $\mathcal{T}(u, m, \tau):=(u_{\tau}, m_{\tau})$ for $(u, m, \tau) \in X \times [0, 1]$, where $(u_{\tau}, m_{\tau})$ denotes the unique solution to the following parametrized MFG system:
\begin{subequations}
\label{MFG_eq_tau}
\begin{alignat}{2}
-\del_t u_{\tau} - \nu \Delta{u}_{\tau} + \tau H(t, x, \nabla{u}) &= \tau f(t, x, \rho[m(t)]) &\quad& \mbox{in } Q, \label{MFG_eq_taua}\\ 
u_{\tau}(T,x) &= \tau g(x) && \mbox{in } \T^d,\label{MFG_eq_taub} \\
v_{\tau}(t, x) &= - \nabla_p H(t, x, \nabla{u}_{\tau}) &&\mbox{in }Q, \label{MFG_eq_tauc}\\
 \del_t m_{\tau} - \nu \Delta{m}_{\tau} + \tau \nabla \cdot {\left(\rho(m) v_{\tau}\right)} &= 0 & &\mbox{in } Q, \label{MFG_eq_taud}\\ 
 m_{\tau}(0,x) &= m_0(x) && \mbox{in } \T^d. \label{MFG_eq_taue}
\end{alignat}
\end{subequations}
Therein, $\rho : \mathcal{C}(Q) \to \mathcal{C}(0, T; \mathcal{D}_1(\T^d))$ is the projection operator by \cite{B23} defined as 
$$
\rho[m](t, x) := 1 + \frac{m_+(t, x) - M_+}{\max{\{1, M_+\}}} \qquad \left(M_+ := \int_{\T^d} m_+(t, y)\, dy, m_+ := \max{\{m, 0\}}\right).
$$
Actually, the mapping $\mathcal{T}$ is well-defined in view of Lemmas \ref{LP23_lem13}, \ref{LP23_lem15}, \ref{LP23_lem16}, and \ref{LP23_lem17}. 
As is stated in \cite[the proof of Theorem 1]{B23}, one knows:

\begin{itemize}
\item $\rho[m] = m$ for $m \in \mathcal{D}_1(\T^d)$;
\item $\|\rho[m_1] - \rho[m_2]\|_{L^{\infty}(\T^d)} \le L_{\rho} \|m_2 - m_1\|_{L^{\infty}(\T^d)}$ for $m_1,m_2 \in L^{\infty}(\T^d)$ with a constant $L_\rho>0$; 
\item For any $\alpha \in (0, 1)$, there is a constant $C_\alpha>0$ such that $\|\rho[m]\|_{\mathcal{C}^{\alpha}(\T^d)} \le C_\alpha \|m\|_{\mathcal{C}^{\alpha}(\T^d)}$ for $m \in \mathcal{C}^{\alpha}(\T^d)$.
\end{itemize}

\begin{proof}[Proof of Theorem \ref{thm_MFG_regularity}]
The proof consists of the following three parts:
\begin{enumerate}
\item[1)]  showing that the mapping 
$\mathcal{T}(\cdot, \cdot, 1)$ has a fixed point $(\bar{u}, \bar{m}) \in X$. Then, $(\bar{u}, \bar{m})$ is a solution of \eqref{MFG_eq}; 
\item[2)] proving the uniqueness of the solution $(\bar{u}, \bar{m})$;
\item[3)] showing that the solution $(\bar{u}, \bar{m})$ is a classical one in the sense that \eqref{eq:classical} hold.
\end{enumerate}
However, as is well known, the uniqueness result in 2) follows from the monotonicity condition \textup{(f–M)}; see, for instance, \cite{car12,ll06b,LL07}.
Moreover, 3) is obtained as a direct application of Remark \ref{B21_lem12} and Lemma \ref{B21_thm7} (the variable $t$ should be understood as $-t$). Thus, the only part that remains to be verified here is 1). 

We proceed by checking one by one that the assumptions of Lemma~\ref{la:ls} are satisfied.

The proof of continuity and compactness of $\mathcal{T}$ is the same as the proof of \cite[Theorem 1]{B23}, so we skip it. Since $\mathcal{T}(u, m, 0) = (0, 0)$, condition (LS1) immediately verified. 

Let us check condition (LS2).  
Given $\tau \in (0, 1]$, let $(u_{\tau}, m_{\tau}) \in X$ be a fixed point of $\mathcal{T}(\cdot, \cdot, \tau)$. Below $C$ denotes generic positive constants independent of $\tau$.
The proof relies on the following two key inequalities:
\begin{subequations}
\label{eq:bdd-duk21}
\begin{align}
\|u_{\tau}\|_{L^{\infty}(Q)} &\le C, \label{eq:bdd-duk20}\\
\|\nabla u_{\tau}\|_{L^{\infty}(Q)} &\le C. \label{eq:bdd-duk2}
\end{align}
\end{subequations}
Since the proof of \eqref{eq:bdd-duk20} is essentially identical to the first step of the proof of Proposition \ref{prop:du_k}, we omit the details.
We defer the proof of \eqref{eq:bdd-duk2} and proceed with the proof of the theorem.

In view of Lemma \ref{B21_thm4}, we infer $\|u_{\tau}\|_{W^{1, 2, q}(Q)} \le C$.
Moreover, since
$$
v_{\tau} = -\nabla_p H(\nabla u_{\tau}) = -\nabla u_{\tau} + h, \qquad 
\nabla \cdot v_{\tau} = - \Delta u_{\tau} + \nabla \cdot h,
$$
it follows that $\|v_{\tau}\|_{L^q(Q)}\le C$ and $\|\nabla \cdot v_{\tau}\|_{L^q(Q)}\le C$. Hence, applying Lemma \ref{B21_thm4} to \eqref{MFG_eq_tauc}, \eqref{MFG_eq_taud} and \eqref{MFG_eq_taue},  
we obtain $\|m_{\tau}\|_{W^{1, 2, q}(Q)} \le C$. Thus, condition (LS2) is verified. 
Therefore, we can apply Lemma \ref{la:ls}, to conclude that $\mathcal{T}(\cdot, \cdot, 1)$ has a fixed point in $X$.

\smallskip

What is left is to prove \eqref{eq:bdd-duk2}. 
We introduce $\phi_\tau$ by the Cole--Hopf transformation $\phi_{\tau} = \tau \exp{\left(- \frac{\tau u_{\tau}}{2 \nu}\right)}$,  then the function $\phi_{\tau}$ solves
\begin{equation} 
\label{parametrizedHJB}
\del_t \phi_{\tau} + \nu \Delta \phi_{\tau} + \tau h \cdot \nabla \phi_{\tau} = \frac{\tau^2}{2 \nu} f_{\tau} \phi_{\tau} \quad \mbox{in } Q, \quad
\phi_{\tau}(T,\cdot) = \tau \exp{\left(-\frac{\tau^2 g}{2 \nu}\right)} \quad  \mbox{on } \T^d.
\end{equation}
Additionally, since
\begin{align*}
\del_i \phi_{\tau}(T,\cdot) &= - \frac{\tau^3 \del_i g}{2 \nu} \exp{\left(-\frac{\tau^2 g}{2 \nu}\right)}, \\
\del_j \del_i \phi_{\tau}(T,\cdot) &= \left[- \frac{\tau^3 \del_j \del_i g}{2 \nu} + \frac{\tau^5 (\del_j g) (\del_i g)}{(2 \nu)^2}\right]\exp{\left(-\frac{\tau^2 g}{2 \nu}\right)},
\end{align*}
we have 
\begin{equation}
    \label{eq:LS-proof}
\|\phi_{\tau}(T,\cdot)\|_{W^{1,2,q}(Q)}\le \tau C,\qquad 
\|\Delta \phi_{\tau}(T,\cdot)\|_{L^{q}(Q)}\le \tau^3C. 
\end{equation}

The relationship between $u_\tau$ snd $\phi_\tau$ is written as
$$
\nabla u_{\tau} = -\frac{2 \nu}{\tau^2 \exp{\left(-\frac{\tau u_{\tau}}{2 \nu}\right)}} \nabla \phi_{\tau}.
$$
Therefore, it suffices to show that $\|\nabla \phi_{\tau}\|_{L^{\infty}(Q)} \le \tau^2C$. The proof of this is divided into three steps below. 

\medskip

\noindent \emph{Step 1. To show $\|\nabla \phi_{\tau}\|_{L^{\infty}(Q)} \le C$.} 
We derive rough estimations that reflect the dependence on $\tau$: 
$$
\|\tau h\|_{L^q(Q; \R^d)} \le C, \qquad 
\left\|\frac{\tau^2}{2 \nu} f_{\tau} \phi_{\tau}\right\|_{L^q(Q)} \le C,
$$
and 
$$
\|\phi_{\tau}(T,\cdot)\|_{W^{2-2/q, q}(\T^d)} \le C \|\phi_{\tau}(T,\cdot)\|_{W^{2, q}(\T^d)} \le C. 
$$
We apply Lemma \ref{B21_thm4} and obtain $\|\phi_{\tau}\|_{W^{1, 2, q}(Q)} \le C$ and hence $\|\nabla \phi_{\tau}\|_{L^{\infty}(Q)} \le C$.

\medskip

\noindent \emph{Step 2. To show $\|\nabla \phi_{\tau}\|_{L^{\infty}(Q)}\le \tau C$.} 
We already know \eqref{eq:LS-proof}. Using the result of Step 1, we have
$$
\left\|\frac{\tau^2}{2 \nu} f_{\tau} \phi_{\tau} - \tau h \cdot \nabla \phi_{\tau}\right\|_{L^q(Q)} \le \tau C.
$$
Application of Lemma \ref{B21_thm4} gives $\|\phi_{\tau}\|_{W^{1, 2, q}(Q)} \le \tau C$ and hence $\|\nabla \phi_{\tau}\|_{L^{\infty}(Q; \R^d)} \le \tau C$.

\medskip

\noindent \emph{Step 3. To show $\|\nabla \phi_{\tau}\|_{L^{\infty}(Q)} \le \tau^2 C$.} Set $\vhi_{\tau} := \phi_{\tau} - \phi_{\tau}(T,\cdot)$. Then $\vhi_{\tau}$ is a solution of 
$$
\del_t \vhi_{\tau} + \nu \Delta \vhi_{\tau} = F_{\tau} \quad \mbox{in } Q, \qquad \vhi_{\tau}(T,\cdot) = 0 \quad \mbox{on } \T^d,
$$
where
$$
F_{\tau} = \frac{\tau^2}{2 \nu} f_{\tau} \phi_{\tau} - \tau h \cdot \nabla \phi_{\tau} - \nu \left[-\frac{\tau^3}{2 \nu} \Delta g + \frac{\tau^5}{4 \nu^2} |\nabla g|^2\right]\exp{\left(-\frac{\tau^2 g}{2 \nu}\right)}.
$$
We have
$$
\left\|\frac{\tau^2}{2 \nu} f_{\tau} \phi_{\tau} - \tau h \cdot \nabla \phi_{\tau}\right\|_{L^q(Q)} \le \tau^2 C.
$$
Applying Lemma \ref{B21_thm4} again, we obtain $\|\vhi_{\tau}\|_{W^{1, 2, q}(Q)} \le \tau^2 C$. Thus,
\begin{align*}
\|\nabla \phi_{\tau}\|_{L^{\infty}(Q; \R^d)} &\le \|\nabla \phi_{\tau} - \nabla \phi_{\tau}(T,\cdot)\|_{L^{\infty}(Q; \R^d)} + \|\nabla \phi_{\tau}(T,\cdot)\|_{L^{\infty}(Q; \R^d)}\\
&\le C \|\vhi_{\tau}\|_{W^{1, 2, q}(Q)} + \left\|-\frac{\tau^3}{2 \nu} 
\exp{\left(-\frac{\tau^2 g}{2 \nu}\right)} \nabla g\right\|_{L^{\infty}(Q)}\le \tau^2 C, 
\end{align*}
which completes the proof. 
\end{proof}

\section{Numerical experiments}
\label{sec_numerical_experiment}

\subsection{Discretization method}
\label{sec_numerical_experiment_1}

In this section, we present results from numerical experiments to confirm the validity of our theoretical results. First, we explain the discretization method employed for the GCG method \eqref{GCG}. Recall that  $H$ is given as \textup{(H)}. 
Thanks to the Cole--Hopf transformation $\phi_k = \exp{(-u_k/2 \nu)}, \psi_k = m_k/\phi_k$, the GCG method \eqref{GCG} is equivalently written as
\begin{subequations} 
\label{CH_MFG_eq}
\begin{alignat}{2}
\gamma_k&=f(\cdot,\cdot,\bar{m}_k)&&  \mbox{in } Q, \\
\del_t \phi_k + \nu \Delta{\phi_k} + h\cdot \nabla \phi_k &= \frac{1}{2 \nu} \gamma_k \phi_k &\quad& \mbox{in } Q, \\
\phi_k(T,\cdot) &= \exp{\left(-\frac{g}{2 \nu}\right)}&& \mbox{on } \mathbb{T}^d, \\
\del_t \psi_k - \nu \Delta{\psi_k} + \nabla \cdot (h \psi_k) &= -\frac{1}{2 \nu} \gamma_k \psi_k && \mbox{in } Q, \\
\psi_k(0,\cdot) &= \frac{m_0}{\phi_k(0)} && \mbox{on } \T^d.
\end{alignat}
and
\begin{equation}
m_k=\phi_k\psi_k,\quad 
\bar{m}_{k+1}=(1-\delta_k) \bar{m}_k+\delta_k m_k.
\end{equation}
\end{subequations}
We then solve \eqref{CH_MFG_eq} using the finite difference method proposed in \cite{I23}. 
We evaluate the integrals arising in the exploitability $\sigma_k$ by the rectangle formula (trapezoid formula)
$$
\int_{x_0}^{x_n} u(x)\, dx \approx \frac{1}{n} \sum_{i=0}^{n-1} u(x_i).
$$

We then describe how we implement the step-size selection methods (S1), (S3), (S3), and (S4). 
In the QAG step-size (S1), we set $c = 0.25, \tau = 0.75$. 
For the optimal step-size (S2), following \cite{LP23}, we make use of the golden-section method as is given in Algorithm \ref{alg_goldensect}. 
It is effective for optimizing one-dimensional convex functions and does not require the differentiability of objective functions for solving the optimization problem \eqref{optimize_J(delta)}. 
In fact, this is an algorithm that narrows the search range $[a, d]$, and when the size of the search range becomes smaller than a tolerance $\kappa$, then $\bar{\delta}$ at that point is taken as the approximate solution to \eqref{optimize_J(delta)}. Although we omit further details, as can be seen from Algorithm \ref{alg_goldensect}, the computation of $\mathcal{J}(\bar{m}_k^{\delta}, \bar{w}_k^{\delta})$ is required four times at each iteration. Since the computation of $\mathcal{J}(\bar{m}_k^{\delta}, \bar{w}_k^{\delta})$ requires some numerical integrations, the computational cost increases significantly as we make the tolerance $\kappa$ smaller. In order to verify this, we examine two cases of the tolerance: $\kappa = 10^{-5}$ and $\kappa = 10^{-15}$. The exploitability-based step-size (S3) is straightforward. 
Recall that (S1), (S2), and (S3) are called the adaptive step-sizes. 
For the predefined step-size (S4), we consider the case where $\delta_k = k_0/(k+k_0)$, where $k_0 \ge 1$ is a constant. As mentioned earlier, the case $k_0 = 1$ implies the fictitious-play iterative method $\left(\bar{m}_k = \sum_{j=0}^{k-1} (1/k)m_j\right)$. 

\subsection{Computation time and number of iterations}
\label{sec_numerical_experiment_2}

\begin{example}
\label{ex:1}
Considering the case $d=2$, we set the parameters as follows:
\begin{align*}
\text{$T, \nu$ and Hamiltonian}: & \qquad T = 0.25, \qquad \nu = 0.01, \qquad H(t, x, y, p) = \frac{1}{2}|p|^2,\\
\text{terminal condition}: & \qquad g(x, y) = - \frac{1}{4 \pi} \left[\cos{(2 \pi x)} + \cos{(2 \pi y)}\right],\\
\text{initial condition}: & \qquad m_0(x, y) = \frac{1}{2 \pi \sigma^2} \exp{\left(-\frac{\left|\binom{x}{y} - \binom{1/2}{1/2}\right|^2}{2 \sigma^2}\right)} \quad (\sigma = 0.2),\\
\text{coupling term}: & \qquad f(x, y, m) = \left|\binom{x}{y} - \binom{1/2}{1/2}\right|^2 + 2 \min{\{m(x, y), 5\}}.
\end{align*}
At time $t = 0$, the density distribution follows the normal distribution with mean $(1/2, 1/2)$ and variance $\sigma^2$. At time $t = T$, the control input is $\displaystyle{-\nabla_p H (T, x, y, \nabla g) = - \frac{1}{2} \binom{\sin{(2 \pi x)}}{\sin{(2 \pi y)}}}$; thus the density distribution evolves in such a way that it disperses toward the four corner of $[0, 1]^2$. Regarding the coupling term, the first term $\left|(x, y) - (1/2, 1/2)\right|^2$ plays a role in concentrating the density distribution at $(1/2, 1/2)$. However, under this setting, this effect is not clearly visible because the density distribution is already concentrated at $(1/2, 1/2)$ at time $t = 0$. Also, the second term $\min{\{m(x, y), 5\}}$ plays a role in mitigating the congestion. Note that we impose the upper bound to guarantee the uniform boundedness of $f$.
\end{example}

In the setting of Example \ref{ex:1}, the evolution of the density distribution is shown in Figure \ref{figure1}. Also, Figure \ref{figure2} shows the control input. (Since the control remains almost unchanged due to the short time scale, we plot it only at time $t = T$.)

\begin{figure}[ht]
\centering
{\includegraphics[height=8cm]{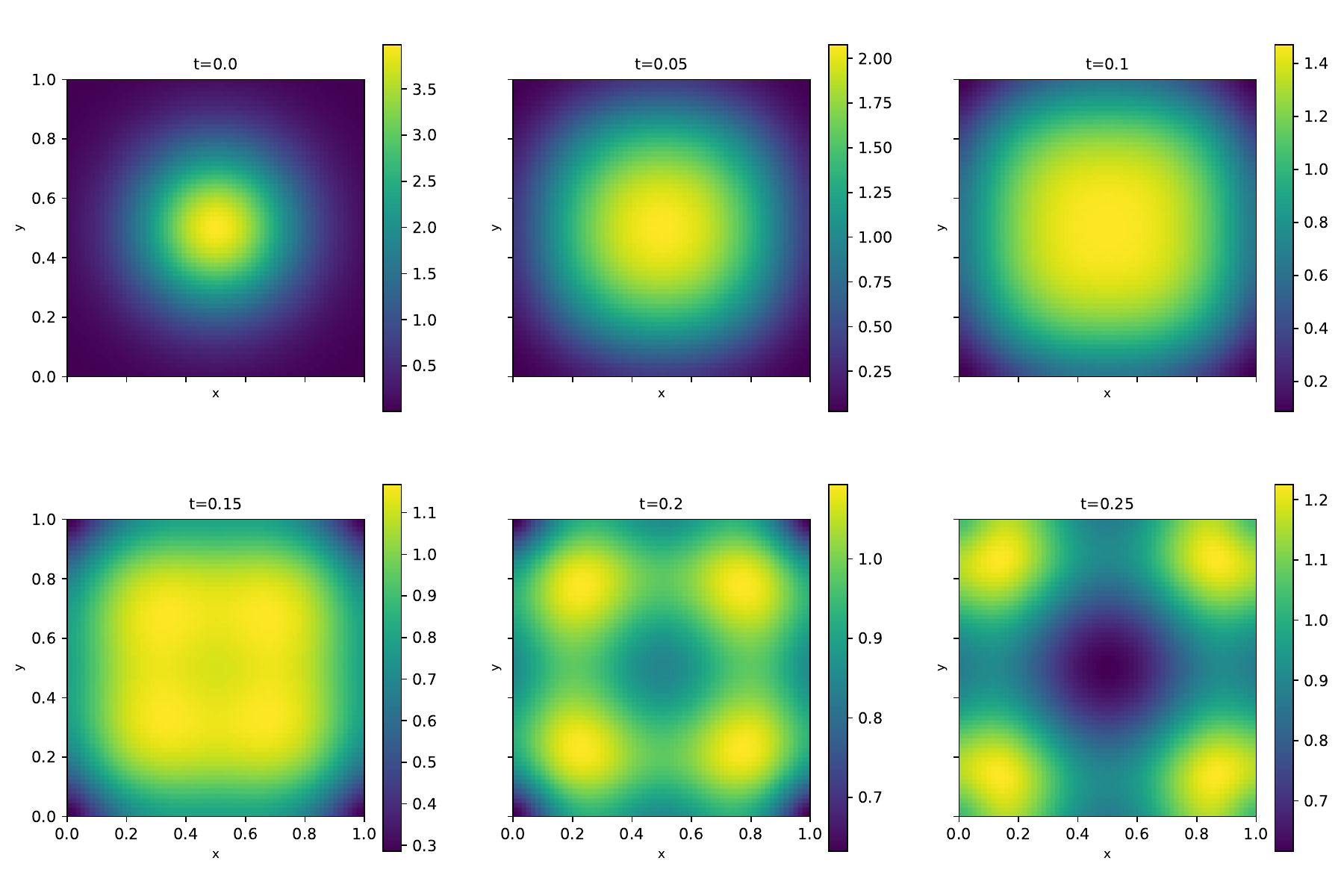}}
\caption{Density distribution $m$}
\label{figure1}
\end{figure}

\begin{figure}[ht]
\centering
{\includegraphics[height=5cm]{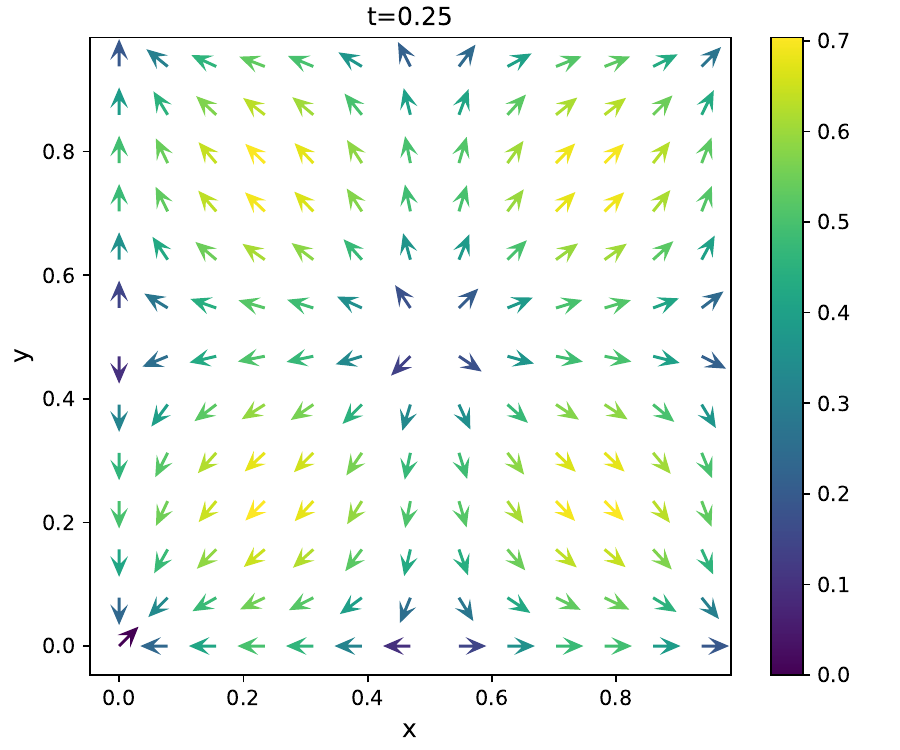}}
\caption{Vector field of the control input at time $t = T$ (the scale represents the norm)}
\label{figure2}
\end{figure}

We compare the computation time and the number of iterations needed for convergence.
We set $N_t = N_x = 40$. Table \ref{table1} shows the computation time and the number of iterations required to achieve $\sigma_k < 10^{-5}$ or the number of iteration reaches $1000$.  (All computations were performed in Python on a laptop equipped with an AMD Ryzen AI 9 365 processor (10 cores) and 32 GB of RAM.) However, since the non-negativity of the numerical approximation of $\sigma_k$ is not guaranteed, the iteration is terminated as soon as $\sigma_k < 0$ holds.

\begin{algorithm}[t]
 \caption{Golden-section method}
 \begin{algorithmic}[1]
 \State Choose a tolerance $\kappa \ll 1$. Set $a = 0, d = 1$ and $\vhi = (1+\sqrt{5})/2$. 
 \While {$d-a > \kappa$}
 \State Set $(b, c) = \left(d-(d-a)/\vhi, a+(d-a)/\vhi \right)$ and find
$$
\bar{\delta} := \argmin{\delta \in \{a, b, c, d\}}{\mathcal{J}(\bar{m}_k^{\delta}, \bar{w}_k^{\delta})}.
$$
 \If {$\bar{\delta} = a$ ($b, c, d$)}
 \State Set $d=b$ ($d=c, a = b, a = c$). 
 \EndIf
 \EndWhile
\end{algorithmic}
\label{alg_goldensect}
\end{algorithm}

\begin{table}[t]
\caption{Computation time and number of iterations}
\centering
\label{table1}
\begin{tabular}{|c|c|r|r|} \hline
 \multicolumn{4}{|c|}{Adaptive stepsizes} \\ \hline
 \multicolumn{2}{|c|}{$\delta_k$} & computation time [s] & number of iterations \\ \hline
 \multicolumn{2}{|c|}{Optimal ($\kappa = 10^{-5}$)} & 56.0194 & 63\\ \hline
 \multicolumn{2}{|c|}{Optimal ($\kappa = 10^{-15}$)} & 111.2467 & 63\\ \hline
 \multicolumn{2}{|c|}{QAG} & 30.6666 & 78\\ \hline
 \multicolumn{2}{|c|}{Exploitability-based} & 25.0248 & 73\\ \hline
 \end{tabular}
\begin{tabular}{|c|c|r|r|} \hline
 \multicolumn{4}{|c|}{predefined stepsizes} \\ \hline
 \multicolumn{2}{|c|}{$\delta_k$} & computation time [s] & number of iterations \\ \hline
 \multirow{4}{*}{$\delta_k = \dfrac{k_0}{k+k_0}$} & $k_0=1$ & 342.9919 & 1000\\ \cline{2-4}
 & $k_0=5$ & 38.8044 & 113\\ \cline{2-4}
 & $k_0=10$ & 32.1437 & 94\\ \cline{2-4}
 & $k_0=100$ & 116.6502 & 341\\ \hline
 \end{tabular}
\end{table}

From Table \ref{table1}, we observe the expected result that adaptive step sizes require fewer iterations to achieve the criterion $\sigma_k<10^{-5}$ compared with predefined step-sizes. We also note that the optimal step-sizes with $\kappa=10^{-15}$ yield the smallest number of iterations, although the corresponding computation time becomes significantly longer.

By examining the behavior of the exploitability and the step-sizes (left panels of Figures \ref{figure3} and \ref{figure4}), we see that the two cases $\kappa=10^{-15}$ and $\kappa=10^{-5}$ behave almost identically up to around the 60th iteration.
When the iteration is continued beyond the criterion, numerical errors start to accumulate, and eventually the iteration progresses further in the case $\kappa=10^{-5}$ than in the case $\kappa=10^{-15}$.
However, this should be regarded as a coincidental success of the iteration rather than a robust phenomenon. In general, a smaller tolerance $\kappa$ is expected to lead to more stable computations.

In the case of predefined step sizes, the convergence tends to become faster as $k_0$ increases.
However, when $k_0$ becomes extremely large, the convergence suddenly deteriorates (see also Remark \ref{rem_thm8-2}).
To investigate this behavior in more detail, we examine the decay of the exploitability.
The right panel of Figure \ref{figure3} shows that the larger $k_0$ becomes, the slower the decay is in the early iterations.
When $k_0$ is sufficiently large, we have $\delta_k\approx 1$ for small $k$. If $\delta_k$ remains close to $1$, the iteration does not proceed successfully.

\begin{figure}[ht]
\begin{minipage}{0.5\hsize}
\centering
{\includegraphics[height=4.5cm]{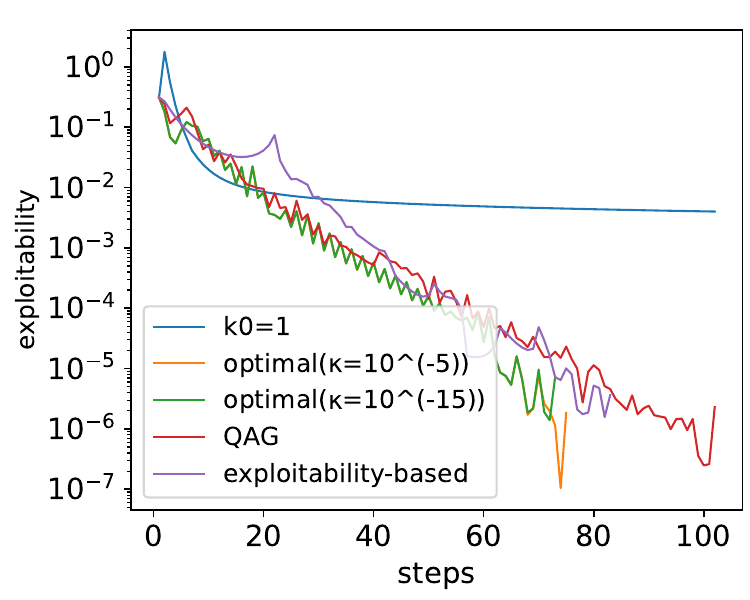}}
\end{minipage}
\begin{minipage}{0.5\hsize}
\centering
{\includegraphics[height=4.5cm]{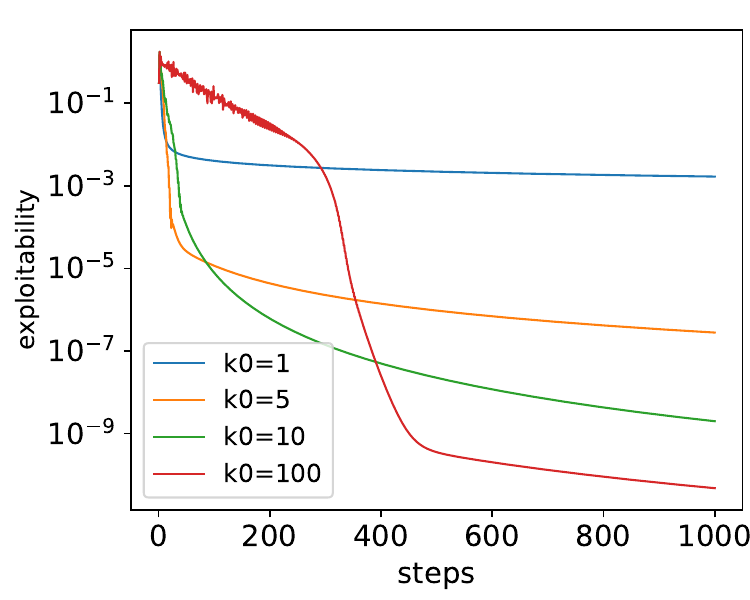}}
\end{minipage}
\caption{Adaptive stepsizes (left figure) and predefined stepsizes (right figure)}
\label{figure3}
\end{figure}

\begin{figure}[ht]
\begin{minipage}{0.5\hsize}
\centering
{\includegraphics[height=4.5cm]{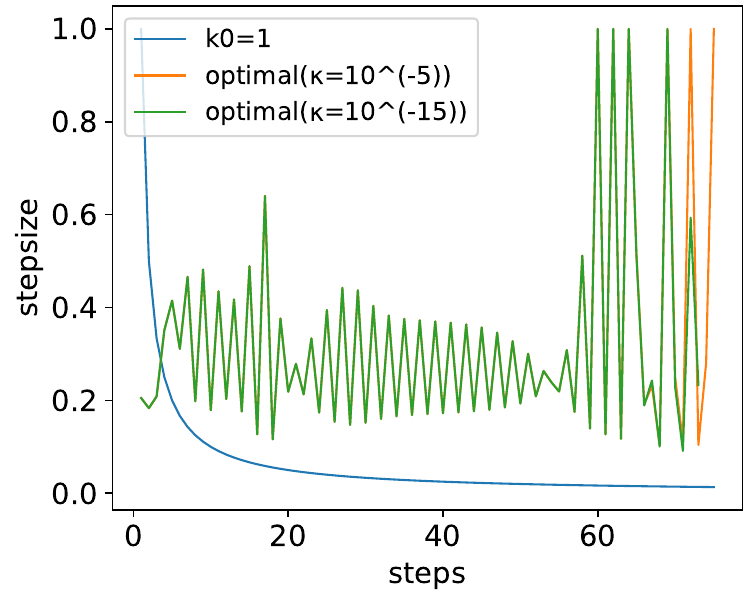}}
\end{minipage}
\begin{minipage}{0.5\hsize}
\centering
{\includegraphics[height=4.5cm]{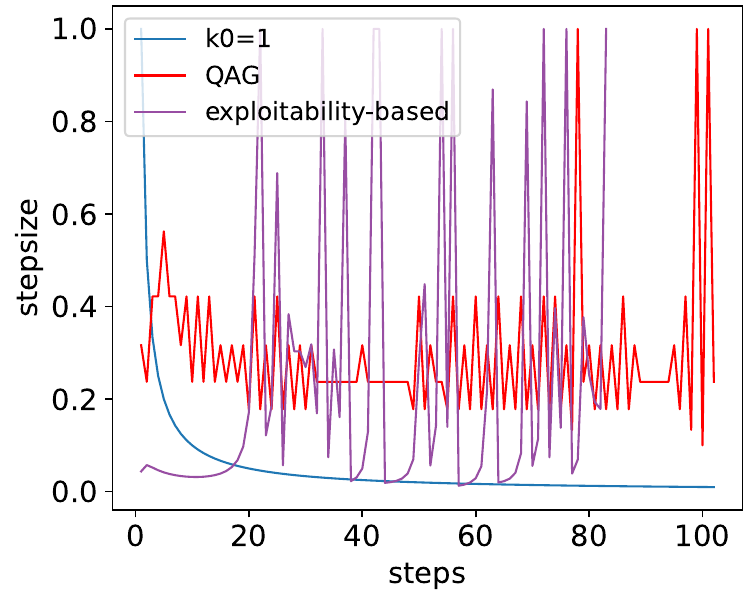}}
\end{minipage}
\caption{Step-sizes $\delta_k$}
\label{figure4}
\end{figure}

\subsection{Convergence rates of iterartions}
\label{sec_numerical_experiment_3}

We present numerical experiments to verify the explicit convergence rates of the GCG method stated in Theorems \ref{LP23_thm7} and \ref{LP23_thm8-1} and \ref{LP23_thm8-2}. However, there is a significant issue. While we have mathematically established convergence rate estimates with respect to the number of iterations, we have not studied the discretization error at all.
In fact, we have succeeded in deriving explicit estimates for both the rate of convergence of iterations and the discretization error of the GCG method discretized by the finite difference method. This result will be reported in a forthcoming paper \cite{NS26}. 
In this paper, we restrict ourselves to a one-dimensional spatial setting and employ sufficiently fine meshes so that the influence of spatial and temporal discretization can be neglected. We thus focus solely on investigating the convergence speed of the GCG iteration through numerical experiments.

If $\eps_k$ is sufficiently small, we see from Theorem \ref{LP23_thm7} that 
$$
\frac{\|(\bar{m}_k, \bar{w}_k) - (\bar{m}, \bar{w})\|_{\ast}}{\sqrt{\eps_k}} \approx C,
$$
where 
$$
\|(\bar{m}_k, \bar{w}_k) - (\bar{m}, \bar{w})\|_{\ast} := \|\bar{m}_{k}-\bar{m}\|_{{L^2(0, T; L^{\infty}(\T^d))}} + \|\bar{w}_{k} - \bar{w}\|_{L^2(Q; \R^d)}
$$
Further, if $\eps_k$ is sufficiently small, Theorem \ref{LP23_thm8-1} suggests the following trend: 
$$
\frac{\eps_{k+1}}{\eps_k} \approx \lambda \in (0, 1).
$$
for the adaptive step-sizes. 

\begin{example}
\label{ex:2}
Considering the case $d=1$, and we set
\begin{align*}
\text{$T, \nu$ and Hamiltonian}: & \qquad T = 0.1, \qquad \nu = 0.01, \qquad H(t, x, p) = \frac{1}{2}|p|^2,\\
\text{terminal condition}: & \qquad g(x) = -\frac{1}{2 \pi} \cos{(2 \pi x)}, \\
\text{initial condition}: & \qquad m_0(x) = \frac{1}{\sqrt{2 \pi \sigma^2}} \exp{\left(-\frac{(x-1/2)^2}{2 \sigma^2}\right)} \quad (\sigma = 0.1),\\
\text{coupling term}: & \qquad f(x, m) = \left(x-\frac{1}{2}\right)^2 + 4 \min{\{m(x), 5\}}.
\end{align*}
This problem is similar to the one obtained by reducing the two-dimensional setting considered previously to a one-dimensional case.
The evolution of the density distribution also behaves similarly to that in the two-dimensional case (see Figure \ref{figure5}). 
\end{example}

\begin{figure}[ht]
\centering
{\includegraphics[height=4cm]{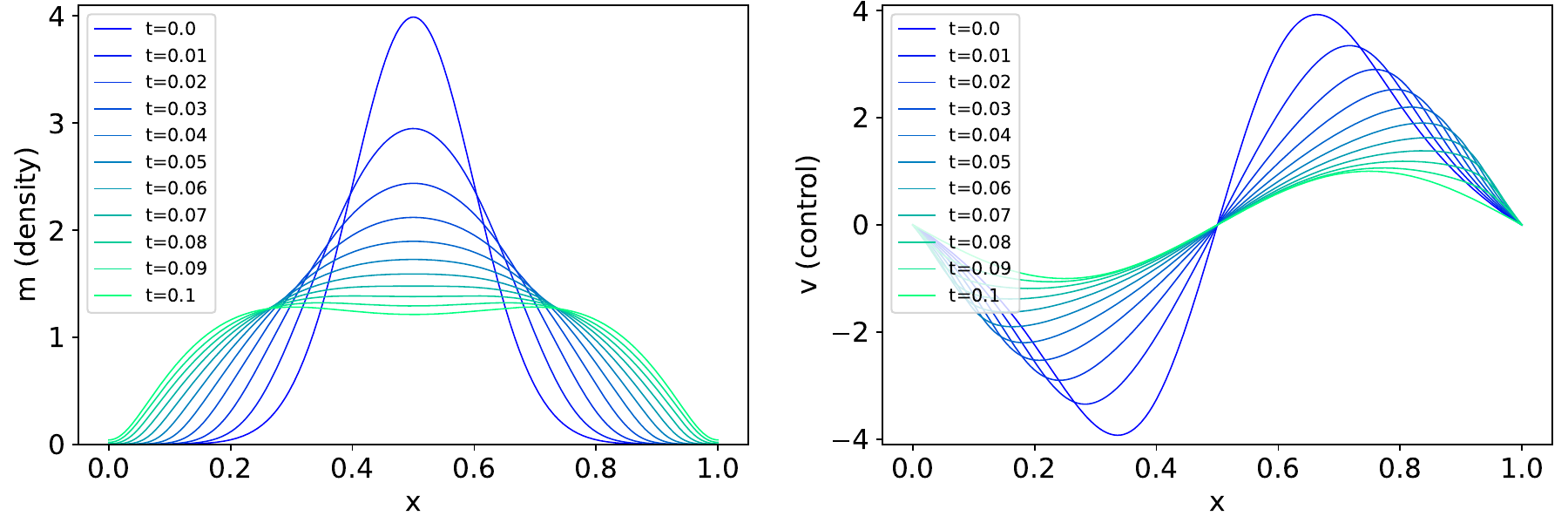}}
\vspace{-8pt}
\caption{Density distribution (left figure) and control input (right figure)}
\label{figure5}
\end{figure}

We set $N_t = 2000, N_x= 500$ and fix the tolerance of the golden-section method to $\kappa = 10^{-15}$. Although the computation of
$$
\|(\bar{m}_k, \bar{w}_k) - (\bar{m}, \bar{w})\|_{\ast}, \qquad \eps_k = \mathcal{J}(\bar{m}_k, \bar{w}_k) - \mathcal{J}(\bar{m}, \bar{w})
$$
requires the exact solution $(\bar{m}, \bar{w})$, it is not available in closed form. Therefore, we use as a reference solution $(\bar{m}, \bar{w})$ the result obtained with the step-size $\delta_k=10/(k+10)$ and a sufficiently large number of iterations.

\begin{figure}[ht]
\begin{minipage}{0.5\hsize}
\centering
{\includegraphics[height=4.5cm]{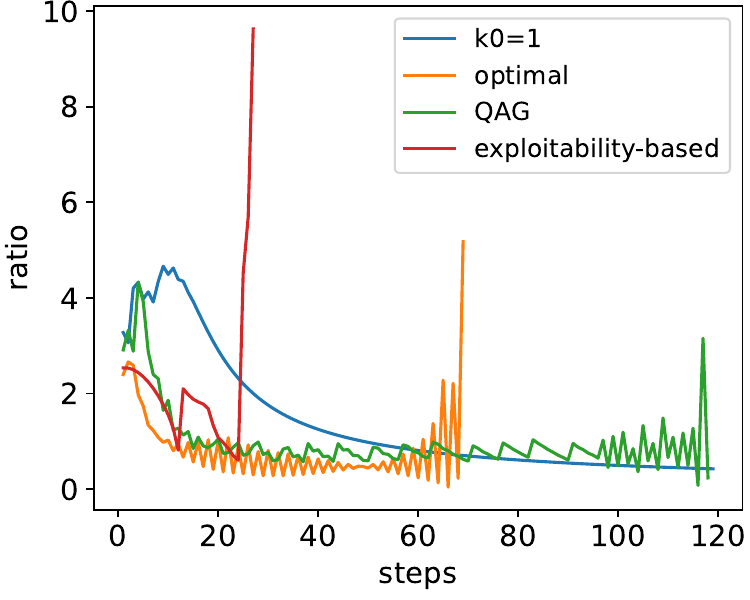}}
\caption{$\|(\bar{M}_k, \bar{W}_k) - (\bar{m}, \bar{w})\|_{\ast}/\sqrt{\eps_k}$}
\label{figure6}
\end{minipage}
\begin{minipage}{0.5\hsize}
\centering
{\includegraphics[height=4.5cm]{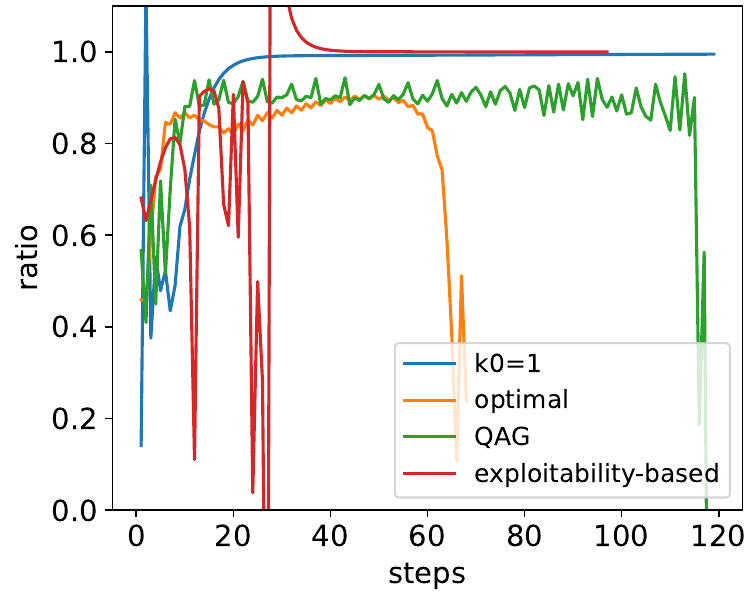}}
\caption{$\eps_{k+1}/\eps_k$}
\label{figure7}
\end{minipage}
\end{figure}

As shown in Figures \ref{figure6} and \ref{figure7}, the computation proceeds successfully for the predefined step-size $\delta_k = 1/(k+1)$, whereas it becomes unstable for the adaptive step-sizes due to inaccuracies in the numerical integration.
Nevertheless, as long as the numerical accuracy is maintained, the results behave roughly as expected.
In particular, Figure \ref{figure7} indicates that $\lambda \approx 0.9$ for both the optimal step-size rule and the QAG condition.

On the other hand, the computation appears particularly unstable when using exploitability-based step-sizes.
From Figures \ref{figure8} and \ref{figure9}, we observe that the exploitability continues to decrease, while the error stagnates after a certain point.
This suggests that the numerical solution converges to a state that differs from the reference solution.
Empirically, only in the case of exploitability-based step-sizes does the iteration sometimes proceed even though the computation is unsuccessful.
(For the other adaptive rules, the iteration stops immediately when the problem becomes ill-conditioned.)

The evolution of the step-sizes in Figure \ref{figure10} shows that they converge to zero.
Figure \ref{figure11} illustrates the decay of $\sigma_k$ and $D_k$, where $D_k$ is defined by \eqref{eq:s3-dk}. 
We observe that $D_k$ stops decreasing after some point, meaning that the approximation of $\sigma_k$ becomes significantly smaller than that of $D_k$. 
Consequently, the guess is hardly updated, and the iteration converges to an incorrect solution.

\begin{figure}[ht]
\begin{minipage}{0.5\hsize}
\centering
{\includegraphics[height=4.5cm]{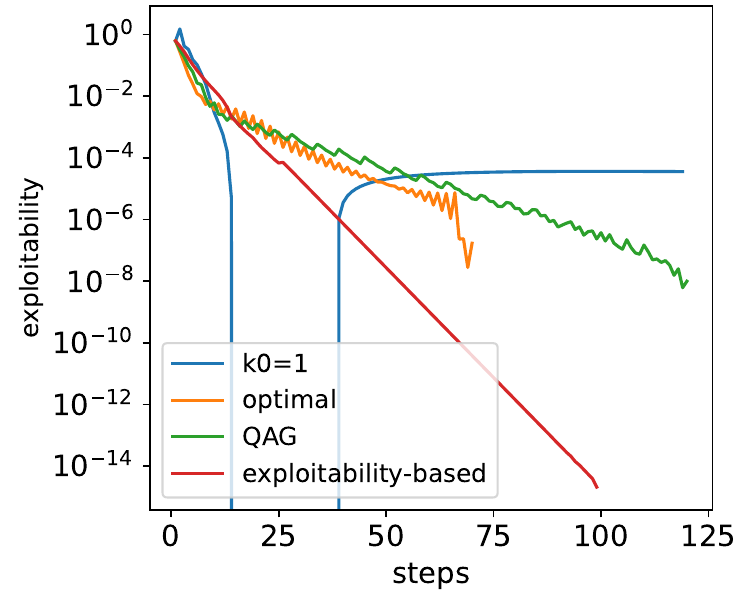}}
\caption{Exploitability $\sigma_k$}
\label{figure8}
\end{minipage}
\begin{minipage}{0.5\hsize}
\centering
{\includegraphics[height=4.5cm]{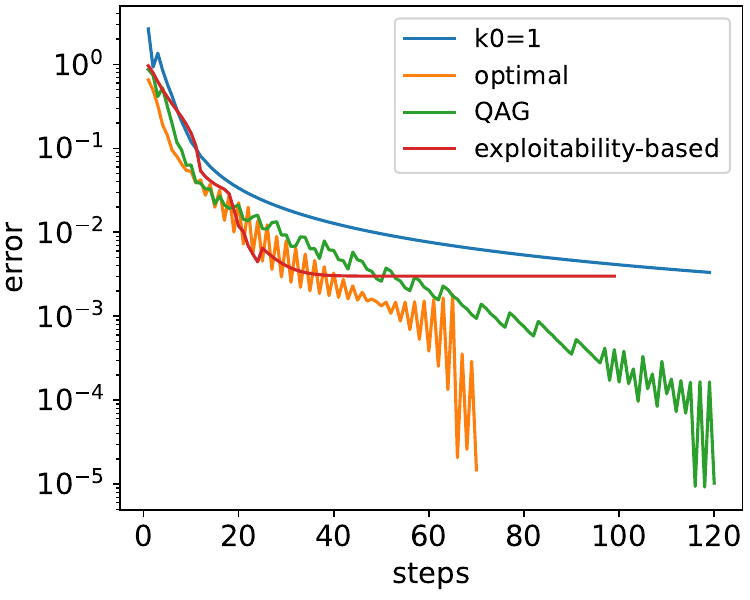}}
\caption{Error $\|(\bar{M}_k, \bar{W}_k) - (\bar{m}, \bar{w})\|_{\ast}$}
\label{figure9}
\end{minipage}
\end{figure}

\begin{figure}[ht]
\begin{minipage}{0.5\hsize}
\centering
\centering
{\includegraphics[height=4.5cm]{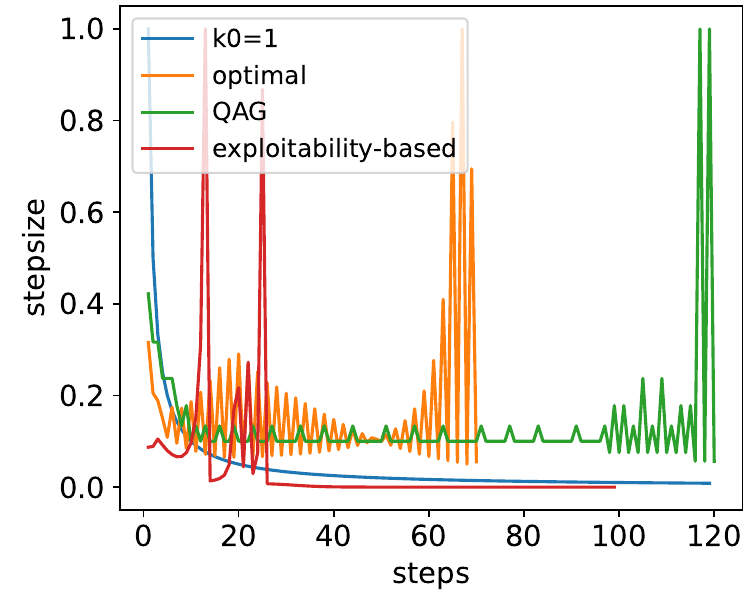}}
\caption{Step-sizes $\delta_k$}
\label{figure10}
\end{minipage}
\begin{minipage}{0.5\hsize}
\centering
{\includegraphics[height=4.5cm]{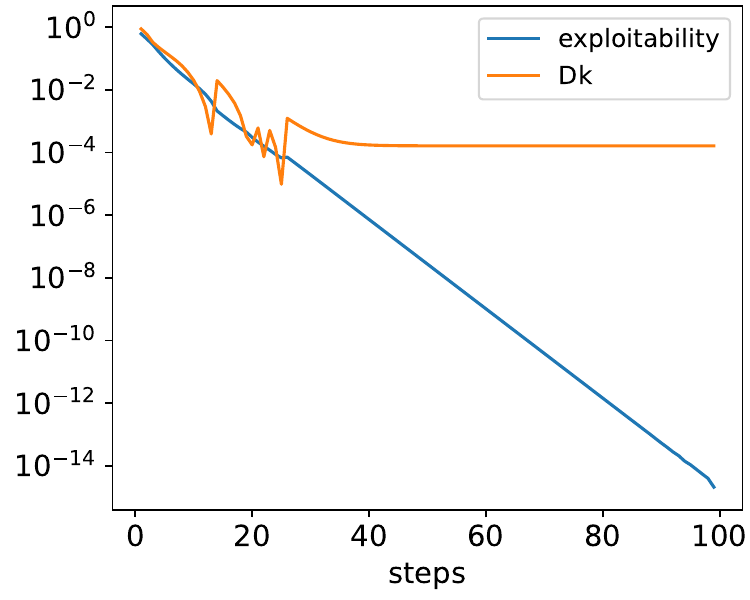}}
\caption{Exploitability $\sigma_k$ and $D_k$}
\label{figure11}
\end{minipage}
\end{figure}

\section{Concluding remarks}
\label{sec:cr}

\begin{enumerate}
\item \emph{Summary.} We have extended the convergence analysis of the GCG method to the MFG system with locally coupled interaction terms. The previous results in the literature were restricted to globally coupled terms, whereas the proposed framework accommodates congestion-type costs commonly used in numerical and practical models. We established explicit convergence rates for both adaptive and predefined step-size selections, although the decay becomes dimension-dependent due to Sobolev embedding constraints. Theoretical findings were validated through numerical experiments, and we additionally proved existence and uniqueness of smooth solutions under the same assumptions. 
    
\item \emph{Extension to general Hamiltonians.}
The major difficulty in treating local couplings stems from the necessity of uniform Lipschitz estimates for the value function $u$. To guarantee this, we restricted ourselves to a quadratic Hamiltonian with a convection effect. Removing this restriction is an important direction for future research. In particular, general smooth and convex Hamiltonians considered in prior GCG analyses remain out of reach because the Cole–Hopf transformation no longer applies. Developing new regularity arguments to ensure a priori gradient bounds for the Hamilton--Jacobi--Bellman will be essential for handling the general case and broadening the applicability of the theory.

\item \emph{Fully discrete schemes and error estimates.}
In this work, numerical experiments relied on a finite difference method of \cite{I23}, while discretization error analysis was deliberately excluded. An important next step is to construct a fully discrete GCG scheme within a systematic framework of approximations such as finite difference or finite element methods, and to derive corresponding error estimates. 
In particular, explicitly quantifying the overall convergence rate in terms of both the discretization error and the iteration error will be a key contribution for developing practically reliable solvers.
Progress on this topic will be reported elsewhere, including \cite{NS26}.

\item \emph{Remarks on the terminal cost and second coupling term.} 
In this paper, we assume that the terminal condition $g=g(t,x)$ is prescribed and do not include a terminal cost $g=g(t,x,m(T,\cdot))$. 
Although considering a terminal cost is often preferable from the viewpoint of optimal control, we avoid doing so here in order to keep the presentation from becoming overly technical.
Similarly, we do not incorporate the effect of the price function, i.e., the second coupling term, into our analysis.
However, we remark that the arguments developed in this paper can be adapted to the problem setting that includes these features without substantial difficulty.
\end{enumerate}

\section*{Acknowledgements}
We would like to thank Dr. Daisuke Inoue (Toyota Central R{\&}D Labs., Inc.) and Professor Takahito Kashiwabara (The University of Tokyo)
for their valuable advice and insightful discussions during the course of this research.
This work was partially supported by JSPS KAKENHI Grant Number 21H04431 (Grant-in-Aid for Scientific Research (A)). Nakamura was supported by the SPRING GX program of The University of Tokyo.

\bibliographystyle{plain}
\bibliography{Bibliography}

\section*{Statements \& Declarations}

\noindent \textbf{Funding:} This work was supported by JST SPRING (Grant Number JPMJSP2108) and partially supported by JSPS KAKENHI (Grant Number 21H04431, Grant-in-Aid for Scientific Research (A)).

\medskip

\noindent \textbf{Competing Interests:} The authors have no relevant financial or non-financial interests to disclose.

\medskip

\noindent \textbf{Author Contributions:} All authors contributed to the conception and design of the study. Haruka Nakamura wrote the first draft of the manuscript and Norikazu Saito contributed to subsequent revisions. All authors read and approved the final manuscript.

\end{document}